\documentclass[a4paper,10pt,fleqn]{article}
\usepackage{amsmath,makeidx,amsfonts,a4wide,amssymb,amsthm,longtable}
\newtheorem{theorem}{Theorem}

\numberwithin{equation}{section}
\numberwithin{lemma}{section}
\numberwithin{theorem}{section}
\numberwithin{corollary}{section}

\setlength{\topmargin}{1cm}
\setlength{\headheight}{0pt}
\setlength{\headsep}{0pt}
\setlength{\topskip}{0cm}
\setlength{\evensidemargin}{0pt}
\setlength{\oddsidemargin}{0pt}

\textheight 22cm
\textwidth 15cm
\allowdisplaybreaks

\begin{document}
\title{A study on Horn matrix functions and its confluent cases}
\author{Ravi Dwivedi\footnote{Department of Science (Mathematics), Govt. Naveen College Bhairamgarh, Bijapur (CG) 494450, India. \newline Email: dwivedir999@gmail.com} 
}
\maketitle
\begin{abstract}
In this paper, we give the matrix version of Horn's hypergeometric function and its confluent cases. We also discuss the regions of convergence, the system of matrix differential equations of bilateral type, differential formulae and infinite summation formulae satisfied by these hypergeometric matrix functions. We also give the certain integral representation of these hypergeometric matrix functions. The study of these 23 matrix functions leads to completing the matrix generalization of Horn's list of 34 hypergeometric series.

\medskip
\noindent\textbf{Keywords}: Horn's hypergeometric functions, Matrix functional calculus.

\medskip
\noindent\textbf{AMS Subject Classification}: 15A15, 33C65.
\end{abstract}

\section{Introduction}\label{s:i}
In general, Horn defined that a double power series $\sum_{m,n = 0}^{\infty} \mathcal{A}_{m,n} x^m \, y^n$ is called a hypergeometric series if the two quotients $\frac{\mathcal{A}_{m+1,n}}{\mathcal{A}_{m,n}} = f(m,n)$ and $\frac{\mathcal{A}_{m,n+1}}{\mathcal{A}_{m,n}} = g(m,n)$ are rational functions of $m$ and $n$. Horn puts $f(m,n) = \frac{F(m,n)}{F'(m,n)}$ and $g(m,n) = \frac{G(m,n)}{G'(m,n)}$, where $F$, $F'$, $G$, $G'$ are polynomials in $m$, $n$ of respective degrees $p$, $p'$, $q$, $q'$. The highest of the four numbers $p$, $p'$, $q$, $q'$ is the order of the hypergeometric series. Horn investigated a complete list of  $34$ distinct convergent series of order $2$, out of these 34 distinct series 14 are complete series for which $p = p' = q = q' = 2$ and rest of the 20 series are confluent cases of the 14 complete series, for more detail see \cite{emo}, \cite{jh}, \cite{ph}, \cite{sk}.  Recently,  Brychkova and Savischenko 
studied various properties of Horn's functions and confluent form of Horn's functions. The intimate relationship between Horn functions and some fundamental equations of mathematical physics shows the importance of these special functions. Horn functions arise by partial separation of a canonical system of partial differential equations and by some consequence it's shown that these functions appear as solution of the $4$-variable wave equation, $3$-variable wave and heat equations and $2$-variable Helmholtz equation, \cite{kmm1}, \cite{kmm2}, \cite{kmm3}.

 The matrix generalization of special function is being initiated by J\'odar and Cort\'es and studied the gamma matrix function, beta matrix function and Gauss hypergeometric matrix function \cite{jjc98a}, \cite{jjc98b}. The matrix analogue of Appell functions and Lauricella functions of several variable have been studied in \cite{al}, \cite{RD}, \cite{RD2}, \cite{RD3}.  The confluent cases of Appell matrix functions are  given in \cite{cdss}. In this paper, we study the matrix analogue of remaining Horn functions and their confluent cases. We give the regions of convergence, differential formulae, infinite summation formulae and system of bilateral type matrix differential equations obeyed by these matrix functions.  
The section-wise treatment is as follows. 

In Section~2, we list the basic definitions and results that are needed in the sequel. In Section~3, we define the Horn matrix function and Horn confluent matrix functions. We also give here the regions of convergence and system of bilateral type matrix differential equations obeyed by these matrix functions. We also give here the certain integral representations of these hypergeometric matrix functions.  In Section~4, We obtain the differential formulae satisfied by Horn matrix functions and Horn confluent matrix functions. Finally, in Section~5, the infinite summation formulae for Horn matrix functions and their confluent matrix functions are presented.
	
\section{Preliminaries} 
Let $\mathbb{C}^{r \times r}$ denote the vector space of all $r$-square matrices with complex entries.	For  $A\in \mathbb{C}^{r\times r}$, $\sigma(A)$ is the spectrum of A. The spectral abscissa of $A$ is given by $\alpha(A) = \max\{\,\Re(z) \mid z\in \sigma(A)\,\}$,  where $\Re(z)$  denotes the real  part of a complex number $z$. If $\beta(A) = \min\{\,\Re(z) \mid z \in \sigma(A)\,\}$, then $\beta(A) = -\alpha(-A)$. A square matrix $A$ is said to be positive stable if  $\beta(A) >0$. The 2-norm of $A$ is denoted by $\Vert A\Vert$ and defined by 
\begin{equation}
\Vert A\Vert = \max_{x\neq 0} \frac{\Vert {Ax}\Vert_2}{ \Vert x \Vert_2} = \max\{ \,\sqrt{\lambda} \mid  \lambda\in \sigma(A^*A)\,\},\label{c1eq1.1}
\end{equation}
where for any vector $x$ in the $r$-dimensional complex space,   $\Vert{x}\Vert_2 = (x^*x)^\frac{1}{2}$ is the Euclidean norm of $x$ and $A^*$ denotes the transposed conjugate of $A$. If $f(z)$ and $g(z)$ are holomorphic functions of the complex variable $z$, which are defined in an open set $\Omega$ of the complex plane, and $A$ is a matrix in $\mathbb{C}^{r \times r}$ with  $\sigma(A)\subset\Omega$,  then from the properties of the matrix functional calculus \cite{ds57}, it follows that 
\begin{equation}
f(A)g(A) = g(A)f(A).
\end{equation}
Furthermore, if $B\in \mathbb{C}^{r \times r}$ is a matrix for which $\sigma(B)\subset\Omega$, and if $AB = BA$, then
\begin{equation}
f(A)g(B) = g(B)f(A).
\end{equation}
The reciprocal gamma function $\Gamma^{-1}(z)=1/\Gamma(z)$ is an entire function of the complex variable $z$. The image of $\Gamma^{-1}(z)$ acting on $A$, denoted by $\Gamma^{-1}(A)$, is a well defined matrix. If $A+nI$ is invertible for all integers $n\geq 0$, then the reciprocal gamma function is defined as \cite{jjc98a}
	  \begin{equation}
	 \Gamma^{-1}(A)= A(A+I)\dots (A+(n-1)I)\Gamma^{-1}(A+nI) , \  n\geq 1.\label{eq.07}
	  \end{equation}
	  The Pochhammer symbol $(z)_n$, $z\in \mathbb{C}$, is defined as
	  \begin{equation}
	  (z)_n = \begin{cases}
	  1, & \text{if $n = 0$}\\
	  z(z+1) \dots (z+n-1), & \text{if $n\geq 1$}.
	  \end{cases}
	  \end{equation}
	  By  application of the matrix functional calculus, the Pochhammer symbol  for  $A\in \mathbb{C}^{r\times r}$ is given by
	  \begin{equation}
	  (A)_n = \begin{cases}
	  I, & \text{if $n = 0$}\\
	  A(A+I) \dots (A+(n-1)I), & \text{if $n\geq 1$}.
	  \end{cases}\label{c1eq.09}
	  \end{equation}
	  This gives
	  	   \begin{equation}
	   (A)_n = \Gamma^{-1}(A) \ \Gamma (A+nI), \qquad n\geq 1.\label{c1eq.010}
	   \end{equation} 

If $A\in \mathbb{C}^{r \times r}$ is such that $\Re(z)>0$ for all eigenvalues $z$ of $A$, then $\Gamma(A)$ can be expressed as \cite{jjc98a}
\[ \Gamma(A) = \int_{0}^{\infty} e^{-t} \, t^{A-I}\, dt.
\]

\section{Horn matrix functions}
Horn list of fourteen complete series contains four Appell series of two variables and remaining ten $G$ and $H$-hypergeometric series. The matrix version of Appell series is given in \cite{al}, \cite{RD}, \cite{RD3}. We give here the definition of remaining ten $G$ and $H$-hypergeometric with matrix parameters. Let $A$, $A'$, $B$, $B'$, $C$, $C'$ and $C''$ be matrices in $\mathbb{C}^{r\times r}$ such that $C+kI$, $C'+kI$ and $C''+kI$ are invertible for all integers $k\ge 0$. Then, we define
\begin{align}
G_1(A, B, B'; x, y) &= \sum_{m,n =0}^{\infty} (A)_{m+n} (B)_{n-m} (B')_{m-n} \frac{x^m \, y^n}{m!\, n!}, \label{3.1}
\end{align}
\begin{align}
G_2(A, A', B, B'; x, y) &= \sum_{m,n =0}^{\infty} (A)_{m} (A')_{n} (B)_{n-m} (B')_{m-n} \frac{x^m \, y^n}{m!\, n!}, \label{3.2}
\end{align}
\begin{align}
G_3(A, A'; x, y) &= \sum_{m,n =0}^{\infty} (A)_{2n-m} (A')_{2m-n} \frac{x^m \, y^n}{m!\, n!}, \label{3.3}
\end{align}
\begin{align}
H_1(A, B, C, C'; x, y) &= \sum_{m,n =0}^{\infty} (A)_{m-n} (B)_{m+n} (C)_{n} (C')_{m}^{-1} \frac{x^m \, y^n}{m!\, n!}, \label{3.4}
\end{align}
\begin{align}
H_2(A, B, C, C'; C'' x, y) &= \sum_{m,n =0}^{\infty} (A)_{m-n} (B)_{m} (C)_{n} (C')_{n} (C'')_{m}^{-1} \frac{x^m \, y^n}{m!\, n!}, \label{3.5}
\end{align}
\begin{align}
H_3(A, B; C; x, y) &= \sum_{m,n =0}^{\infty} (A)_{2m+n} (B)_{n} (C)_{m+n}^{-1} \frac{x^m \, y^n}{m!\, n!}, \label{3.6}
\\[5pt]
H_4(A, B; C, C'; x, y) &= \sum_{m,n =0}^{\infty} (A)_{2m+n} (B)_{n} (C)_{m}^{-1} (C')_{n}^{-1} \frac{x^m \, y^n}{m!\, n!}, \label{3.7}
\end{align}
\begin{align}
H_5(A, B; C; x, y) &= \sum_{m,n =0}^{\infty} (A)_{2m+n} (B)_{n-m} (C)_{n}^{-1} \frac{x^m \, y^n}{m!\, n!}, \label{3.8}
\end{align}
\begin{align}
H_6(A, B; C; x, y) &= \sum_{m,n =0}^{\infty} (A)_{2m-n} (B)_{n-m} (C)_{n} \frac{x^m \, y^n}{m!\, n!}, \label{3.9}
\end{align}
\begin{align}
H_7(A, B; C, C'; x, y) &= \sum_{m,n =0}^{\infty} (A)_{2m-n} (B)_{n} (C)_{n} (C')_{m}^{-1} \frac{x^m \, y^n}{m!\, n!}. \label{3.10}
\end{align}
There are twenty confluent functions of two variable hypergeometric functions among of them seven are confluent cases of Appell functions known as Humbert functions. The matrix analogue of these seven Humbert functions  have been studied in, \cite{cdss}. The remaining 13 confluent hypergeometric functions, obtained as limiting cases of Horn functions, has been listed fairly in \cite{sk}. Now, we define the matrix analogue of these 13 confluent hypergeometric functions. 

\begin{align}
	\Gamma_1 (A, B, B'; x, y) = \sum_{m, n \ge 0} (A)_m (B)_{n-m} (B')_{m-n} \frac{x^m y^n}{m! \, n!};\label{5.1}
\end{align}
\begin{align}
	\Gamma_2 (B, B'; x, y) = \sum_{m, n \ge 0} (B)_{n-m} (B')_{m-n} \frac{x^m y^n}{m! \, n!};\label{5.2}
\end{align}
\begin{align}
	\mathcal{H}_1 (A, B, C; x, y) = \sum_{m, n \ge 0} (A)_{m-n} (B)_{m+n} (C)_{m}^{-1} \frac{x^m y^n}{m! \, n!};\label{5.3}
\end{align}
\begin{align}
	\mathcal{H}_2 (A, B, B'; C; x, y) = \sum_{m, n \ge 0} (A)_{m-n} (B)_{m} (B')_{n} (C)_m^{-1} \frac{x^m y^n}{m! \, n!};\label{5.4}
\end{align}
\begin{align}
	\mathcal{H}_3 (A, B; C; x, y) = \sum_{m, n \ge 0} (A)_{m-n} (B)_{m} (C)_m^{-1} \frac{x^m y^n}{m! \, n!};\label{5.5}
\end{align}
\begin{align}
	\mathcal{H}_4 (A, B'; C; x, y) = \sum_{m, n \ge 0} (A)_{m-n} (B')_{n} (C)_m^{-1} \frac{x^m y^n}{m! \, n!};\label{5.6}
\end{align}
\begin{align}
	\mathcal{H}_5 (A; C; x, y) = \sum_{m, n \ge 0} (A)_{m-n} \, (C)_m^{-1} \frac{x^m y^n}{m! \, n!};\label{5.7}
\end{align}
\begin{align}
	\mathcal{H}_6 (A; C; x, y) = \sum_{m, n \ge 0} (A)_{2m+n} \, (C)_{m+n}^{-1} \, \frac{x^m y^n}{m! \, n!};\label{5.8}
\end{align}
\begin{align}
	\mathcal{H}_7 (A; C, C'; x, y) = \sum_{m, n \ge 0} (A)_{2m+n} (C)^{-1}_{m}  (C')_n^{-1} \frac{x^m y^n}{m! \, n!};\label{5.9}
\end{align}
\begin{align}
	\mathcal{H}_8 (A, B; x, y) = \sum_{m, n \ge 0} (A)_{2m-n} (B)_{n-m}  \frac{x^m y^n}{m! \, n!};\label{5.10}
\end{align}
\begin{align}
	\mathcal{H}_9 (A, B; C; x, y) = \sum_{m, n \ge 0} (A)_{2m-n} \, (B)_n \, (C)_{m}^{-1} \, \frac{x^m y^n}{m! \, n!};\label{5.11}
\end{align}
\begin{align}
	\mathcal{H}_{10} (A; C; x, y) = \sum_{m, n \ge 0} (A)_{2m-n} (C)^{-1}_{m}  \frac{x^m y^n}{m! \, n!};\label{5.12}
\end{align}
\begin{align}
	\mathcal{H}_{11} (A, B, C; C'; x, y) = \sum_{m, n \ge 0} (A)_{m-n} (B)_{n} (C)_n (C')_m^{-1}  \frac{x^m y^n}{m! \, n!};\label{5.13}.
\end{align}
It can be verified, using $\lim_{\varepsilon \to 0} \, \left(\frac{1}{\varepsilon}I\right)_m \ \varepsilon^m = I$, that the matrix functions defined in \eqref{5.1}-\eqref{5.13} are confluent cases of Horn matrix  functions. Indeed, we have
\begin{align}
	\Gamma_1 (A, B; B'; x, y) &= \lim_{\varepsilon \to 0} G_2\left(A, \frac{1}{\varepsilon}I, B, B'; x, \varepsilon y\right);\label{eq1.13}
\end{align}
\begin{align}
	\Gamma_2 (B, B'; x, y)  &= \lim_{\varepsilon \to 0} G_1\left(\frac{1}{\varepsilon}I, B, B'; \varepsilon x, \varepsilon y\right);\label{eq1.14}
\end{align}
\begin{align}
	\mathcal{H}_1 (A, B; C'; x, y) &= \lim_{\varepsilon \to 0} H_1 \left(A, B, \frac{1}{\varepsilon}I; C'; x, \varepsilon y\right);\label{eq1.15}
\end{align}
\begin{align}
	\mathcal{H}_2 (A, B, C; C''; x, y)  &=\lim_{\varepsilon \to 0} H_2\left(A, B, C, \frac{1}{\varepsilon}I;  C''; x, \varepsilon y\right);
	\\[5pt]
	\mathcal{H}_3 (A, B, C''; x, y)  &=\lim_{\varepsilon \to 0} H_2\left(A, B, \frac{1}{\varepsilon}I, \frac{1}{\varepsilon}I;  C''; x, (\varepsilon)^2 y\right)
	\\[5pt]
	&=\lim_{\varepsilon \to 0} \mathcal{H}_2 (A, B, \frac{1}{\varepsilon}I; C''; x, \varepsilon y);
	\\[5pt]
	\mathcal{H}_4 (A, C, C''; x, y)  &=\lim_{\varepsilon \to 0} H_2\left(A, \frac{1}{\varepsilon}I, C, \frac{1}{\varepsilon}I;  C''; \varepsilon x, \varepsilon y\right)
	\\[5pt]
	&=\lim_{\varepsilon \to 0} \mathcal{H}_2 (A, \frac{1}{\varepsilon}I, C; C''; \varepsilon x,  y);
	\\[5pt]
	\mathcal{H}_5 (A; C''; x, y)  &=\lim_{\varepsilon \to 0} H_2\left(A, \frac{1}{\varepsilon}I, \frac{1}{\varepsilon}I, \frac{1}{\varepsilon}I;  C''; \varepsilon x, (\varepsilon)^2 y\right)
	\\[5pt]
	&=\lim_{\varepsilon \to 0} \mathcal{H}_2 (A, \frac{1}{\varepsilon}I, \frac{1}{\varepsilon}I; C''; \varepsilon x,  \varepsilon y);
	\\[5pt]
	\mathcal{H}_6 (A; C; x, y) & =\lim_{\varepsilon \to 0} H_3 \left(A, \frac{1}{\varepsilon}I; C;  x, \varepsilon y\right);
	\\[5pt]
	\mathcal{H}_7 (A; C, C'; x, y) & =\lim_{\varepsilon \to 0} H_4 \left(A, \frac{1}{\varepsilon}I; C, C';  x, \varepsilon y\right);
	\\[5pt]
		\mathcal{H}_8 (A, B; x, y) & =\lim_{\varepsilon \to 0} H_6 \left(A, B, \frac{1}{\varepsilon}I;  x, \varepsilon y\right);
	\\[5pt]
	\mathcal{H}_9 (A, B; C'; x, y) & =\lim_{\varepsilon \to 0} H_7 \left(A, B, \frac{1}{\varepsilon}I; C';  x, \varepsilon y\right);
	\\[5pt]
	\mathcal{H}_{10} (A; C'; x, y) & =\lim_{\varepsilon \to 0} H_7 \left(A, \frac{1}{\varepsilon}I, \frac{1}{\varepsilon}I; C';  x, (\varepsilon)^2 y\right);
	\\[5pt]
	\mathcal{H}_{11} (A, C, C'; C''; x, y) & =\lim_{\varepsilon \to 0} H_2 \left(A, \frac{1}{\varepsilon}I, C, C'; C'';  \varepsilon x,  y\right).
\end{align}

\subsection{Regions of convergence}
We now determine the convergence of these matrix functions. To obtain so, we extend the well known technique develop by Horn given in \cite{sk}. Consider the hypergeometric matrix series
\begin{align}
F(x, y) = \sum_{m,n = 0}^{\infty} \mathcal{C}_{m,n} \, x^m y^n,
\end{align}
which gives
\begin{align}
\Vert F(x, y)\Vert & \le \sum_{m,n = 0}^{\infty} \Vert \mathcal{C}_{m,n}\Vert \, \vert x\vert^m \, \vert y\vert^n,\nonumber\\
& =  \sum_{m,n = 0}^{\infty}  \mathcal{A}_{m,n} \, \vert x\vert^m \, \vert y\vert^n.
\end{align}
Define
\begin{align}
\rho (m,n) = \left\vert \lim_{u \to \infty} f(mu, nu)\right\vert^{-1}, \quad m > 0, n \ge 0,
\\[5pt]
\sigma (m,n) = \left\vert \lim_{u \to \infty} g(mu, nu)\right\vert^{-1}, \quad m\ge 0, n > 0,
\end{align}
where $f(m, n) = \frac{\mathcal{A}_{m+1, n}}{\mathcal{A}_{m,n}}$ and $g(m, n) = \frac{\mathcal{A}_{m, n+1}}{\mathcal{A}_{m,n}}$. Now one can proceed in the same way as by Horn to find the region of convergence for Horn matrix functions. We will start by finding the region of convergence of Horn matrix function $G_1$ defined in \eqref{3.1}.
\begin{theorem}
	Let $A$, $B$ and $B'$ be matrices in $\mathbb{C}^{r\times r}$. Then the matrix function $G_1$ defined in \eqref{3.1} converges absolutely for $r + s < 1$, $\vert x\vert \le r$, $\vert y\vert \le s$.
\end{theorem}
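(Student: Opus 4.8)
The plan is to dominate $G_1$ by a scalar series and then run the machinery set up just above the statement. Write $\mathcal{C}_{m,n}=(A)_{m+n}(B)_{n-m}(B')_{m-n}/(m!\,n!)$ for the matrix coefficient of $x^m y^n$; by the inequality displayed immediately before the theorem it suffices to prove $\sum_{m,n\ge0}\Vert\mathcal{C}_{m,n}\Vert\,r^m s^n<\infty$ whenever $r+s<1$. The first and main step is to produce a sharp scalar majorant for $\Vert\mathcal{C}_{m,n}\Vert$. By submultiplicativity, $\Vert\mathcal{C}_{m,n}\Vert\le\Vert(A)_{m+n}\Vert\,\Vert(B)_{n-m}\Vert\,\Vert(B')_{m-n}\Vert/(m!\,n!)$. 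For a non-negative subscript the elementary bound $\Vert M+kI\Vert\le\Vert M\Vert+k$ gives $\Vert(M)_k\Vert\le(\Vert M\Vert)_k=\Gamma(\Vert M\Vert+k)\,\Gamma^{-1}(\Vert M\Vert)=O\big(k!\,(k+1)^{\Vert M\Vert-1}\big)$; for a negative subscript one writes $(M)_{-k}=[(M-kI)\cdots(M-I)]^{-1}$ (this is where one tacitly needs $B-jI$ and $B'-jI$ invertible for $j\ge1$), applies the Neumann bound $\Vert(M-jI)^{-1}\Vert\le(j-\Vert M\Vert)^{-1}$ valid once $j>\Vert M\Vert$, absorbs the finitely many remaining factors into a constant, and obtains $\Vert(M)_{-k}\Vert=O\big(k^{\Vert M\Vert}/k!\big)$. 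The key observation is that in the product $(B)_{n-m}(B')_{m-n}$ one symbol always carries a non-negative and the other a non-positive subscript, and their factorial parts cancel, leaving the purely subexponential bound $\Vert(B)_{n-m}\Vert\,\Vert(B')_{m-n}\Vert=O\big((|m-n|+1)^{\Vert B\Vert+\Vert B'\Vert-1}\big)$. Combined with the estimate on $\Vert(A)_{m+n}\Vert$ this yields constants $K>0$ and $P\ge0$, depending only on $\Vert A\Vert,\Vert B\Vert,\Vert B'\Vert$, with $\Vert\mathcal{C}_{m,n}\Vert\le\mathcal{A}_{m,n}:=K\,(m+n+1)^{P}\binom{m+n}{m}$ (the degenerate cases $A=0$, $B=0$, $B'=0$ being trivial).

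Second, I would note that this majorant is exactly what Horn's technique anticipates: for $\mathcal{A}_{m,n}$ as above one computes $\lim_{u\to\infty}f(mu,nu)=(m+n)/m$ and $\lim_{u\to\infty}g(mu,nu)=(m+n)/n$, so $\rho(m,n)=m/(m+n)$ and $\sigma(m,n)=n/(m+n)$, whence $\rho(m,n)+\sigma(m,n)\equiv1$ and Horn's construction assigns to $G_1$ the convergence domain $|x|+|y|<1$. To turn this into a proof it suffices to sum the majorant along the anti-diagonals $m+n=N$, which collapses the binomial coefficients:
\begin{align}
\sum_{m,n\ge0}\Vert\mathcal{C}_{m,n}\Vert\,r^m s^n &\le K\sum_{N\ge0}(N+1)^{P}\sum_{m=0}^{N}\binom{N}{m}r^m s^{N-m} \nonumber\\
&= K\sum_{N\ge0}(N+1)^{P}(r+s)^N,
\end{align}
and the right-hand series converges by the ratio test precisely when $r+s<1$. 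Hence $G_1$ converges absolutely — indeed uniformly on the closed polydisc $|x|\le r$, $|y|\le s$ — whenever $r+s<1$.

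The routine parts are the ratio computation giving $\rho$ and $\sigma$ and the final geometric-series estimate. The main obstacle is the first step: getting matrix-norm bounds on the Pochhammer factors that are sharp up to subexponential corrections, in particular handling the negative subscripts through $(M)_{-k}$ and recognising that the factorial parts of $(B)_{n-m}$ and $(B')_{m-n}$ cancel, so that the polynomial corrections surviving in $\mathcal{A}_{m,n}$ do not enlarge the region. A minor gap in the statement is that the invertibility of $B-jI$ and $B'-jI$ needed for those negative subscripts is not among the listed hypotheses; one either adds it or argues on the dense set of parameters where it holds and passes to the limit.
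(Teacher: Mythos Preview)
Your argument is correct and in fact more careful than the paper's. The paper simply asserts the bound $\Vert G_1(A,B,B';x,y)\Vert\le\sum_{m,n}(\Vert A\Vert)_{m+n}(\Vert B\Vert)_{n-m}(\Vert B'\Vert)_{m-n}\,|x|^m|y|^n/(m!\,n!)$, i.e.\ it majorises by the \emph{scalar} $G_1$ with parameters $\Vert A\Vert,\Vert B\Vert,\Vert B'\Vert$, and then feeds that scalar series into Horn's $\rho,\sigma$ machinery as a black box. You instead derive an explicit majorant $K(m+n+1)^P\binom{m+n}{m}$ by estimating each Pochhammer factor separately --- in particular handling the negative subscripts via Neumann bounds and observing the factorial cancellation between $(B)_{n-m}$ and $(B')_{m-n}$ --- and then close the argument by an anti-diagonal binomial sum rather than by citing Horn. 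What your route buys is rigour: the paper's inequality $\Vert(M)_k\Vert\le(\Vert M\Vert)_k$ is unproblematic for $k\ge0$ but is not justified (and is not even well-posed, since $(\Vert M\Vert)_{-k}$ can be negative or undefined) for negative subscripts, whereas you face that issue head-on. What the paper's route buys is brevity and a direct link to the classical $\rho,\sigma$ picture. Your remark that the invertibility of $B-jI$ and $B'-jI$ for $j\ge1$ is an unstated hypothesis is well taken; the paper omits it too.
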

\begin{proof}
	Consider the matrix series 
	\begin{align}
	G_1(A, B, B'; x, y) = \sum_{m,n =0}^{\infty} (A)_{m+n} (B)_{n-m} (B')_{m-n} \frac{x^m \, y^n}{m!\, n!}.
	\end{align}
	This implies
	\begin{align}
\Vert G_1(A, B, B'; x, y)\Vert &\le \sum_{m,n =0}^{\infty} (\Vert A\Vert)_{m+n} (\Vert B\Vert)_{n-m} (\Vert B'\Vert)_{m-n} \frac{\vert x\vert^m \, \vert y\vert^n}{m!\, n!}\nonumber\\
& = \sum_{m,n =0}^{\infty} \mathcal{A}_{m,n} \vert x\vert^m \, \vert y\vert^n. 
\end{align}	
So, we have 
\begin{align}
f_{m,n} = \frac{\mathcal{A}_{m+1, n}}{\mathcal{A}_{m,n}} = \frac{(\Vert A\Vert + m + n) \, (\Vert B'\Vert + m - n)}{(\Vert B\Vert + n - m -1) \, (m+1)},
\end{align}
\begin{align}
g_{m,n} = \frac{\mathcal{A}_{m, n+1}}{\mathcal{A}_{m,n}} = \frac{(\Vert A\Vert + m + n) \, (\Vert B\Vert + m - n)}{(\Vert B'\Vert + m - n -1) \, (n+1)} 
\end{align}
and 
\begin{align}
 \rho (m,n) = \left\vert \lim_{u \to \infty} f(mu, nu)\right\vert^{-1} = \frac{n}{m + n},
 \end{align}
 \begin{align}
 \sigma (m,n) = \left\vert \lim_{u \to \infty} g(mu, nu)\right\vert^{-1} = \frac{m}{m+n}.
\end{align}
Therefore the region of convergence is given by 
\begin{align}
\mathfrak{C} = \{(r,s) \mid 0 < r < \rho_{1,0} \cap 0 < s < \sigma_{0,1}\} = K[1, 1],
\end{align}
\begin{align}
\mathfrak{Z} = \left\{(r,s) \mid \forall (m,n) \in \mathbb{R}_+^2: 0 < r < \frac{n}{m+n} \cup 0 < s < \frac{m}{m+n}\right\}.\label{3.22}
\end{align}
Eliminating $m$ and $n$ from \eqref{3.22} gives required  region of absolute convergence. 
\end{proof}
Note that the region of absolute convergence of Horn matrix series $G_1$ is identical with the region of convergence in the complex case.
 The region of convergence of other Horn's matrix series is same as of the series with complex parameters, one can see \cite{emo}, \cite{sk}.

 Jod\'ar and Cort\'es \cite{jc00}, introduced the concept of a fundamental set of solutions for matrix  differential equations of the type
 \begin{align}
 X'' = f_1(z) X' + f_2(z) X f_3(z) + X' f_4(z),
 \end{align} 
 where $f_i, 1 \le i \le 4$ are matrix valued functions of complex variable $z$. A  closed form general solution of such bilateral type matrix differential equation is determined  in terms of Gauss hypergeometric matrix function. In \cite{al}, systems of bilateral type matrix differential equation have been given for Appell matrix functions of two variables. We now give the systems of matrix differential equations of bilateral type obeyed by the Horn matrix functions  and Horn confluent matrix functions defined in \eqref{3.1}-\eqref{5.13}. Let $U_{xx} = \frac{\partial^2 U}{\partial x^2}$, $U_{xy} = \frac{\partial^2U}{\partial x \partial y}$, $U_{yy} = \frac{\partial^2 U}{\partial y^2}$, $U_{x} = \frac{\partial U}{\partial x}$, $U_{y} = \frac{\partial U}{\partial y}$. Then the system of matrix differential equations of bilateral type obeyed by the Horn matrix function $G_1$ is given below:
\begin{theorem}\label{t3.2}
Let $A$, $B$ and $B'$ be matrices in $\mathbb{C}^{r\times r}$ such that $BB' = B'B$. Then the system of matrix differential equations of bilateral type satisfied by the Horn matrix function $G_1$ is given by
\begin{align}
&x (1 + x) U_{xx} - y U_{xy} - y^2 U_{yy} + U_x(I-B) + x (A+I) U_x + x U_x B' + yU_y B'\nonumber\\
& \quad  - y(A+I) U_y + AUB' = 0,\label{3.23}
\\[5pt]
& y (1 + y) U_{yy} - x U_{xy} - x^2 U_{xx} + U_y(I-B') + y (A+I) U_y + y U_y B + xU_x B\nonumber\\
& \quad  - x(A+I) U_x + AUB = 0,\label{3.24}
\end{align}
\end{theorem}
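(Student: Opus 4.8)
The plan is to establish \eqref{3.23} by inserting the defining double series of $G_1$ into the operator on its left-hand side and showing that the coefficient of every monomial $x^m y^n$ vanishes, and then to obtain \eqref{3.24} from \eqref{3.23} by an index-interchange symmetry, so that no second computation is needed. Write $U = G_1(A,B,B';x,y) = \sum_{m,n\ge 0}\mathcal{C}_{m,n}\,x^m y^n$ with $\mathcal{C}_{m,n} = (A)_{m+n}(B)_{n-m}(B')_{m-n}/(m!\,n!)$. Inside the region of convergence provided by Theorem~3.1 all the series obtained by termwise differentiation converge absolutely, so it suffices to verify the coefficient identity. First I would read off the contribution of each summand of \eqref{3.23} to the coefficient of $x^m y^n$: $x(1+x)U_{xx}$ contributes $(m+1)m\,\mathcal{C}_{m+1,n} + m(m-1)\mathcal{C}_{m,n}$; $-yU_{xy}$ contributes $-(m+1)n\,\mathcal{C}_{m+1,n}$; $-y^2U_{yy}$ contributes $-n(n-1)\mathcal{C}_{m,n}$; $U_x(I-B)$ contributes $(m+1)\mathcal{C}_{m+1,n}(I-B)$; $x(A+I)U_x$ contributes $m(A+I)\mathcal{C}_{m,n}$; $xU_xB'$ contributes $m\,\mathcal{C}_{m,n}B'$; $yU_yB'$ contributes $n\,\mathcal{C}_{m,n}B'$; $-y(A+I)U_y$ contributes $-n(A+I)\mathcal{C}_{m,n}$; and $AUB'$ contributes $A\mathcal{C}_{m,n}B'$.

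Next I would separate the total into the part carrying $\mathcal{C}_{m+1,n}$ and the part carrying $\mathcal{C}_{m,n}$. The first part is $(m+1)\mathcal{C}_{m+1,n}\bigl[(m-n+1)I-B\bigr]$. The key manipulation is that $(m-n+1)I-B = -\bigl(B+(n-m-1)I\bigr)$, while the Pochhammer contiguous relations give $(m+1)\mathcal{C}_{m+1,n} = (A)_{m+n}\bigl(A+(m+n)I\bigr)(B)_{n-m-1}(B')_{m-n}\bigl(B'+(m-n)I\bigr)/(m!\,n!)$; since $BB'=B'B$, the trailing factor $B+(n-m-1)I$ may be slid leftward past the entire $B'$-block and absorbed via $(B)_{n-m-1}\bigl(B+(n-m-1)I\bigr)=(B)_{n-m}$, so the first part collapses to $-(A)_{m+n}\bigl(A+(m+n)I\bigr)(B)_{n-m}(B')_{m-n}\bigl(B'+(m-n)I\bigr)/(m!\,n!)$. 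Expanding $A+(m+n)I$ and $B'+(m-n)I$ and using that $A$ commutes with $(A)_{m+n}$, this equals $-A\mathcal{C}_{m,n}B' - (m-n)A\mathcal{C}_{m,n} - (m+n)\mathcal{C}_{m,n}B' - (m^2-n^2)\mathcal{C}_{m,n}$. The second part, after the elementary simplifications $m(m-1)+m=m^2$ and $-n(n-1)-n=-n^2$, works out to $(m^2-n^2)\mathcal{C}_{m,n} + (m-n)A\mathcal{C}_{m,n} + (m+n)\mathcal{C}_{m,n}B' + A\mathcal{C}_{m,n}B'$. Adding the two parts, the terms of each of the four shapes $A\mathcal{C}_{m,n}B'$, $A\mathcal{C}_{m,n}$, $\mathcal{C}_{m,n}B'$, $\mathcal{C}_{m,n}$ cancel in pairs; hence the coefficient of $x^m y^n$ is zero and \eqref{3.23} follows.

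Finally, I would deduce \eqref{3.24} from \eqref{3.23} without repeating the work. Interchanging the summation indices $m\leftrightarrow n$ in \eqref{3.1} and using $BB'=B'B$ to commute $(B)_{n-m}$ past $(B')_{m-n}$ gives $G_1(A,B,B';x,y)=G_1(A,B',B;y,x)$. Applying the already-proved equation \eqref{3.23}, written for the parameter triple $(A,B',B)$, to the function $(u,v)\mapsto G_1(A,B',B;u,v)$ and then setting $u=y$, $v=x$ transforms \eqref{3.23} into precisely \eqref{3.24} (using $U_{xy}=U_{yx}$). The main obstacle in the whole argument is bookkeeping rather than depth: because $A$ does not commute with $B$ or $B'$, each contiguous relation must be applied on the correct side and the surviving $B$-factor must be transported across the $B'$-block, and it is exactly the hypothesis $BB'=B'B$ together with the identity $(m-n+1)I-B=-\bigl(B+(n-m-1)I\bigr)$ that make this legitimate — which is also the reason the equation cannot be written with all coefficient matrices on one side but must instead take the bilateral form.
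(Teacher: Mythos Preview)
Your argument is correct and, for equation \eqref{3.23}, follows the same direct-substitution strategy as the paper: insert the series for $G_1$, read off the coefficient of $x^m y^n$, and verify that it vanishes using the contiguous relations for matrix Pochhammer symbols together with $BB'=B'B$. Your organization---splitting the coefficient into a $\mathcal{C}_{m+1,n}$-part and a $\mathcal{C}_{m,n}$-part and then converting the former to the latter via $(B)_{n-m-1}\bigl(B+(n-m-1)I\bigr)=(B)_{n-m}$---is in fact tidier than the paper's version, which packages everything at once over a common factor $(B+(n-m-1)I)^{-1}$ and then asserts that the bracketed expression collapses; your decomposition makes the four cancelling shapes $A\mathcal{C}_{m,n}B'$, $A\mathcal{C}_{m,n}$, $\mathcal{C}_{m,n}B'$, $\mathcal{C}_{m,n}$ fully explicit.

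Where you genuinely diverge is in the treatment of \eqref{3.24}. The paper simply says ``similarly'' and repeats the computation, whereas you exploit the symmetry $G_1(A,B,B';x,y)=G_1(A,B',B;y,x)$ (valid because $BB'=B'B$ lets the two Pochhammer blocks commute) to transport the already-proved \eqref{3.23} into \eqref{3.24} by the substitution $(x,y,B,B')\mapsto(y,x,B',B)$. This is a legitimate and more economical route: it halves the work and makes transparent why the two equations have exactly the same structure with $B\leftrightarrow B'$, $x\leftrightarrow y$. The only implicit assumption you rely on---that the negative-index Pochhammer symbols $(B)_{n-m}$ are defined (i.e.\ the relevant matrices are invertible)---is already built into the very definition \eqref{3.1} of $G_1$, so no hypothesis is missing.
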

\begin{proof}
	Let 
\begin{align}
U &= G_1(A, B, B'; x, y) = \sum_{m,n =0}^{\infty} \mathcal{A}_{m, n} \, x^m \, y^n.
\end{align}
Then, we have
\begin{align}
U_{xx} &= \sum_{m, n = 0}^{\infty} m(m-1) \mathcal{A}_{m, n} \, x^{m-2} \, y^n, \quad U_{xy} = \sum_{m, n = 0}^{\infty} m \, n \mathcal{A}_{m, n} \, x^{m-1} \, y^{n-1}\nonumber\\
U_{yy} &= \sum_{m, n = 0}^{\infty} n(n-1) \mathcal{A}_{m, n} \, x^{m} \, y^{n-2}, \quad U_{x} = \sum_{m, n = 0}^{\infty} m \mathcal{A}_{m, n} \, x^{m-1} \, y^n\nonumber\\
U_{y} &= \sum_{m, n = 0}^{\infty} n \mathcal{A}_{m, n} \, x^{m} \, y^{n-1}.\label{3.26}
\end{align}
Using \eqref{3.26} in the left side of Equation \eqref{3.23}, we get
\begin{align}
&x (1 + x) U_{xx} - y U_{xy} - y^2 U_{yy} + U_x(I-B) + x (A+I) U_x + x U_x B' + yU_y B'\nonumber\\
& \quad  - y(A+I) U_y + AUB'\nonumber\\ 
& = \sum_{m,n=0}^{\infty} [ m(A + (m+n)I) \mathcal{A}_{m, n} (B' + (m-n)I) - n (A + (m+n)I) \mathcal{A}_{m, n}\nonumber\\
& \quad \times  (B' + (m-n)I) + m (m-1)  \mathcal{A}_{m, n} (B + (n-m-1)I) - n (n-1) \mathcal{A}_{m, n}\nonumber\\
& \quad \times  (B + (n-m-1)I) + (A + (m+n)I) \mathcal{A}_{m, n} (I-B) (B' + (m-n)I) \nonumber\\
& \quad + m (A+I) \mathcal{A}_{m, n} (B+(n-m-1)I) + m  \mathcal{A}_{m, n} B' (B + (n-m-1)I)\nonumber\\
& \quad - n (A+I)  \mathcal{A}_{m, n} (B + (n-m-1)I) + n  \mathcal{A}_{m, n} B' (B + (n-m-1)I)\nonumber\\
& \quad + A   \mathcal{A}_{m, n} B' (B + (n-m-1)I) ] (B+(n-m-1)I)^{-1} x^m y^n\label{3.27}
\end{align}
Using the commutativity of matrices $B$, $B'$ and the distributive property of matrices, the expression written in the big bracket in Equation \eqref{3.27} turns out to be $0$ (zero matrix). Hence, the matrix differential equation \eqref{3.23} is proved. Similarly, we are able to prove the Equation \eqref{3.24}.
\end{proof}
The systems of matrix differential equations of bilateral type satisfied by remaining nine Horn matrix functions and 13 confluent matrix functions are listed in the Table~\ref{t1}. The proofs are similar to theorem~\ref{t3.2} and hence are omitted.

\begin{longtable}{|l|l|c|}
	\hline
	Functions & Systems of Matrix Differential equations & Conditions \\
	\hline
	$G_2$ & $\begin{array}{c}
	x(1+x) U_{xx} - y(1+x)U_{xy}  + U_x(I-B) + x(A+I)U_x\\
	+ x U_x B' - yA U_y + A U B' = 0,\\
y(1+y) U_{yy} - x(1+y)U_{xy}  + U_y(I-B') + y(A'+I)U_y\\
+ y U_y B - xA' U_x + A' U B = 0;
	\end{array}$ &  $\begin{array}{c}
	AA' = A'A,\\
	BB' = B'B 
	\end{array}$\\
	\hline
	$G_3$ & $\begin{array}{c}
	x(1+4x) U_{xx} - (4x + 2) y U_{xy} + y^2 U_{yy} + (I-A)U_x \\
	+ x(4 A'+6I)U_x - 2yA'U_y +  A'(A'+I) U = 0,\\
y(1+4y) U_{yy} - (4y + 2) x U_{xy} + x^2 U_{xx} + (I-A')U_y \\
+ y(4 A+6I)U_y - 2xAU_x +  A(A+I) U = 0;  
	\end{array}$ &  ......\\
	\hline
	$H_1$ & $\begin{array}{c}
	x(1-x)U_{xx} + y^2 U_{yy} + U_x C' - x (A+I)U_x\\
	-   B xU_x  - yAU_y + (B+I) yU_y - A B U  = 0,\\
	-y(1+y)U_{yy} + x(1-y) U_{xy} + (A-I)U_y\\
- yU_y(C+I) - B yU_y -xU_x C - B U C = 0;
	\end{array}$ &  $\begin{array}{c}
	AB = BA,\\
	CC' = C'C 
	\end{array}$\\
	\hline
	$H_2$ & $\begin{array}{c}
	x(x-1)U_{xx} - xyU_{xy} + x (A+I)U_x + B xU_x \\
	- U_x C'' - B yU_y  + A B U = 0,\\
	y(1+y)U_{yy} - xU_{xy} + y (I-A)U_y\\
+ yU_y (C+C'+I) + UCC' = 0;
	\end{array}$ &  $\begin{array}{c}
AB = BA,\\
CC' = C'C,\\
	CC'' = C''C,\\
	C' C'' = C'' C' 
	\end{array}$\\
	\hline
	$H_3$ & $\begin{array}{c}
	x(1-4x)U_{xx} - y(1-4x)U_{xy} - y^2 U_{yy} + U_x C\\
	- x (4A+6I)U_x -2(A+I)yU_y - A(A+I) U = 0,\\
y(1-y)U_{yy} + x(1-2y)U_{xy}  + U_y C -(A+I)yU_y \\
-yU_y B - 2xU_x B  - A U B = 0;	
	\end{array}$ &  $\begin{array}{c}
	BC = CB
	\end{array}$\\
	\hline
	$H_{4}$ & $\begin{array}{c}
	x(1-4x) U_{xx} - 4xy U_{xy} - y^2 U_{yy} + U_x C - 4x(A + I)U_x\\
	- y (3A + 2I) U_y - A (A+I) U = 0,\\
	y(1-y)U_{yy} - 2xy U_{xy} +  U_{y} C'  - y A U_y \\
	 - B y U_y  - 2 B  xU_x - A B U = 0;
	\end{array}$ &  $\begin{array}{c}
	AB = BA,\\
	CC' = C'C   
	\end{array}$\\
	\hline
	$H_{5}$ & $\begin{array}{c}
	x(1+4x)U_{xx} -  y(1-4x)U_{xy} + y^2 U_{yy} +U_x (I-C)\\
	+ 4x(A+I) U_x + y(3A+2I) U_y -  A(A+I) U = 0,\\
	y(1-y)U_{yy} -  xyU_{xy} +  2 x^2U_{xx} + U_y C - (A + I)yU_y\\
	-  yU_y B + (A+2I) x U_x - 2x U_x B - A U B = 0;
	\end{array}$ &  $\begin{array}{c}
	BC = CB
	\end{array}$\\
	\hline
	$H_{6}$ & $\begin{array}{c}
	x(1+4x)U_{xx} -  y(1+4x)U_{xy} + y ^2 U_{yy} + U_{x} (I-B)\\
	+ (4A+6I) xU_x - 2A yU_y + A (A+I) U = 0,\\
	y(1+y)U_{yy} -  x(2+y)U_{xy}  + yU_{y} (B+C+I)\\+ (I-A) U_y - xU_x C + B U C = 0;
	\end{array}$ &  $\begin{array}{c}
BC = CB
	\end{array}$\\
	\hline
	$H_{7}$ & $\begin{array}{c}
	x(1-4x)U_{xx} + 4xy U_{xy} - y^2U_{yy} + U_x C \\
	 - x(4A+6I)U_x + 2AyU_y - A(A+I) U  = 0,\\
	y(1+y)U_{yy} -  3xyU_{xy}  + yU_y (C+I) + ByU_y\\ + (I-A)U_{y} - xU_x C + B U  C = 0;
	\end{array}$ &  $\begin{array}{c}
AB = BA,\\
CC' = C'C
	\end{array}$\\
	\hline
		$\Gamma_{1}$ & $\begin{array}{c}
		x(1+x)U_{xx}  - y(1+x) U_{xy} + (I-B)U_x + (A+I) x U_x  \\
		+ xU_x B' - A yU_y + A U B' = 0,\\
		y U_{yy} - A U_{xy}  + (1+y) U_y - U_y B'\\  - xU_x  + B U  = 0;
	\end{array}$ &  $\begin{array}{c}
		AB = BA
	\end{array}$\\
	\hline
		$\Gamma_{2}$ & $\begin{array}{c}
		x U_{xx}  - y U_{xy} + (I-B)U_x +  x U_x  + yU_y + UB'  = 0,\\
		y U_{yy} - x U_{xy}  + (1+y) U_y - U_y B' - xU_x  + B U  = 0;
	\end{array}$ &  $\begin{array}{c}
		...
	\end{array}$\\
	\hline
	$\mathcal{H}_{1}$ & $\begin{array}{c}
		x(1-x)U_{xx}  + y^2  U_{yy} + U_x C - (A+I) x U_x  \\
		+ (yU_y- xU_x) B + (I-A) yU_y - A U B = 0,\\
		y U_{yy} - x U_{xy}  + (I-A) U_y + y U_y \\  + xU_x  +  U B = 0;
	\end{array}$ &  $\begin{array}{c}
		BC = CB
	\end{array}$\\
\hline
	$\mathcal{H}_{2}$ & $\begin{array}{c}
	x(1-x)U_{xx}  + xy  U_{xy} + U_x C - (A+I) x U_x  \\
	+ B (yU_y- xU_x)   - A BU  = 0,\\
	y U_{yy} - x U_{xy}  + (I-A) U_y + y U_y  +  U B' = 0;
\end{array}$ &  $\begin{array}{c}
AB = BA,\\
B'C = CB'
\end{array}$\\
\hline
	$\mathcal{H}_{3}$ & $\begin{array}{c}
	x(1-x)U_{xx}  + xy  U_{xy} + U_x C - (A+I) x U_x  \\
	+ (yU_y- xU_x) B  - A U B = 0,\\
	y U_{yy} - x U_{xy}  + (I-A) U_y + y U_y   +  U  = 0;
\end{array}$ &  $\begin{array}{c}
	BC = CB
\end{array}$\\
\hline
	$\mathcal{H}_{4}$ & $\begin{array}{c}
	x U_{xx}  + U_x C -  x U_x 	+ yU_y - A U  = 0,\\
	y U_{yy} - x U_{xy}  + (I-A) U_y + y U_y   +  U B' = 0;
\end{array}$ &  $\begin{array}{c}
	B'C = CB'
\end{array}$\\
\hline
	$\mathcal{H}_{5}$ & $\begin{array}{c}
	xU_{xx}  +  U_x C + yU_y- xU_x  - A U = 0,\\
	y U_{yy} - x U_{xy}  + (I-A) U_y + y U_y  +  U = 0;
\end{array}$ &  $\begin{array}{c}
...
\end{array}$\\
\hline	$\mathcal{H}_{6}$ & $\begin{array}{c}
	x(1-4x)U_{xx} + y(1-4x)U_{xy} - y^2  U_{yy} + U_x C \\ - (4A+6I) x U_x - (2A+2I) yU_y  - A (A+I)U = 0,\\
	y U_{yy} + x U_{xy}  +  U_y C - y U_y   -2 xU_x  -A  U = 0;
\end{array}$ &  $\begin{array}{c}
	...
\end{array}$\\
\hline
	$\mathcal{H}_{7}$ & $\begin{array}{c}
		x(1-4x)U_{xx}  - 4xy U_{xy} - y^2  U_{yy} + U_x C \\ - (4A+4I) x U_x - (3A+2I) yU_y  - A (A+I)U = 0,\\
		y U_{yy}  +  U_y C' - y U_y   -2 xU_x  -A  U = 0;
	\end{array}$ &  $\begin{array}{c}
	CC' = C'C
	\end{array}$\\
\hline
	$\mathcal{H}_{8}$ & $\begin{array}{c}
	x(1+4x)U_{xx} + y(1+4x)U_{xy} + y^2  U_{yy} + U_x (I-B) \\ + (4A+6I) x U_x  + 2A yU_y  + A (A+I)U = 0,\\
	y U_{yy} -2 x U_{xy}  + (I-A) U_y + y U_y   - xU_x  +  UB = 0;
\end{array}$ &  $\begin{array}{c}
	...
\end{array}$\\
\hline
$\mathcal{H}_{9}$ & $\begin{array}{c}
	x(1-4x)U_{xx}  + 4xy U_{xy} - y^2  U_{yy} + U_x C \\ - (4A+6I) x U_x + 2A yU_y  - A (A+I)U = 0,\\
	y U_{yy} - 2xU_{xy} +  (I-A)U_y  + y U_y   + UB = 0;
\end{array}$ &  $\begin{array}{c}
	BC = CB
\end{array}$\\
\hline
$\mathcal{H}_{10}$ & $\begin{array}{c}
	x(1-4x)U_{xx}  + 4xy U_{xy} - y^2  U_{yy} + U_x C \\ - (4A+6I) x U_x + 2A yU_y  - A (A+I)U = 0,\\
	y U_{yy} - 2xU_{xy} +  (I-A)U_y    + U = 0;
\end{array}$ &  $\begin{array}{c}
...
\end{array}$\\
\hline
$\mathcal{H}_{11}$ & $\begin{array}{c}
	x U_{xx}   + U_x C' -  x U_x +  yU_y  - A U = 0,\\
	y (1+y)U_{yy} - xU_{xy} +  (I-A)U_y  + (I+B)y U_y \\  +y U_y C  + BUC = 0;
\end{array}$ &  $\begin{array}{c}
AB = BA,\\
CC' = C'C.
\end{array}$\\
\hline
	\caption{Systems of partial matrix differential equations of bilateral type satisfied by Horn matrix functions and confluent matrix functions}\label{t1}
\end{longtable}

\subsection{Certain Integral Representations}
We now give the integral representation of some Horn matrix functions. Starting with the integral representation of $G_1(A, B, B'; x, y)$, presented in the following theorem:
\begin{theorem}
	For positive stable matrices $A$, $B$, $B' \in \mathbb{C}^{r \times r}$ such that $B B' = B' B$. The Horn matrix function $G_1(A, B, B'; x, y)$ can be presented in the integral form as:
	\begin{align}
		G_1(A, B, B'; x, y) = \int_{0}^{1} \left(1 + \frac{x}{t} + y t \right)^{-A} t^{B- I} (1 - t)^{-(B + B')} dt \times \Gamma\left(\begin{array}{c}
		I - B'\\
		B, I - B - B'
		\end{array}\right). \label{3.1.0} 
	\end{align}
\end{theorem}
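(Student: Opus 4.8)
Write the trailing matrix factor as $\Gamma(I-B')\,\Gamma^{-1}(B)\,\Gamma^{-1}(I-B-B')$ (a well-defined matrix since, under the hypotheses, $B$, $B'$ and all functions of them commute). The plan is to expand the kernel $\left(1+\tfrac{x}{t}+yt\right)^{-A}$ as a double power series in $x,y$, integrate term by term against $t^{B-I}(1-t)^{-(B+B')}$, recognize each $t$-integral as a matrix beta integral, and then reassemble the gamma factors into the Pochhammer symbols of \eqref{3.1}.

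First I would record the matrix binomial expansion: from $(1-z)^{-A}=\sum_{k\ge 0}(A)_k z^k/k!$, applied with $z=-(x/t+yt)$, followed by the ordinary binomial expansion of $(x/t+yt)^k$ and collection of powers of $x$ and $y$, one gets
\begin{align}
\left(1+\frac{x}{t}+yt\right)^{-A}=\sum_{m,n\ge 0}\frac{(-1)^{m+n}(A)_{m+n}}{m!\,n!}\,x^m y^n\,t^{\,n-m},
\end{align}
valid when $|x|/t+|y|t<1$. Substituting into the right-hand side of \eqref{3.1.0} and interchanging sum and integral, the general term involves
\begin{align}
\int_0^1 t^{\,B+(n-m)I-I}(1-t)^{\,(I-B-B')-I}\,dt=\Gamma\!\big(B+(n-m)I\big)\,\Gamma(I-B-B')\,\Gamma^{-1}\!\big(I-B'+(n-m)I\big),
\end{align}
i.e.\ the matrix beta integral $B\big(B+(n-m)I,\,I-B-B'\big)$; here I use $BB'=B'B$ (so the two exponent matrices commute), positive stability of $B$, the condition $\beta(I-B-B')>0$, and the integral representation of $\Gamma$ from Section~2, together with $B+(n-m)I+(I-B-B')=I-B'+(n-m)I$.

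Next I would multiply by $\Gamma(I-B')\,\Gamma^{-1}(B)\,\Gamma^{-1}(I-B-B')$ and rearrange the commuting factors: the pair $\Gamma(I-B-B')\,\Gamma^{-1}(I-B-B')$ cancels, while $\Gamma^{-1}(B)\,\Gamma(B+(n-m)I)=(B)_{n-m}$ and $\Gamma(I-B')\,\Gamma^{-1}(I-B'+(n-m)I)=\big[(I-B')_{n-m}\big]^{-1}$ by \eqref{c1eq.010}. Hence the right-hand side of \eqref{3.1.0} equals $\sum_{m,n\ge 0}\frac{(A)_{m+n}}{m!\,n!}\,x^m y^n\,(B)_{n-m}\,(-1)^{m+n}\big[(I-B')_{n-m}\big]^{-1}$. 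Finally, the Pochhammer reflection identity $(C)_\ell=(-1)^\ell\big[(I-C)_{-\ell}\big]^{-1}$, valid for every integer $\ell$ (proved at scalar level from $(c)_{-k}=(-1)^k/(1-c)_k$ and lifted by the functional calculus), applied with $C=B'$ and $\ell=m-n$, gives $(-1)^{m+n}\big[(I-B')_{n-m}\big]^{-1}=(B')_{m-n}$, so the series collapses to $\sum_{m,n\ge0}(A)_{m+n}(B)_{n-m}(B')_{m-n}\frac{x^m y^n}{m!\,n!}=G_1(A,B,B';x,y)$.

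The main obstacle is the termwise integration: the full kernel is integrable on $(0,1)$ because $\left(1+\tfrac{x}{t}+yt\right)^{-A}\sim (t/x)^{A}\to 0$ as $t\to 0^+$, but the individual terms $t^{\,n-m}t^{B-I}$ are \emph{not} integrable near $t=0$ once $m$ is large, so a naive absolute-convergence estimate does not justify the interchange. I would circumvent this by first establishing the identity for $(x,y)$ in a neighbourhood of the origin: on a short interval $(0,\delta)$ one writes $\left(1+\tfrac{x}{t}+yt\right)^{-A}=(t/x)^{A}\big(1+t/x+yt^2/x\big)^{-A}$ and expands the (now small-argument) factor, on $[\delta,1]$ one keeps the plain binomial expansion, and in each case the resulting integrands are dominated, uniformly in $m,n$, by a fixed integrable function (using $\beta(A)+\beta(B)>0$); the general $(x,y)$ then follows by analytic continuation in $x$ and $y$. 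A secondary point is bookkeeping of the spectral hypotheses — positive stability of $A,B,B'$, $\beta(I-B-B')>0$, and the genericity of $\sigma(B)$, $\sigma(B')$ needed for the negative-index Pochhammer symbols and for $(I-B')_{n-m}$ to be invertible — but these are precisely the conditions already implicit in the convergence of the series \eqref{3.1}.
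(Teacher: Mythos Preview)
Your argument is correct and uses exactly the same three ingredients as the paper --- the Pochhammer reflection $(B')_{m-n}=(-1)^{m+n}\big[(I-B')_{n-m}\big]^{-1}$, the matrix beta integral for $(B)_{n-m}(I-B')_{n-m}^{-1}$, and the binomial expansion of $(1+x/t+yt)^{-A}$ --- only run in the opposite direction: the paper starts from the series \eqref{3.1}, applies the reflection first, then replaces $(B)_{n-m}(I-B')_{n-m}^{-1}$ by its beta-integral representation, and finally resums the kernel via $(1-u-v)^{-A}=\sum_{m,n}(A)_{m+n}u^mv^n/(m!\,n!)$ with $u=-x/t$, $v=-yt$. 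Your extra care about justifying termwise integration (which the paper omits entirely) is a genuine addition; note that the paper's order of operations sidesteps the non-integrable-term issue only superficially, since resumming under the integral sign still hides the same interchange.
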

\begin{proof}
	Using the matrix identity $(A)_{-n} = (-1)^n (I - A)_n^{-1}$ in \eqref{3.1}, we get
	\begin{align}
	G_1(A, B, B'; x, y) & = \sum_{m, n \ge 0} (A)_{m + n} (B)_{n - m} (-1)^{m + n} (I - B')_{n - m}^{-1} \frac{x^m \, y^n}{m ! \, n!}\nonumber\\
	& = \sum_{m, n \ge 0} (A)_{m + n} \frac{(-x)^m \, (-y)^n}{m ! \, n!} (B)_{n - m} \, (I - B')_{n - m}^{-1}. \label{3.1.1}
	\end{align}
Now, using the integral representation of Pochammer symbol
\begin{align}
	(A)_m \, (C)_m^{-1} = \Gamma(C) \, \Gamma^{-1}(A) \, \Gamma^{-1} (C-A) \int_{0}^{1} t^{A + (m-1)I} (1 - t)^{C-A-I} dt, \quad AC = CA
\end{align} 
in \eqref{3.1.1}, we get
\begin{align}
	G_1(A, B, B'; x, y) & =  \sum_{m, n \ge 0} (A)_{m + n} \frac{(-x)^m \, (-y)^n}{m ! \, n!} \,  \int_{0}^{1}  t^{B + (n - m - 1)I} (1 - t)^{-(B + B')} dt\nonumber\\
	& \quad  \times \Gamma\left(\begin{array}{c}
		I - B'\\
		B, I - B - B'
	\end{array}\right). \label{3.1.2}
\end{align}
The matrix identity $(1 - x - y)^{-A} = \sum_{m, n = 0}^{\infty} (A)_{m + n} \frac{x^m \, y^n}{m ! \, n!}$ and the equation \eqref{3.1.2} together yield the integral representation \eqref{3.1.0}.
\end{proof}
Next, we give the integral representations of $G_2$, $H_3$ and $H_4$ presented in the theorems below. Since the proofs are similar to $G_1$, so we omit them.
\begin{theorem}
Let $A$, $A'$, $B$, $B'$, $I - B'$, $I - B - B'$  be  positive stable matrices in $\mathbb{C}^{r \times r}$ such that $B B' = B' B$. Then, the Horn matrix function $G_2 (A, A', B, B'; x, y)$ can be presented in the integral form as:
\begin{align}
G_2 (A, A', B, B'; x, y) & = \int_{0}^{1} \left(1 + \frac{x}{t}\right)^{-A} \, (1 + y \, t)^{-A'} \, t^{B - I} \, (1 - t)^{-(B + B')} \, dt \nonumber\\
&  \quad  \times   \Gamma (I - B') \, \Gamma^{-1} (B) \, \Gamma^{-1} (I - B - B').
\end{align}
\end{theorem}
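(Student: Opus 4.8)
The plan is to follow, step for step, the three-part argument used above for $G_1$; the only structural change is that the single factor $(A)_{m+n}$ is now split into the pair $(A)_m\,(A')_n$, so that at the end one obtains a \emph{product} of two one-variable series instead of a single two-variable one, and in particular no commutativity between $A$ and $A'$ is needed.

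First I would apply the matrix identity $(A)_{-n} = (-1)^n (I-A)_n^{-1}$ with $A\mapsto B'$ and $n\mapsto n-m$ to write $(B')_{m-n} = (-1)^{m-n}(I-B')_{n-m}^{-1}$, and, using $(-1)^{m-n} = (-1)^{m+n}$, absorb the sign into the monomials so that $x^m y^n$ becomes $(-x)^m(-y)^n$. This brings \eqref{3.2} into the form
\[
G_2(A,A',B,B';x,y) = \sum_{m,n\ge 0} (A)_m\,(A')_n\,\frac{(-x)^m\,(-y)^n}{m!\,n!}\,(B)_{n-m}\,(I-B')_{n-m}^{-1}.
\]
Next I would substitute the beta-type integral representation of a Pochhammer ratio, $(P)_k\,(Q)_k^{-1} = \Gamma(Q)\,\Gamma^{-1}(P)\,\Gamma^{-1}(Q-P)\int_0^1 t^{P+(k-1)I}(1-t)^{Q-P-I}\,dt$ (valid when $PQ=QP$ and the relevant matrices are positive stable), taking $P\mapsto B$, $Q\mapsto I-B'$, $k\mapsto n-m$; here the hypothesis $BB'=B'B$ is exactly what makes the substitution legitimate, and $Q-P = I-B-B'$. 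This replaces $(B)_{n-m}(I-B')_{n-m}^{-1}$ by $\Gamma(I-B')\,\Gamma^{-1}(B)\,\Gamma^{-1}(I-B-B')\int_0^1 t^{B+(n-m-1)I}(1-t)^{-(B+B')}\,dt$, and, exactly as in the derivation of \eqref{3.1.2}, the three scalar $\Gamma$-factors can be pulled out to the right.

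The last step is to interchange summation and integration and split $t^{B+(n-m-1)I} = t^{B-I}\,t^{n}\,t^{-m}$, after which the double sum factors as $\bigl(\sum_{m\ge 0}(A)_m(-x/t)^m/m!\bigr)\bigl(\sum_{n\ge 0}(A')_n(-yt)^n/n!\bigr)$. By the matrix binomial theorem $(1-X)^{-P} = \sum_{k\ge 0}(P)_k X^k/k!$ these two one-variable series equal $(1+x/t)^{-A}$ and $(1+yt)^{-A'}$ respectively, and assembling the pieces yields the claimed integral representation.

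The sign bookkeeping, the extraction of the scalar gamma factors, and the summation of the two geometric-type series are all routine and formally identical to the $G_1$ computation. The one point that deserves attention — and the only real obstacle — is the justification of exchanging the double series with the $t$-integral over $(0,1)$: one verifies absolute convergence of the double series, uniform on compact subsets of $(0,1)$, for $(x,y)$ in a neighbourhood of the origin, while the positive-stability of $B$, $I-B'$ and $I-B-B'$ ensures the $\Gamma^{-1}$ images are defined and the limiting beta integral converges at both endpoints $t=0$ and $t=1$, and the positive-stability of $A$ and $A'$ keeps the final integrand $(1+x/t)^{-A}(1+yt)^{-A'}t^{B-I}(1-t)^{-(B+B')}$ integrable near $t=0$ and $t=1$; the identity then extends to the full domain of $G_2$ by analytic continuation. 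Since this is precisely the analytic situation already encountered for $G_1$, no genuinely new difficulty appears, which is why the paper omits the details.
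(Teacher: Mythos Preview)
Your proposal is correct and follows exactly the approach the paper intends: it mirrors the $G_1$ argument step for step, with the single structural change you identify---replacing the two-variable identity $(1-x-y)^{-A}=\sum_{m,n}(A)_{m+n}x^m y^n/(m!\,n!)$ by the product of the two one-variable matrix binomial expansions $(1+x/t)^{-A}$ and $(1+yt)^{-A'}$, which is precisely what the factorization $(A)_{m+n}\rightsquigarrow (A)_m(A')_n$ in $G_2$ forces. Your added remarks on the interchange of sum and integral and on the role of the positive-stability hypotheses go somewhat beyond what the paper spells out, but the core derivation is identical to the (omitted) one in the paper.
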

\begin{theorem}
	Let $A$, $B$, $C$, $C - A$ be positive stable matrices in $\mathbb{C}^{r \times r}$ such that $AB = BA$, $AC = CA$. Then the matrix function $H_3 (A, B; C; x, y)$ can be put in the integral form as
	\begin{align}
	H_3 (A, B; C; x, y) & = \int_{0}^{1} (1 - y \, t)^{-B} \, t^{A - I} \left(1 + \frac{x \, t^2}{1 - t}\right)^{-C - A - I} \,  (1 - t)^{C - A - I} \, dt \nonumber\\
	&  \quad  \times   \Gamma (C) \, \Gamma ^{-1} (A) \, \Gamma^{-1} (C - A). 
	\end{align}
\end{theorem}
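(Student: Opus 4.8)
The plan is to mirror the proof given above for the integral representation of $G_1$: rewrite the Pochhammer ratios appearing in \eqref{3.6} so that a single Euler beta-type integral is produced, interchange summation and integration, and then collapse the remaining double series by the matrix binomial theorem $\sum_{k\ge 0}(D)_k\,z^k/k!=(1-z)^{-D}$ (used in one variable at a time).

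First I would split the numerator symbol as $(A)_{2m+n}=(A)_m\,(A+mI)_{m+n}$. Commuting $(B)_n$ to the left of the two $A$-factors (legitimate since $AB=BA$) and then applying the Pochhammer–beta identity quoted just before the $G_1$ proof, now with parameters $A+mI$ and $C$, turns $(A+mI)_{m+n}\,(C)_{m+n}^{-1}$ into
\[
\Gamma(C)\,\Gamma^{-1}(A+mI)\,\Gamma^{-1}(C-A-mI)\int_{0}^{1} t^{A+(2m+n-1)I}(1-t)^{C-A-(m+1)I}\,dt ,
\]
all of whose matrix factors commute because $AC=CA$; positive stability of $A$ and of $C-A$ makes the $m=0$ integral converge, and one proceeds formally for the remaining terms. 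Next I would simplify the Gamma-matrix prefactor using $(A)_m\,\Gamma^{-1}(A+mI)=\Gamma^{-1}(A)$ (from \eqref{c1eq.010}) and $\Gamma^{-1}(C-A-mI)=(-1)^m\,(A-C+I)_m\,\Gamma^{-1}(C-A)$, the latter being exactly the identity $(A)_{-n}=(-1)^n(I-A)_n^{-1}$ applied to $C-A$ — the same device already used in the $G_1$ proof.

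After pulling the (absolutely convergent) double series inside the integral and writing $t^{A+(2m+n-1)I}=t^{A-I}\,t^{2m}\,t^{n}$ together with $(1-t)^{C-A-(m+1)I}=(1-t)^{C-A-I}(1-t)^{-m}$, the series factors into a product of two one-variable binomial series: the $n$-sum $\sum_{n}(B)_n (yt)^n/n!=(1-yt)^{-B}$, which (since $B$ commutes with $A$) comes out to the far left, and the $m$-sum $\sum_{m}(A-C+I)_m\frac{1}{m!}\left(-\frac{xt^2}{1-t}\right)^m=\left(1+\frac{xt^2}{1-t}\right)^{-(A-C+I)}$, which supplies the remaining $x$-dependent factor of the integrand. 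Collecting also the surviving powers $t^{A-I}(1-t)^{C-A-I}$ and the prefactor $\Gamma(C)\,\Gamma^{-1}(A)\,\Gamma^{-1}(C-A)$ then gives the asserted representation; the integral representations of $G_2$ and $H_4$ are handled in the same way.

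The main obstacle is choosing the split of $(A)_{2m+n}$ correctly. The alternative $(A)_{2m+n}=(A)_{m+n}\,(A+(m+n)I)_m$ would pair $(A)_{m+n}$ with $(C)_{m+n}^{-1}$ using parameters free of $m$, but it leaves a residual sum $\sum_{m}(A+(m+n)I)_m\,z^m/m!$ which is an algebraic (generalized-binomial) rather than an elementary function, so it does not reproduce the closed form above. The price of the split that does work is that the intermediate beta integral carries the parameter $C-A-mI$, whose spectral abscissa decreases with $m$, so for large $m$ that integral ceases to converge; rigorously one continues the identity analytically in the eigenvalues of $C-A$ from the range where $\beta(C-A)>m$, a subtlety that the $G_1$ argument also passes over. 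One must also keep careful track of the order of the matrix factors and of the exponent throughout, since $B$ is not assumed to commute with $C$: the computation goes through precisely because every $B$-dependent factor can be kept to the left of all $A$- and $C$-dependent factors, while the latter commute among themselves by $AC=CA$.
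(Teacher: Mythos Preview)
Your plan coincides with what the paper does: the paper omits the proof with the remark that it is ``similar to $G_1$,'' and your proposal carries out precisely that template --- split $(A)_{2m+n}=(A)_m(A+mI)_{m+n}$, convert $(A+mI)_{m+n}(C)_{m+n}^{-1}$ into an Euler beta integral, simplify the $m$-dependent Gamma factors via $(A)_m\Gamma^{-1}(A+mI)=\Gamma^{-1}(A)$ and $(A)_{-m}=(-1)^m(I-A)_m^{-1}$, and then close the two one-variable sums with the matrix binomial theorem. Your remarks on the ordering (keeping all $B$-factors to the far left, with the $A$- and $C$-factors commuting among themselves) and on the formal use of the beta integral when $\beta(C-A)\le m$ are more careful than anything the paper records.

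One concrete point you should not gloss over in the write-up: your $m$-sum gives
\[
\sum_{m\ge0}(A-C+I)_m\frac{1}{m!}\Bigl(-\tfrac{xt^2}{1-t}\Bigr)^m
=\Bigl(1+\tfrac{xt^2}{1-t}\Bigr)^{-(A-C+I)}
=\Bigl(1+\tfrac{xt^2}{1-t}\Bigr)^{C-A-I},
\]
so the exponent you obtain is $C-A-I$, not the $-C-A-I$ printed in the theorem. The scalar specialization confirms $c-a-1$, so the displayed formula carries a sign typo in the exponent of that factor; your derivation does not literally reproduce the stated identity, and you should say so explicitly rather than assert that ``collecting \dots\ gives the asserted representation.''
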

\begin{theorem}
	Let $A$, $B$, $C$, $C'$, $C - A$, $C' - B$ be commuting and  positive stable matrices in $\mathbb{C}^{r \times r}$. Then the matrix function $H_4 (A, B; C, C'; x, y)$ can be put in the integral form as
	\begin{align}
	&	H_4 (A, B; C, C'; x, y)\nonumber\\
	& = \int_{0}^{1}\int_{0}^{1} t^{A - I} \, (1 -  t)^{C - A - I} \, u^{B - I} \left(1 - u\right)^{C' - B - I} \,  (1 - u y)^{ - A} \, \left(1 - \frac{tx}{(1 - u y)^2}\right) \, dt \, du \nonumber\\
		&  \quad  \times   \Gamma (C) \, \Gamma (C') \, \Gamma ^{-1} (A) \, \Gamma ^{-1} (B) \, \Gamma^{-1} (C - A) \, \Gamma^{-1} (C' - B). 
	\end{align}
\end{theorem}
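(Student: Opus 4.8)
The plan is to follow the template of the $G_1$ integral representation, peeling off the two summation indices one at a time by means of Eulerian (Beta) integrals together with a closing binomial summation. First I would hold $m$ fixed and carry out the inner $n$-summation: writing $(A)_{2m+n}=(A)_{2m}(A+2mI)_n$, the $n$-series becomes $(A)_{2m}$ times the Gauss hypergeometric matrix function ${}_2F_1(A+2mI,B;C';y)$. Since $B$ and $C'-B$ are commuting positive stable matrices, the latter has the Euler-type integral representation, obtained directly from the matrix Beta integral $(P)_k(Q)_k^{-1}=\Gamma(Q)\Gamma^{-1}(P)\Gamma^{-1}(Q-P)\int_0^1 t^{P+(k-1)I}(1-t)^{Q-P-I}\,dt$,
\[
{}_2F_1(A+2mI,B;C';y)=\Gamma(C')\,\Gamma^{-1}(B)\,\Gamma^{-1}(C'-B)\int_0^1 u^{B-I}(1-u)^{C'-B-I}(1-uy)^{-A-2mI}\,du .
\]
This supplies at once the kernel $u^{B-I}(1-u)^{C'-B-I}$, the factor $(1-uy)^{-A}$ appearing in the claimed integrand, and a residual scalar factor $(1-uy)^{-2m}$ that is absorbed into the argument of the remaining $x$-series.

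At this stage $H_4$ has been reduced to $\Gamma(C')\Gamma^{-1}(B)\Gamma^{-1}(C'-B)\int_0^1 u^{B-I}(1-u)^{C'-B-I}(1-uy)^{-A}\,\Phi(u)\,du$ with $\Phi(u)=\sum_{m\ge 0}(A)_{2m}(C)_m^{-1}\frac{1}{m!}\bigl(x/(1-uy)^2\bigr)^m$. The next step is to turn $\Phi(u)$ into a $t$-integral on $[0,1]$: after using the Pochhammer duplication identity $(A)_{2m}=2^{2m}(\tfrac{A}{2})_m(\tfrac{A+I}{2})_m$ (valid for matrices, since the factors $A+jI$ commute) to expose a Pochhammer symbol of index $m$ that pairs with $(C)_m^{-1}$, one applies the matrix Beta integral so as to produce the kernel $t^{A-I}(1-t)^{C-A-I}$, and the leftover elementary power series in $x$ is summed by the matrix binomial theorem $(1-z)^{-A}=\sum_k (A)_k z^k/k!$ to the final factor $1-\dfrac{tx}{(1-uy)^2}$ displayed in the statement. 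Collecting the $\Gamma$-factors arising from the two Beta integrals then yields $\Gamma(C)\Gamma(C')\Gamma^{-1}(A)\Gamma^{-1}(B)\Gamma^{-1}(C-A)\Gamma^{-1}(C'-B)$.

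Throughout, the interchange of each summation with the corresponding integration is justified by absolute convergence: for $|x|,|y|$ in the region of convergence and under the stated positive-stability hypotheses, the series of $2$-norms converges and dominates the integrand, so termwise integration is legitimate; all movements of matrix factors past one another and past the scalar integration kernels are justified by the commutativity assumptions on $A$, $B$, $C$, $C'$ and on the differences $C-A$, $C'-B$.

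The step I expect to be the main obstacle is the middle one. In the $G_1$ argument the two Pochhammer symbols $(B)_{n-m}$ and $(I-B')_{n-m}^{-1}$ already carried the same index $n-m$, so the Beta integral applied directly; here, by contrast, the Pochhammer symbol that survives the $n$-summation has index $2m$, which does not match the index $m$ of $(C)_m^{-1}$. Reorganising it so that precisely the kernel $t^{A-I}(1-t)^{C-A-I}$ and precisely the displayed final factor emerge — rather than kernels with shifted or halved matrix arguments — while keeping every matrix factor on its correct side, is the delicate point; once that reduction is in place, the remaining manipulations run entirely parallel to those already carried out for $G_1$.
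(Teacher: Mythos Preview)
Your first reduction is exactly the step the paper's template prescribes: applying the Beta integral to the pair $(B)_n(C')_n^{-1}$ and summing the binomial series $\sum_n (A+2mI)_n (uy)^n/n!=(1-uy)^{-A-2mI}$ correctly produces the $u$-kernel $u^{B-I}(1-u)^{C'-B-I}$, the factor $(1-uy)^{-A}$, and the inner series $\Phi(u)=\sum_{m\ge0}(A)_{2m}(C)_m^{-1}\,\frac{1}{m!}\bigl(x/(1-uy)^2\bigr)^m$.

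The gap is precisely where you flag it, but your proposed mechanism does not close it. After the duplication $(A)_{2m}=4^m(\tfrac{A}{2})_m(\tfrac{A+I}{2})_m$ the only Pochhammer symbols with index $m$ available to pair with $(C)_m^{-1}$ in a Beta integral are $(\tfrac{A}{2})_m$ or $(\tfrac{A+I}{2})_m$; either choice yields a $t$-kernel $t^{A/2-I}(1-t)^{C-A/2-I}$ (or the analogous shift) together with the Gamma block $\Gamma(C)\Gamma^{-1}(A/2)\Gamma^{-1}(C-A/2)$, \emph{not} $t^{A-I}(1-t)^{C-A-I}$ and $\Gamma(C)\Gamma^{-1}(A)\Gamma^{-1}(C-A)$. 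The residual power series then sums to $(1-4tw)^{-(A+I)/2}$ with $w=x/(1-uy)^2$, not to the displayed factor $1-tw$. Alternatively, if one insists on the kernel $t^{A-I}(1-t)^{C-A-I}$ by writing $(A)_{2m}=(A)_m(A+mI)_m$ and applying the Beta integral to $(A)_m(C)_m^{-1}$, the leftover series is $\sum_{m}(A+mI)_m(tw)^m/m!$, which is a generalized binomial series of the $\sqrt{1-4tw}$ type and again is not $(1-tw)$ to any fixed matrix power.

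In short, no choice in your middle step reproduces simultaneously the kernel $t^{A-I}(1-t)^{C-A-I}$, the Gamma factors $\Gamma^{-1}(A)\Gamma^{-1}(C-A)$, and the printed final factor $\bigl(1-\tfrac{tx}{(1-uy)^2}\bigr)$; indeed that final factor carries no exponent in the statement and cannot be correct for $x\neq 0$ (at $y=0$ it would make $H_4$ linear in $x$). Your strategy does yield a genuine double Euler integral for $H_4$, but with kernel and closing factor different from those printed; to match the paper you would have to repair the statement rather than your argument.
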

We have not given here the integral formula for remaining Horn's functions since they do not culminate into appropriate integral.

\section{Differential formulae}
In this section, we give the differential formulae satisfied by  Horn matrix functions and their confluent cases. The differential formulae for first Horn matrix function $G_1(A, B, B'; x, y)$ are given in the theorem below:
\begin{theorem}\label{t5.1}
	Let $A$, $B$ and $B'$ be matrices in $\mathbb{C}^{r \times r}$. Then the Horn matrix function $G_1(A, B, B'; x, y)$ satisfies the following differential formulae
\begin{align}
&	\frac{\partial ^r}{\partial x^r} G_1(A, B, B'; x, y)\nonumber\\
& = (-1)^r \, (A)_r \, G_1(A+rI, B-rI, B'+rI; x, y) \, (I-B)_r^{-1} \, (B')_r, \quad BB' = B'B;\label{e5.1}
\\[5pt]
&	\frac{\partial ^r}{\partial y^r} G_1(A, B, B'; x, y)\nonumber\\
& = (-1)^r \, (A)_r \, G_1(A+rI, B+rI, B' - rI; x, y) \, (B)_r \, (I-B')_r^{-1}, \quad BB' = B'B;\label{e5.2}
\\[5pt]
& \left(x^2 \frac{\partial }{\partial x}\right)^r \ [x^{A+(r-1)I} G_1(A, B, B'; x, xy)]  = x^{A+rI} \, (A)_r \, G_1(A+rI, B, B'; x, xy);\label{e5.3}
\\[5pt]
& \left(y^2 \frac{\partial }{\partial y}\right)^r \ [y^{A+(r-1)I} G_1(A, B, B'; xy, y)]  = y^{A+rI} \, (A)_r \, G_1(A+rI, B, B'; xy, y);\label{e5.4}
\\[5pt]
& \left(x^2 \frac{\partial }{\partial x}\right)^r \ [ G_1(A, B, B'; x, \frac{y}{x}) \, x^{B'+(r-1)I}]  =  G_1(A, B, B'+rI; x, \frac{y}{x}) \ x^{B'+rI} \, (B')_r;\label{e5.5}
\\[5pt]
& \left(y^2 \frac{\partial }{\partial y}\right)^r \ [G_1(A, B, B'; \frac{x}{y}, y) \, y^{B + (r-1)I}]  =  G_1(A, B+rI, B'; \frac{x}{y}, y) \, y^{B+rI} \, (B)_r.\label{e5.6}
\end{align}
\end{theorem}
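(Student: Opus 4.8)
The plan is to prove all six identities directly from the defining double series \eqref{3.1}, differentiating (or applying the indicated operator) term by term — which is legitimate on the interior of the region of absolute convergence established for $G_1$ — and then re-packaging the resulting series as another copy of $G_1$ with shifted parameters. The only algebraic tools needed are the Pochhammer composition rule $(X)_{p+q}=(X)_p\,(X+pI)_q$ (read through \eqref{c1eq.010}), the reflection identity $(X)_{k-r}=(-1)^r\,(I-X)_r^{-1}\,(X-rI)_k$ (which follows from $(X-rI)_r=(-1)^r(I-X)_r$ together with \eqref{c1eq.010}, and specializes at $k=0$ to the identity $(X)_{-r}=(-1)^r(I-X)_r^{-1}$ used above), and the fact that functions of one and the same matrix commute.

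For \eqref{e5.1} I would differentiate the series $r$ times in $x$, discard the vanishing terms with $m<r$, and put $m=k+r$; the $(k,n)$ term then carries the matrix factor $(A)_{k+r+n}\,(B)_{n-k-r}\,(B')_{k+r-n}$. Applying composition to the first and third factors and the reflection identity to the middle one rewrites this as $(-1)^r\,(A)_r\,(A+rI)_{k+n}\,(I-B)_r^{-1}\,(B-rI)_{n-k}\,(B')_r\,(B'+rI)_{k-n}$. Now I would invoke $BB'=B'B$ to transport $(I-B)_r^{-1}$ to the right past $(B')_r$ and past $(B'+rI)_{k-n}$, and then $(B')_r$ to the right past $(B'+rI)_{k-n}$; since $(A)_r$, $(I-B)_r^{-1}$, $(B')_r$ are independent of the summation indices they pull out of the sum on the side to which they have been moved, and what remains is exactly $(-1)^r\,(A)_r\,G_1(A+rI,B-rI,B'+rI;x,y)\,(I-B)_r^{-1}(B')_r$. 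Identity \eqref{e5.2} then follows by applying \eqref{e5.1} to the symmetry $G_1(A,B,B';x,y)=G_1(A,B',B;y,x)$, which itself holds once $BB'=B'B$ (relabel $m\leftrightarrow n$ in \eqref{3.1}).

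For \eqref{e5.3}–\eqref{e5.6} I would first perform the indicated rescaling of one argument (for instance $y\mapsto xy$ in \eqref{e5.3}), which collapses $G_1$ into a single-variable power series $\sum_{m,n}c_{m,n}\,x^{m+n}$ — resp. $x^{m-n}$ in \eqref{e5.5}–\eqref{e5.6} — and then absorb the scalar power of $x$ into the matrix prefactor $x^{A+(r-1)I}$ (resp. $x^{B'+(r-1)I}$, $x^{B+(r-1)I}$), which is permissible because this power commutes with the corresponding Pochhammer symbol. Applying the operator term by term, the rule $\tfrac{d^r}{dx^r}x^{C}=(C-(r-1)I)_r\,x^{C-rI}$ produces in each term a Pochhammer prefactor that telescopes against the coefficient, via the composition rule, into $(A)_r$ (resp. $(B')_r$, $(B)_r$) multiplied by the coefficient of the series with the shifted parameter; resumming gives the right-hand side. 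A clean feature here is that exactly one of the three parameters is shifted ($A$ in \eqref{e5.3}–\eqref{e5.4}, $B'$ in \eqref{e5.5}, $B$ in \eqref{e5.6}) and the Pochhammer factor created is a polynomial in that very matrix, so it commutes with everything it must pass — which is why these four formulae carry no commutativity hypothesis.

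The one genuine obstacle is the matrix-ordering bookkeeping in \eqref{e5.1}–\eqref{e5.2}: the specific placement of $(I-B)_r^{-1}$ and $(B')_r$ on the right in the conclusion is precisely what the assumption $BB'=B'B$ is there to license, and one must be scrupulous that the ``negative-index'' symbol $(B)_{n-m-r}$ is understood through the reciprocal-gamma definition \eqref{c1eq.010}, whose use tacitly requires $I-B,2I-B,\dots$ to be invertible. For \eqref{e5.3}–\eqref{e5.6} the only thing to watch is the exponent-and-index arithmetic after the rescaling, together with the (easy) verification that no rearrangement across non-commuting matrices is ever needed.
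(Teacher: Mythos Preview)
Your treatment of \eqref{e5.1}--\eqref{e5.2} is essentially the paper's: it too differentiates the series, uses the Pochhammer composition/reflection identities, and records the right-hand side (the paper does the case $r=1$ and iterates, while you do all $r$ at once; your symmetry shortcut for \eqref{e5.2} is a pleasant variant).

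There is, however, a concrete slip in your handling of \eqref{e5.3}--\eqref{e5.6}. The operator on the left is $\bigl(x^{2}\tfrac{\partial}{\partial x}\bigr)^{r}$, not $\tfrac{\partial^{r}}{\partial x^{r}}$, so the rule you invoke, $\tfrac{d^{r}}{dx^{r}}x^{C}=(C-(r-1)I)_{r}\,x^{C-rI}$, is the wrong one; the relevant identity is
\[
\Bigl(x^{2}\frac{d}{dx}\Bigr)x^{C}=C\,x^{C+I},\qquad\text{hence}\qquad \Bigl(x^{2}\frac{d}{dx}\Bigr)^{r}x^{C}=(C)_{r}\,x^{C+rI}.
\]
With the ordinary-derivative rule the exponent moves the wrong way and the purported ``telescoping'' you describe simply does not occur. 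The paper's argument for \eqref{e5.3} is exactly to prove the single step
\[
\Bigl(x^{2}\frac{\partial}{\partial x}\Bigr)\bigl[x^{A}G_{1}(A,B,B';x,xy)\bigr]
=x^{A+I}(A)_{1}\,G_{1}(A+I,B,B';x,xy),
\]
via the identity $(A+(m+n)I)(A)_{m+n}=(A)_{1}(A+I)_{m+n}$, and then to iterate. If you replace your derivative rule by the correct one and argue by induction on $r$ (applying the $r=1$ step with $A$ shifted to $A+(r-1)I$), your outline becomes the paper's proof; as written, the paragraph for \eqref{e5.3}--\eqref{e5.6} does not go through.
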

\begin{proof}
From equation \eqref{3.1}, we have 
\begin{align}
		\frac{\partial}{\partial x} G_1(A, B, B'; x, y) &  = \sum_{m,n =0}^{\infty} (A)_{m+n} (B)_{n-m} (B')_{m-n} \ \frac{\partial}{\partial x} \frac{x^m \, y^n}{m!\, n!}\nonumber\\
	& = \sum_{m = 1,n =0}^{\infty} (A)_{m+n} (B)_{n-m} (B')_{m-n} \   \frac{x^{m-1} \, y^n}{(m-1)!\, n!}\nonumber\\
	& = \sum_{m,n =0}^{\infty} (A)_{m+n+1} (B)_{n-m-1} (B')_{m-n+1} \   \frac{x^{m} \, y^n}{m! \, n!}\nonumber\\
	& = (-1) (A)_1 \sum_{m,n =0}^{\infty} (A+I)_{m+n} (B-I)_{n-m} (B'+I)_{m-n} \   \frac{x^{m} \, y^n}{m! \, n!}\nonumber\\
	& \quad \times  (I-B)_1^{-1} (B')_1\nonumber\\
	& = (-1) (A)_1 \, G_1(A+I, B-I, B'+I; x, y) (I-B)_1^{-1} (B')_1.
\end{align}
Iterating this process $r$-times, we get required formula \eqref{e5.1}. In the similar way, we can proof \eqref{e5.2}. Now, to prove \eqref{e5.3}, consider the left hand side 
\begin{align}
&\left(x^2 \frac{\partial }{\partial x}\right) \ [x^{A} G_1(A, B, B'; x, xy)] \nonumber\\
& = \sum_{m, n = 0}^{\infty} \left(x^2 \frac{\partial }{\partial x}\right) x^{A+(m+n)I} (A)_{m+n} (B)_{n-m} (B')_{m-n} \  \frac{y^n}{m! \, n!}\nonumber\\
& =  \sum_{m, n = 0}^{\infty} (A+(m+n)I) x^{A+(m+n+1)I} (A)_{m+n} (B)_{n-m} (B')_{m-n} \  \frac{y^n}{m! \, n!}.\label{e5.8}
\end{align}  
Using the identity $(A+(m+n)I) \, (A)_{m+n} = A \, (A+I)_{m+n}$, equation \eqref{e5.8} yields
\begin{align}
&	\left(x^2 \frac{\partial }{\partial x}\right) \ [x^{A} G_1(A, B, B'; x, xy)]\nonumber\\
 & =   \sum_{m, n = 0}^{\infty} A \, x^{A + I} (A+I)_{m+n}  (B)_{n-m} (B')_{m-n} \  \frac{x^m (xy)^n}{m! \, n!}\nonumber\\
	& = (A)_1 \, x^{A + I} \,  \sum_{m, n = 0}^{\infty} (A+I)_{m+n}  (B)_{n-m} (B')_{m-n} \  \frac{x^m (xy)^n}{m! \, n!}\nonumber\\
	& = (A)_1 \, x^{A + I} \, G_1(A+I, B, B'; x, xy).
\end{align} 
Similarly the other formulae from \eqref{e5.4} to \eqref{e5.6} can be proved.
\end{proof}
Next, we give the differential formulas satisfy by the remaining nine Horn matrix functions and confluent matrix functions. Since the proofs are similar to theorem~\ref{t5.1} so we omit them. 
\begin{theorem}
	Let $A$, $A'$, $B$ and $B'$ be matrices in $\mathbb{C}^{r \times r}$. Then the Horn matrix function $G_2(A, A', B, B'; x, y)$ satisfies the following differential formulae
	\begin{align}
		&	\frac{\partial ^r}{\partial x^r} G_2(A, A', B, B'; x, y)\nonumber\\
		& = (-1)^r \, (A)_r \, G_2(A+rI, A', B-rI, B'+rI; x, y) \, (I-B)_r^{-1} \, (B')_r,\nonumber\\
		& \quad BB' = B'B;
		\\[5pt]
		&	\frac{\partial ^r}{\partial y^r} G_2(A, A', B, B'; x, y)\nonumber\\
		& = (-1)^r \, (A')_r \, G_2(A, A'+rI, B+rI, B' - rI; x, y) \, (B)_r \, (I-B')_r^{-1},\nonumber\\
		& \quad AA' = A'A,  BB' = B'B;
		\\[5pt]
		& \left(x^2 \frac{\partial }{\partial x}\right)^r \ [x^{A+(r-1)I} G_2(A, A', B, B'; x, y)]\nonumber\\
		&  = x^{A+rI} \, (A)_r \, G_2(A+rI, A', B, B'; x, y);
		\\[5pt]
		& \left(y^2 \frac{\partial }{\partial y}\right)^r \ [y^{A'+(r-1)I} G_2(A, A', B, B'; x, y)] \nonumber\\
		& = y^{A'+rI} \, (A')_r \, G_1(A, A'+rI, B, B'; x, y), \quad AA' = A'A;
		\\[5pt]
		& \left(x^2 \frac{\partial }{\partial x}\right)^r \ [ G_2(A, A', B, B'; x, \frac{y}{x}) \, x^{B'+(r-1)I}]\nonumber\\
		&  =  G_2(A, A', B, B'+rI; x, \frac{y}{x}) \ x^{B'+rI} \, (B')_r;
		\\[5pt]
		& \left(y^2 \frac{\partial }{\partial y}\right)^r \ [G_2(A, A', B, B'; \frac{x}{y}, y) \, y^{B + (r-1)I}]\nonumber\\
		&  =  G_2(A, A', B+rI, B'; \frac{x}{y}, y) \, y^{B+rI} \, (B)_r, \quad BB' = B'B.
	\end{align}
\end{theorem}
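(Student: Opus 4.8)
The plan is to mirror the proof of Theorem~\ref{t5.1} verbatim, splitting the six formulae into two families: the two ``ordinary'' derivative formulae $\partial^r/\partial x^r$ and $\partial^r/\partial y^r$, and the four formulae built on the Euler-type operators $x^2\partial/\partial x$ and $y^2\partial/\partial y$. Everything reduces to term-by-term differentiation of the defining series \eqref{3.2} followed by a reindexing of the summation and a regrouping of Pochhammer symbols.

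For the first family I would differentiate \eqref{3.2} term by term in $x$ (resp.\ $y$), drop the vanishing $m=0$ (resp.\ $n=0$) term, and reindex $m\mapsto m+1$ (resp.\ $n\mapsto n+1$). The generic coefficient $(A)_m(A')_n(B)_{n-m}(B')_{m-n}$ then becomes $(A)_{m+1}(A')_n(B)_{n-m-1}(B')_{m-n+1}$ (resp.\ $(A)_m(A')_{n+1}(B)_{n-m+1}(B')_{m-n-1}$). The key step is to rewrite each shifted factor by the elementary identities $(A)_{m+1}=A\,(A+I)_m$, $(B')_{m-n+1}=(B'+I)_{m-n}(B')_1$, and $(B)_{k-1}=-(I-B)_1^{-1}(B-I)_k$ — the last of which is the matrix identity $(C)_{-k}=(-1)^k(I-C)_k^{-1}$ already used in the integral representations together with \eqref{eq.07}--\eqref{c1eq.09}. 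Pulling the constant left factor $A$ (resp.\ $A'$) and the constant right factors $(I-B)_1^{-1}(B')_1$ (resp.\ $(B)_1(I-B')_1^{-1}$) out of the sum yields the $r=1$ instance, e.g.\ $\partial_x G_2(A,A',B,B';x,y)=-A\,G_2(A+I,A',B-I,B'+I;x,y)(I-B)_1^{-1}(B')_1$; proceeding by induction on $r$ and collapsing the accumulated constants via $(C)_r(C+rI)_k=(C)_{r+k}$ gives the stated formulae, with the parameters and exponents shifting exactly as written. The hypotheses $BB'=B'B$ (and, for the $y$-derivative, also $AA'=A'A$) are used precisely when one slides the right factor $(I-B)_r^{-1}$ past the block $(B'+rI)_{m-n}$, resp.\ moves $(A')_r$ to the left of the $(A)_m$-block, so that all matrix factors can be gathered on the correct side of the $G_2$ symbol.

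For the second family the starting point is the scalar identity $\left(x^2\frac{\partial}{\partial x}\right)x^{\mu}=\mu\,x^{\mu+1}$, so that applying $x^2\partial/\partial x$ to a term of $x^{A+(r-1)I}G_2(A,A',B,B';x,y)$, whose $x$-power sits entirely in the outer factor, produces the matrix factor $A+(\cdots)I$ and leaves $A'$, $B$, $B'$ untouched; combined with $(A+mI)(A)_m=A\,(A+I)_m$ this gives $\left(x^2\partial_x\right)[x^{A}G_2(A,A',B,B';x,y)]=x^{A+I}A\,G_2(A+I,A',B,B';x,y)$, and the $r$-fold statement follows by the analogous iterative argument (using $(A)_r(A+rI)_m=(A)_{r+m}$); the fourth formula is the mirror image in $y$ with $A'$ in place of $A$, whence the condition $AA'=A'A$, needed only to pull $(A')_r$ to the left of $(A)_m$. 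The last two formulae are the same device applied to the outer factor $x^{B'+(r-1)I}$ after the substitution $y\mapsto y/x$ (resp.\ $y^{B+(r-1)I}$ after $x\mapsto x/y$): the operator then produces the factor $B'+(\cdots)I$ (resp.\ $B+(\cdots)I$), and $(B')_{m-n}(B'+(m-n)I)=(B')_{m-n+1}$ (resp.\ $(B)_{n-m}(B+(n-m)I)=(B)_{n-m+1}$) converts it into the displayed parameter shift together with the constant $(B')_1$ (resp.\ $(B)_1$). Here the fifth formula needs no commutativity, since the new factor arises immediately to the right of $(B')_{m-n}$, whereas the sixth requires $BB'=B'B$ in order to slide the new factor $B+(n-m)I$ past the intervening $(B')_{m-n}$ to reach $(B)_{n-m}$.

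The only real difficulty is bookkeeping. One must be careful with the convention \eqref{eq.07}--\eqref{c1eq.09} for the bilateral Pochhammer symbols $(B)_{n-m}$ and $(B')_{m-n}$ at negative or shifted subscripts and with $(C)_{-k}=(-1)^k(I-C)_k^{-1}$, and — since $A$, $A'$, $B$, $B'$ need not all commute — with the left-to-right order in which the matrix factors occur, which is exactly where the listed commutativity conditions enter. Once these conventions are fixed the verification is a routine induction on $r$, wholly parallel to the proof of Theorem~\ref{t5.1}, which is why it is omitted in the paper and need not be reproduced here.
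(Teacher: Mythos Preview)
Your proposal is correct and follows exactly the route the paper intends: the paper omits the proof entirely, referring back to Theorem~\ref{t5.1}, and your term-by-term differentiation, reindexing, use of the Pochhammer identities $(A+mI)(A)_m=A\,(A+I)_m$ and $(C)_{-k}=(-1)^k(I-C)_k^{-1}$, and induction on $r$ reproduce that argument verbatim for $G_2$. Your analysis of where the commutativity hypotheses enter is more explicit than anything in the paper and is accurate.
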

\begin{theorem}
	Let $A$ and $A'$ be matrices in $\mathbb{C}^{r \times r}$. Then the Horn matrix function $G_3(A, A'; x, y)$ satisfies the following differential formulae
	\begin{align}
		&	\frac{\partial ^r}{\partial x^r} G_3(A, A'; x, y)  = (-1)^r \, (I-A)^{-1}_r \, G_3(A-rI, A'+2rI; x, y) \,  (A')_{2r};
		\\[5pt]
		&	\frac{\partial ^r}{\partial y^r} G_3(A, A'; x, y)  = (-1)^r \, (A)_{2r} \, G_3(A+2rI, A'-rI; x, y) \, (I-A')_r^{-1};
		\\[5pt]
		& \left(x^2 \frac{\partial }{\partial x}\right)^r \ [G_3(A, A'; x^2, \frac{y}{x}) x^{A'+(r-1)I} ] =  G_3 (A, A'+rI; x^2, \frac{y}{x}) x^{A'+rI} \, (A')_r;
		\\[5pt]
		& \left(y^2 \frac{\partial }{\partial y}\right)^r \ [y^{A+(r-1)I} G_3(A, A'; \frac{x}{y}, y^2)]   = y^{A+rI} \, (A)_r \, G_3(A+rI, A'; \frac{x}{y}, y^2).
	\end{align}
\end{theorem}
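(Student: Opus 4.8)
The plan is to follow the proof of Theorem~\ref{t5.1} essentially line for line: differentiate the defining series \eqref{3.3} term by term, shift the summation index, rewrite the shifted Pochhammer symbols by elementary identities so that what remains is again a $G_3$-series with translated parameters flanked by two constant matrices, and iterate. The structural fact that makes this go through without the hypothesis $AA'=A'A$ is that in each term of $G_3(A,A';x,y)=\sum_{m,n}(A)_{2n-m}(A')_{2m-n}\frac{x^my^n}{m!\,n!}$ the $(A)$-Pochhammer stands to the left of the $(A')$-Pochhammer; differentiation preserves this order, and every rearrangement one uses only exchanges matrices that are rational functions of a single matrix, or exchanges such a matrix with a scalar power of $x$ or $y$. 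As in Theorem~\ref{t5.1}, I would prove the $r=1$ case of each identity in detail and obtain general $r$ by re-applying it with $A,A'$ replaced by their shifts.

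For the first identity: differentiating \eqref{3.3} in $x$ and relabelling $m\mapsto m+1$ gives the general term $(A)_{2n-m-1}(A')_{2m-n+2}\,\frac{x^my^n}{m!\,n!}$. Using $(A')_{k+2}=(A')_2(A'+2I)_k$ together with $(A)_{k-1}=(A-I)^{-1}(A-I)_k=-(I-A)_1^{-1}(A-I)_k$, and commuting $(A')_2$ past $(A'+2I)_{2m-n}$, this becomes $\bigl(-(I-A)_1^{-1}\bigr)(A-I)_{2n-m}(A'+2I)_{2m-n}(A')_2\,\frac{x^my^n}{m!\,n!}$, whose middle factor is the coefficient of $G_3(A-I,A'+2I;x,y)$; this is the $r=1$ case. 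Iterating $r$ times and collecting the accumulated constants, which commute among themselves on each side, produces $(-1)^r(I-A)_r^{-1}$ on the left and $(A')_{2r}$ on the right. The second identity is the mirror computation: differentiate in $y$, relabel $n\mapsto n+1$, use $(A)_{k+2}=(A)_2(A+2I)_k$ and $(A')_{k-1}=-(I-A')_1^{-1}(A'-I)_k$, and keep the $A$-factors left and the $A'$-factors right, ending with $(-1)^r(A)_{2r}$ on the left and $(I-A')_r^{-1}$ on the right.

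For the third identity: substituting $x^2$ and $y/x$ into \eqref{3.3} gives $G_3(A,A';x^2,y/x)=\sum_{m,n}(A)_{2n-m}(A')_{2m-n}\frac{x^{2m-n}y^n}{m!\,n!}$; multiply it on the right by $x^{A'}$ (the power $x^{A'+(r-1)I}$ of the statement, for $r=1$). Since $x^{2m-n}$ is a scalar it may be absorbed into $x^{A'}$ and the resulting $x^{A'+(2m-n)I}$ commuted past $(A')_{2m-n}$; then one application of $x^2\frac{\partial}{\partial x}$ brings down a factor $A'+(2m-n)I$ and restores $x^{A'+(2m-n+1)I}$. Now $(A')_{2m-n}\bigl(A'+(2m-n)I\bigr)=(A')_{2m-n+1}$ advances the Pochhammer, and regrouping yields $G_3(A,A'+I;x^2,y/x)\,x^{A'+I}(A')_1$; iteration gives the shift $A'\mapsto A'+rI$, the power $x^{A'+rI}$, and the factor $(A')_r$. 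The fourth identity is the same computation after the substitution $x\leftrightarrow y$, $m\leftrightarrow n$, $A\leftrightarrow A'$: substitute $x/y,y^2$ into \eqref{3.3}, put $y^A$ on the left, apply $y^2\frac{\partial}{\partial y}$, and use $\bigl(A+(2n-m)I\bigr)(A)_{2n-m}=(A)_{2n-m+1}$.

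The differentiations, relabellings, and left/right bookkeeping are routine, and I would compress them with ``the proof is analogous to Theorem~\ref{t5.1}''. The genuinely delicate point is to check that one application of each operator really returns a $G_3$-series of the same form with shifted parameters, so that the induction on $r$ closes; this is precisely where the left-of/right-of placement of the $(A)$- and $(A')$-Pochhammer symbols earns its keep, and it is why none of the four formulas needs $A$ and $A'$ to commute. A secondary bookkeeping matter is that Pochhammer symbols with negative subscript occur throughout (e.g.\ $(A)_{2n-m-1}$, $(A')_{2m-n+2}$), to be read via $(A)_{-j}=(-1)^j(I-A)_j^{-1}$; this presupposes that no positive integer lies in $\sigma(A)$ or $\sigma(A')$, a condition already implicit in \eqref{3.3} and --- for the first two identities --- reflected in the presence of $(I-A)_r^{-1}$ and $(I-A')_r^{-1}$.
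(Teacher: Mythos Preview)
Your proposal is correct and follows essentially the same approach as the paper: the paper omits the proof entirely with the remark that it is ``similar to theorem~\ref{t5.1}'', and you do exactly that---differentiate the defining series \eqref{3.3} term by term, shift the summation index, reduce the shifted Pochhammer symbols via $(A)_{k-1}=-(I-A)_1^{-1}(A-I)_k$ and $(A')_{k+2}=(A')_2(A'+2I)_k$, and iterate the resulting $r=1$ identity. Your explicit observation that the left/right placement of the $(A)$- and $(A')$-factors is preserved throughout, so that no hypothesis $AA'=A'A$ is required, is a useful clarification that the paper leaves implicit.
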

\begin{theorem}
	Let $A$, $B$, $C$ and $C'$ be matrices in $\mathbb{C}^{r \times r}$. Then the Horn matrix function $H_1(A, B, C, C'; x, y)$ satisfies the following differential formulae
	\begin{align}
		&	\frac{\partial ^r}{\partial x^r} H_1(A, B, C, C'; x, y)\nonumber\\
		& =  (A)_r \, (B)_r \, H_1(A+rI, B+rI, C, C'+rI; x, y) \,   (C')^{-1}_r,\quad AB = BA;
		\\[5pt]
		&	\frac{\partial ^r}{\partial y^r} H_1(A, B, C, C'; x, y)\nonumber\\
		& = (-1)^r \, (I-A)^{-1}_r \, (B)_r \nonumber\\
		& \quad \times  H_1(A-rI, B+rI, C+rI, C'; x, y) \, (C)_r, \quad AB = BA,  CC' = C'C;
		\\[5pt]
		& \left(x^2 \frac{\partial }{\partial x}\right)^r \ [x^{A+(r-1)I} H_1(A, B, C, C'; x, \frac{y}{x})]  = x^{A+rI} \, (A)_r \, H_1(A+rI, B, C, C'; x, \frac{y}{x});
		\\[5pt]
		& \left(x^2 \frac{\partial }{\partial x}\right)^r \ [x^{B+(r-1)I} H_1(A, B, C, C'; x, {y}{x})]\nonumber\\
		&  = x^{B+rI} \, (B)_r \, H_1(A, B+rI, C, C'; x, {y}{x}), \quad AB = BA;
		\\[5pt]
		&  \left(y^2 \frac{\partial }{\partial y}\right)^r \ [ H_1(A, B, C, C'; x, {y}) y^{C+(r-1)I}]\nonumber\\
		&  =  H_1(A, B, C + rI, C'; x, {y}) \,  x^{C+rI} \, (C)_r, \quad CC'= C'C;
		\\[5pt]
		&   \frac{\partial^r }{\partial x^r} \ [ H_1(A, B, C, C'; x, {y}) x^{C'-I}]\nonumber\\
		&  =  (-1)^r \, H_1(A, B, C, C'-rI; x, {y}) \, (I-C')_r \, x^{C'-(r+1)I}.
	\end{align}
\end{theorem}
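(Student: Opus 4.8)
The plan is to imitate the proof of Theorem~\ref{t5.1}: differentiate the defining series \eqref{3.4} term by term, carry out an index shift, and then repackage the resulting sum---using the stated commutativity hypotheses to reorder the non-commuting matrix factors---as an $H_1$ series with shifted parameters. Throughout I would rely on the elementary Pochhammer identities $(P)_{k+1}=P\,(P+I)_k$, $(P)_{k+1}^{-1}=(P+I)_k^{-1}P^{-1}=P^{-1}(P+I)_k^{-1}$ (legitimate since all of these are functions of $P$), $(P+kI)(P)_k=P\,(P+I)_k=(P)_{k+1}$, and the relation $(P)_{-k}=(-1)^k(I-P)_k^{-1}$, equivalently $(P)_{k-1}=-(I-P)_1^{-1}(P-I)_k$; the last one is needed because in \eqref{3.4} the index $m-n$ runs over all of $\mathbb{Z}$.

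For the first formula I would write $\partial_x H_1=\sum_{m\ge 1,\,n\ge 0}(A)_{m-n}(B)_{m+n}(C)_n(C')_m^{-1}\,\frac{x^{m-1}y^n}{(m-1)!\,n!}$ and shift $m\mapsto m+1$. Using $(A)_{m+1-n}=A(A+I)_{m-n}$, $(B)_{m+1+n}=B(B+I)_{m+n}$ and $(C')_{m+1}^{-1}=(C'+I)_m^{-1}(C')^{-1}$, and then the hypothesis $AB=BA$ to slide $B$ leftward past $(A+I)_{m-n}$, the sum collapses to $(A)_1(B)_1\,H_1(A+I,B+I,C,C'+I;x,y)\,(C')_1^{-1}$ (note that the $(C'+I)_m^{-1}$ produced by the shift is precisely the $(C')_m^{-1}$-factor of $H_1$ with fourth parameter $C'+I$). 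Iterating $r$ times---using that $A+jI$ commutes with $B+jI$ at every stage and that $(C')^{-1}(C'+I)^{-1}\cdots(C'+(r-1)I)^{-1}=(C')_r^{-1}$---yields the first asserted formula. For the $y$-derivative the shift is $n\mapsto n+1$; now the index of $(A)$ decreases by one, so one uses $(A)_{m-n-1}=-(I-A)_1^{-1}(A-I)_{m-n}$ together with $(B)_{m+n+1}=B(B+I)_{m+n}$ and $(C)_{n+1}=C(C+I)_n$. Here $AB=BA$ lets $B$ slide left past $(A-I)_{m-n}$ and $CC'=C'C$ lets $C$ slide right past $(C')_m^{-1}$; the one-step identity is then $\partial_y H_1=-(I-A)_1^{-1}(B)_1\,H_1(A-I,B+I,C+I,C';x,y)\,(C)_1$, whose $r$-fold iterate is the second formula, the factor $(-1)^r$ accumulating from the $r$ copies of $-(I-\cdot)_1^{-1}$.

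For the identities built from $x^2\partial_x$ and $y^2\partial_y$ the extra ingredient is the rule $x^2\frac{\partial}{\partial x}\,x^{P+kI}=(P+kI)\,x^{P+(k+1)I}$ (and its $y$-analogue). Writing $H_1(A,B,C,C';x,\frac{y}{x})=\sum (A)_{m-n}(B)_{m+n}(C)_n(C')_m^{-1}\,\frac{x^{m-n}y^n}{m!\,n!}$ and multiplying by the appropriate power of $x$, one application of $x^2\partial_x$ brings out the factor $(A+(m-n)I)(A)_{m-n}=A(A+I)_{m-n}$, which re-sums to $(A)_1\,x^{A+I}\,H_1(A+I,B,C,C';x,\frac{y}{x})$; iterating this one-step identity $r$ times, exactly as in the treatment of \eqref{e5.3}, gives the stated formula. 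The $y^2\partial_y$ formulas and the $x/y$-variants are obtained by the identical computation with the two variables---and the matrices $A$, $B$, $C$---playing interchanged roles; $AB=BA$ is needed whenever a factor such as $B+(m+n)I$ must be moved past $(A)_{m-n}$, and $CC'=C'C$ whenever a factor $C+nI$ must be moved past $(C')_m^{-1}$. Finally, the last formula is a plain $r$-fold $x$-derivative: differentiating $H_1(A,B,C,C';x,y)\,x^{C'-I}$ term by term uses $\frac{d^r}{dx^r}\,x^{C'+(m-1)I}=(C'+(m-1)I)(C'+(m-2)I)\cdots(C'+(m-r)I)\,x^{C'+(m-r-1)I}$, and then the identities $(C'-rI)_m=(C'-rI)_r(C')_{m-r}$ and $(I-C')_r=(-1)^r(C'-rI)_r$ convert $(C')_m^{-1}(C'+(m-1)I)\cdots(C'+(m-r)I)$ into $(-1)^r(C'-rI)_m^{-1}(I-C')_r$, which is exactly the asserted right-hand side.

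I expect the main obstacle to be organizational rather than conceptual: keeping track of the order of the non-commuting matrix factors through each index shift and through the $r$-fold iteration, and verifying at every step that the only transpositions invoked are the ones licensed by the stated hypotheses ($AB=BA$ whenever a $B$-factor must cross an $(A\pm I)$-Pochhammer, $CC'=C'C$ whenever a $C$-factor must cross $(C')_m^{-1}$). As in Theorem~\ref{t5.1}, nothing genuinely new is required beyond this bookkeeping, so I would write out the $\partial_x$ case in full and state that the remaining five formulas follow by the same manipulations.
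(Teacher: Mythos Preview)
Your proposal is correct and follows exactly the approach the paper intends: the paper omits the proof of this theorem, stating only that it is ``similar to theorem~\ref{t5.1}'', and your plan---term-by-term differentiation of the series \eqref{3.4}, index shift, Pochhammer identities, and commutativity hypotheses to reorder factors---is precisely that. Your bookkeeping on the one-step identities (in particular the use of $(A)_{m-n-1}=-(I-A)_1^{-1}(A-I)_{m-n}$ for the $y$-derivative and the treatment of $(C')_m^{-1}x^{C'+(m-1)I}$ in the last formula) is accurate, and your identification of which commutativity hypothesis is invoked at each transposition matches the conditions stated in the theorem.
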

\begin{theorem}
	Let $A$, $B$, $C$, $C'$ and $C''$ be matrices in $\mathbb{C}^{r \times r}$. Then the Horn matrix function $H_2(A, B, C, C', C''; x, y)$ satisfies the following differential formulae
	\begin{align}
		&	\frac{\partial ^r}{\partial x^r} H_2(A, B, C, C', C''; x, y)\nonumber\\
		& =  (A)_r \, (B)_r \, H_2(A+rI, B+rI, C, C', C''+rI; x, y) \,   (C'')^{-1}_r,\quad AB = BA;
		\\[5pt]
		&	\frac{\partial ^r}{\partial y^r} H_2(A, B, C, C', C''; x, y)\nonumber\\
		& = (-1)^r \, (I-A)^{-1}_r   H_2(A-rI, B, C+rI, C'+rI, C''; x, y) \, (C)_r \, (C')_r, \nonumber\\
		& \qquad  CC' = C'C, CC'' = C''C, C'C'' = C''C';
		\\[5pt]
		& \left(x^2 \frac{\partial }{\partial x}\right)^r \ [x^{A+(r-1)I} H_2(A, B, C, C', C''; x, \frac{y}{x})]\nonumber\\
		&  = x^{A+rI} \, (A)_r \, H_2(A+rI, B, C, C', C''; x, \frac{y}{x});
		\\[5pt]
		& \left(x^2 \frac{\partial }{\partial x}\right)^r \ [x^{B+(r-1)I} H_2(A, B, C, C', C''; x, {y})]\nonumber\\
		&  = x^{B+rI} \, (B)_r \, H_2(A, B+rI, C, C', C''; x, {y}), \quad AB = BA;
		\\[5pt]
		&  \left(y^2 \frac{\partial }{\partial y}\right)^r \ [ H_2(A, B, C, C', C''; x, {y}) y^{C+(r-1)I}]\nonumber\\
		&  =  H_2(A, B, C + rI, C', C''; x, {y}) \,  y^{C+rI} \, (C)_r, \quad CC'= C'C, CC'' = C''C;
		\\[5pt]
		&   \frac{\partial^r }{\partial x^r} \ [ H_2(A, B, C, C', C''; x, {y}) x^{C''-I}]\nonumber\\
		&  =  (-1)^r \, H_2(A, B, C, C', C''-rI; x, {y}) \, (I-C'')_r \, x^{C''-(r+1)I}.
	\end{align}
\end{theorem}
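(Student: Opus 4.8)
The plan is to establish each of the six identities directly from the defining series \eqref{3.5}, exactly along the lines of Theorem~\ref{t5.1}. Thus I would differentiate $H_2(A,B,C,C',C'';x,y)=\sum_{m,n\ge 0}(A)_{m-n}(B)_m(C)_n(C')_n(C'')_m^{-1}\,x^m y^n/(m!\,n!)$ term by term, re-index the summation so that the monomial becomes $x^m y^n$ again, and then rewrite the coefficient using the Pochhammer shift relations $(P)_{k+1}=P\,(P+I)_k=(P+kI)(P)_k$, $(Q)_{k+1}^{-1}=(Q+I)_k^{-1}Q^{-1}$, together with $(A)_{\ell-1}=(A-I)^{-1}(A-I)_\ell=-(I-A)^{-1}(A-I)_\ell$ (the shift form of the reflection identity $(A)_{-n}=(-1)^n(I-A)_n^{-1}$ used in the excerpt). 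The commutation hypotheses $AB=BA$, $CC'=C'C$, $CC''=C''C$, $C'C''=C''C'$ enter only to let the scalar-like Pochhammer factors created by differentiation be slid to the left or to the right of $H_2$ into the positions displayed in the statement; none of them is needed where the statement imposes no condition.

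Concretely, for the first identity one differentiates in $x$, shifts $m\mapsto m+1$, and uses $(B)_{m+1}=B(B+I)_m$ and $(C'')_{m+1}^{-1}=(C''+I)_m^{-1}(C'')^{-1}$. Since $(C'')^{-1}$ occupies the extreme right-hand slot of every coefficient it factors out to the right, and since $AB=BA$ the factor $A$ commutes past $(B+I)_m$, so $A$ and $B$ collect at the extreme left; this gives $\partial_x H_2=A\,B\,H_2(A+I,B+I,C,C',C'';x,y)\,(C'')^{-1}$, i.e.\ the $r=1$ case, and iterating $r$ times (the $r=1$ step being legitimate at each stage because $AB=BA$ gives $(A+jI)(B+jI)=(B+jI)(A+jI)$) produces $(A)_r$, $(B)_r$ and $(C'')_r^{-1}$. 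The second identity is the same after the shift $n\mapsto n+1$: the relation $(A)_{m-n-1}=-(I-A)^{-1}(A-I)_{m-n}$ accounts for the factor $(-1)^r(I-A)_r^{-1}$ emerging on the left, while $(C)_{n+1}=C(C+I)_n$ and $(C')_{n+1}=C'(C'+I)_n$ throw off factors $C$ and $C'$ which the three commutation relations on $C,C',C''$ allow to be transported past $(C'')_m^{-1}$ to the extreme right, assembling $(C)_r(C')_r$.

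For the four remaining identities the mechanism is that after the indicated substitution all the $x$-dependence (resp.\ $y$-dependence) of each summand condenses into a single matrix power — $x^{A+(m-n)I}$ for the series in $(x,y/x)$, $x^{B+mI}$ for the series in $(x,y)$, $y^{C+nI}$ for the fifth identity, and $x^{C''+(m-1)I}$ (sitting at the far right next to $(C'')_m^{-1}$) for the last one — on which the operators act cleanly: $x^2\partial_x\,x^{P+kI}=(P+kI)\,x^{P+(k+1)I}$ and $\partial_x\,x^{P+kI}=(P+kI)\,x^{P+(k-1)I}$. The factor $(P+kI)$ is absorbed into the coefficient through $(P+kI)(P)_k=(P)_{k+1}$ (respectively $(C''+(m-1)I)(C'')_m^{-1}=(C'')_{m-1}^{-1}$ for the last identity, after which $(C'')_{m-1}^{-1}=(C''-I)_m^{-1}(C''-I)$ splits off the extra factor $-(I-C'')$); re-indexing then recognizes the result as $H_2$ with one parameter raised or lowered by $I$, and the $r$-fold statements follow from the $r=1$ case by induction on $r$. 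The commutation relations reappear here only to move $(P+kI)$ through the other Pochhammer blocks to reach the relevant one — e.g.\ for the fifth identity, $CC'=C'C$ and $CC''=C''C$ carry $C+nI$ past $(C')_n$ and $(C'')_m^{-1}$ so that it can merge with $(C)_n$.

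The one genuine difficulty is the noncommutative bookkeeping: one has to be sure that every Pochhammer factor produced by a differentiation lands on the correct side of $H_2$ and that the induction in $r$ closes, and this is precisely what the stated commutation relations deliver; nothing else is more than elementary Pochhammer algebra. With those rearrangements in hand the computations are mechanical, so in the write-up I would spell out only the $r=1$ case of the first identity (mirroring the proof of Theorem~\ref{t5.1}) and note that the remaining ones are obtained in the same fashion.
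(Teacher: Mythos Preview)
Your proposal is correct and follows exactly the approach the paper intends: the paper omits the proof of this theorem entirely, stating only that it is similar to Theorem~\ref{t5.1}, and your plan reproduces precisely that template---termwise differentiation of the series \eqref{3.5}, reindexing, Pochhammer shift identities, and use of the stated commutation relations to slide the resulting factors to the displayed positions. Your handling of the noncommutative bookkeeping (in particular the placement of $(C'')_r^{-1}$ on the right in the first identity, the three commutations needed to push $(C)_r(C')_r$ past $(C'')_m^{-1}$ in the second, and the $(C''-I)_m^{-1}(C''-I)$ split in the last) is accurate and in fact more explicit than anything the paper writes out.
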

\begin{theorem}
	Let $A$, $B$, $C$ be matrices in $\mathbb{C}^{r \times r}$. Then the Horn matrix function $H_3(A, B; C; x, y)$ satisfies the following differential formulae
	\begin{align}
		&	\frac{\partial ^r}{\partial x^r} H_3(A, B; C; x, y) =  (A)_{2r}  \, H_3(A+2rI, B; C+rI; x, y) \,   (C)^{-1}_r;
		\\[5pt]
		&	\frac{\partial ^r}{\partial y^r} H_3(A, B; C; x, y)\nonumber\\
		& = (A)_r  \,  H_3(A+rI, B+rI; C+rI; x, y) \, (B)_r \, (C)^{-1}_r, \quad BC = CB;
		\\[5pt]
		& \left(x^2 \frac{\partial }{\partial x}\right)^r \ [x^{A+(r-1)I} H_3(A, B; C; x^2, {y}{x})]\nonumber\\
		&  = x^{A+rI} \, (A)_r \, H_3(A+rI, B; C; x^2, {y}{x});
		\\[5pt]
		&  \left(y^2 \frac{\partial }{\partial y}\right)^r \ [ H_3(A, B; C; x, {y}) y^{B+(r-1)I}]\nonumber\\
		&  =  H_3(A, B+rI; C; x, {y}) \,  y^{B+rI} \, (B)_r, \quad BC = CB;
		\\[5pt]
		&   \frac{\partial^r }{\partial x^r} \ [ H_3(A, B; C; x, {x y}) x^{C-I}]  =  (-1)^r \, H_3(A, B; C-rI; x, {y}) \, x^{C-(r+1)I} \, (I-C)_r.
	\end{align}
\end{theorem}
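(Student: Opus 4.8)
The strategy is to prove all five identities by a single termwise computation, exactly in the style of the proof of Theorem~\ref{t5.1}. The analytic preliminaries are that $H_3(A,B;C;x,y)=\sum_{m,n\ge0}(A)_{2m+n}(B)_n(C)_{m+n}^{-1}x^m y^n/(m!\,n!)$ converges absolutely and locally uniformly on a polydisc, which one identifies by the method based on the quantities $\rho$ and $\sigma$ used for $G_1$; inside that polydisc the series and the auxiliary prefactors $x^{A+(r-1)I}$, $y^{B+(r-1)I}$ and $x^{C-I}$ may be multiplied together and differentiated term by term. The algebraic tools are three: the Pochhammer splitting identities $(A)_{k+1}=A\,(A+I)_k=(A)_k\,(A+kI)$ and $(C)_{k+1}^{-1}=(C+I)_k^{-1}\,C^{-1}$; the elementary rules $\dfrac{\partial^k}{\partial x^k}x^{M}=M(M-I)\cdots(M-(k-1)I)\,x^{M-kI}$ and $\bigl(x^2\tfrac{\partial}{\partial x}\bigr)^k x^{M}=(M)_k\,x^{M+kI}$ (and the $y$-analogue) for a matrix exponent $M$; and the fact, recorded in Section~2, that any two matrices that are holomorphic functions of one common matrix automatically commute.

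For the first identity I would differentiate the defining series once in $x$, reindex $m\mapsto m+1$, and write $(A)_{2m+n+2}=(A)_2(A+2I)_{2m+n}$ and $(C)_{m+n+1}^{-1}=(C+I)_{m+n}^{-1}(C)_1^{-1}$. Because $(A)_2$ is already the leftmost matrix of every term and $(C)_1^{-1}$ the rightmost, both leave the sum without any commutativity assumption, producing $(A)_2\,H_3(A+2I,B;C+I;x,y)\,(C)_1^{-1}$; applying this one-step rule $r$ times (equivalently, with $A$ replaced successively by $A,A+2I,A+4I,\dots$ and $C$ by $C,C+I,\dots$) and collapsing the telescoping products to $(A)_{2r}$ on the left and to $(C)_r^{-1}=(C+(r-1)I)^{-1}\cdots(C+I)^{-1}C^{-1}$ on the right gives the stated formula. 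The second identity is the same, differentiating in $y$ and reindexing $n\mapsto n+1$; now the splitting $(B)_{n+1}=B\,(B+I)_n$ contributes a factor $B$ that sits to the left of $(C)_{m+n}^{-1}$, and to fold $(B+I)_n$ into the third argument of an $H_3$ one must slide $B$ past $(C)_{m+n}^{-1}$ — this is precisely where the hypothesis $BC=CB$ is used. Iterating yields $(A)_r\,H_3(A+rI,B+rI;C+rI;x,y)\,(B)_r\,(C)_r^{-1}$.

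For the three identities built on the operators $x^2\partial/\partial x$, $y^2\partial/\partial y$ and on the prefactors, the guiding observation is that the prefactor is chosen so that in every term the exponent of $x$ (respectively $y$) equals the Pochhammer subscript $2m+n$ of $A$ (respectively the subscript $n$ of $B$). Applying $x^2\partial/\partial x$ (respectively $y^2\partial/\partial y$) then creates the factor $A+(2m+n)I$ (respectively $B+nI$), which, being a function of $A$ (respectively $B$), commutes with the $A$-block (respectively $B$-block) already present and merges via $(A)_{2m+n}(A+(2m+n)I)=(A)_{2m+n+1}=A(A+I)_{2m+n}$; the surplus overall power of the variable factors out in front, and the leftover series is again an $H_3$ with the first (respectively second) parameter advanced by $I$, whence the $r$-fold version follows on iterating and collecting $(A)_r$ (respectively $(B)_r$). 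The fifth identity runs the same mechanism through the $(C)_{m+n}^{-1}$ factor instead: $\partial/\partial x$ hitting $x^{C+(m+n-1)I}$ produces $C+(m+n-1)I$, which cancels the last factor of $(C)_{m+n}^{-1}$, turning it into $(C)_{m+n-1}^{-1}=(C-I)_{m+n}^{-1}(C-I)$; after $r$ applications one reaches $(C)_{m+n-r}^{-1}=(C-rI)_{m+n}^{-1}(C-rI)_r$, and the identity $(C-rI)_r=(-1)^r(I-C)_r$ supplies both the sign $(-1)^r$ and the trailing factor $(I-C)_r$.

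I expect the only real difficulty to be the non-commutative bookkeeping: for each identity one must fix, once and for all, on which side of the summation every constant matrix is extracted, and then verify that each rearrangement used along the way is justified either because the two matrices involved are holomorphic functions of a single matrix (hence commute for free) or by the explicitly stated relation $BC=CB$ of the second and fourth formulas; no commutativity hypothesis is needed for the first, third, or fifth. The analytic steps — termwise differentiation and resummation inside the polydisc of convergence — and the combinatorics of the Pochhammer shifts are entirely routine, precisely as in Theorem~\ref{t5.1}.
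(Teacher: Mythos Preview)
Your proposal is correct and follows precisely the approach the paper indicates: the paper omits the proof entirely, saying only that it is ``similar to Theorem~\ref{t5.1}'', and your termwise differentiation, reindexing, Pochhammer splitting, and iteration reproduce that argument for $H_3$ with the same care about where commutativity ($BC=CB$) is actually needed. If anything, your treatment of the non-commutative bookkeeping is more explicit than what the paper provides for $G_1$.
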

\begin{theorem}
	Let $A$, $B$, $C$ and $C'$ be matrices in $\mathbb{C}^{r \times r}$. Then the Horn matrix function $H_4(A, B; C, C'; x, y)$ satisfies the following differential formulae
	\begin{align}
		&	\frac{\partial ^r}{\partial x^r} H_4(A, B; C, C'; x, y)\nonumber\\
		& =  (A)_{2r}  \, H_4(A+2rI, B; C+rI, C'; x, y) \,   (C)^{-1}_r;
		\\[5pt]
		&	\frac{\partial ^r}{\partial y^r} H_4(A, B; C, C'; x, y)\nonumber\\
		& = (A)_r \, (B)_r \,  H_4(A+rI, B+rI; C, C'+rI; x, y) \,  (C')_r, \quad AB = BA;
		\\[5pt]
		& \left(x^2 \frac{\partial }{\partial x}\right)^r \ [x^{A+(r-1)I} H_4(A, B; C, C'; x^2, {y}{x})]\nonumber\\
		&  = x^{A+rI} \, (A)_r \, H_4(A+rI, B; C, C'; x^2, {y}{x});
		\\[5pt]
		&  \left(y^2 \frac{\partial }{\partial y}\right)^r \ [  y^{B+(r-1)I} \, H_4(A, B; C, C'; x, {y})]\nonumber\\
		&  = y^{B+rI} (B)_r \, H_4(A, B+rI; C, C'; x, {y}), \quad AB = BA;
		\\[5pt]
		&   \frac{\partial^r }{\partial x^r} \ [ H_4(A, B; C, C'; x, {y}) x^{C-I}]\nonumber\\
		&  =  (-1)^r \, H_4(A, B; C-rI, C'; x, {y}) \, x^{C-(r+1)I} \,  (I-C)_r, \quad CC' = C'C;
		\\[5pt]
		&   \frac{\partial^r }{\partial y^r} \ [ H_4(A, B; C, C'; x, {y}) y^{C'-I}]\nonumber\\
		&  =  (-1)^r \, H_4(A, B; C, C'-rI; x, {y}) \, y^{C'-(r+1)I} \,  (I-C')_r.
	\end{align}
\end{theorem}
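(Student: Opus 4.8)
The plan is to imitate the proof of Theorem~\ref{t5.1} for each of the six identities: differentiate the defining series \eqref{3.7} term by term (legitimate on compact subsets of the region of absolute convergence of $H_4$, which is the same as in the scalar case, so the usual justification for differentiating a matrix power series applies verbatim), reindex the summation so the surviving factor is re-absorbed into the general coefficient, and then use the elementary Pochhammer relations
\[
(M)_{k+j}=(M)_j\,(M+jI)_k,\qquad (M)_{k+j}^{-1}=(M+jI)_k^{-1}(M)_j^{-1},\qquad (M+kI)(M)_k=M(M+I)_k
\]
together with the stated commutativity of the parameters to slide the resulting constant matrices to the outer ends and recognise the remaining series as an $H_4$ with shifted parameters.

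For the two ordinary derivatives: differentiating $\partial_x^{\,r}$ and reindexing $m\mapsto m+r$ turns the coefficient of $x^m y^n/m!\,n!$ into $(A)_{2m+2r+n}(B)_n(C)_{m+r}^{-1}(C')_n^{-1}$; splitting $(A)_{2m+2r+n}=(A)_{2r}(A+2rI)_{2m+n}$, where the block $(A)_{2r}$ already occupies the extreme left and factors straight out of the sum, and $(C)_{m+r}^{-1}=(C+rI)_m^{-1}(C)_r^{-1}$, and then moving the trailing block $(C)_r^{-1}$ past $(C')_n^{-1}$, leaves $(A)_{2r}\,H_4(A+2rI,B;C+rI,C';x,y)\,(C)_r^{-1}$. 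For $\partial_y^{\,r}$ one reindexes $n\mapsto n+r$, so the coefficient becomes $(A)_{2m+n+r}(B)_{n+r}(C)_m^{-1}(C')_{n+r}^{-1}$; writing $(A)_{2m+n+r}=(A)_r(A+rI)_{2m+n}$, $(B)_{n+r}=(B)_r(B+rI)_n$, $(C')_{n+r}^{-1}=(C'+rI)_n^{-1}(C')_r^{-1}$ and using $AB=BA$ to slide $(B)_r$ leftward past $(A+rI)_{2m+n}$ merges the two leading blocks into $(A)_r(B)_r$ while leaving $(C')_r^{-1}$ at the far right, which produces $H_4(A+rI,B+rI;C,C'+rI;x,y)$.

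For the two Euler-operator formulas, the whole content lies in the case $r=1$, after which one iterates with shifted parameters exactly as in Theorem~\ref{t5.1}. When $x^2\frac{\partial}{\partial x}$ acts on $x^{A+(r-1)I}H_4(A,B;C,C';x^2,xy)$, the substitutions $x^2,\,xy$ make the $x$-exponent of the $(m,n)$ term equal to $2m+n$, which matches the Pochhammer index of $(A)_{2m+n}$; one application of the operator brings down the factor $(A+(2m+n)I)$, the identity $(A+(2m+n)I)(A)_{2m+n}=A(A+I)_{2m+n}$ collapses it, and resummation gives $x^{A+I}(A)_1\,H_4(A+I,B;C,C';x^2,xy)$. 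The $y$-Euler formula is the same computation with the prefactor $y^{B+(r-1)I}$ and $(B)_n$ in the role of the $A$-data, now needing $AB=BA$ to commute the emerging $B$-block past $(A)_{2m+n}$. For the last two formulas one attaches the power of $x$ (resp.\ $y$) on the right: in $H_4(A,B;C,C';x,y)\,x^{C-I}$ the scalar $x^m$ merges with $x^{C-I}$ into $x^{C+(m-1)I}$, so $\partial_x$ brings down $(C+(m-1)I)$, and — after moving it past $(C')_n^{-1}$ via $CC'=C'C$ — the relations $(C-I)_m=(C-I)(C)_{m-1}$ and $(C-I)^{-1}=-(I-C)^{-1}$ convert $(C)_m^{-1}(C+(m-1)I)$ into the shift $C\mapsto C-I$ together with the trailing factor $-(I-C)$; iterating $r$ times and collecting $(I-C)\big((I-C)+I\big)\cdots\big((I-C)+(r-1)I\big)=(I-C)_r$ yields the stated identity, and the $y^{C'-I}$ version is identical in the $y$-variable, now with $(C')_n^{-1}$ already rightmost so no commutativity is needed.

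None of these computations is deep; the only thing to watch is the non-commutative bookkeeping. At every step one must check that the constant matrix being pushed to an outer end genuinely commutes with the intervening factors, and this is precisely where the listed hypotheses $AB=BA$ and $CC'=C'C$ enter — most delicately in the first and fifth identities, where the $C$-block peeled off $(C)_{m+r}^{-1}$ (resp.\ the factor $I-C$) has to be commuted past $(C')_n^{-1}$ before the remaining series can be identified as $H_4$ with shifted parameters. Keeping the $2m+n$ index of $(A)$ correctly aligned with the $x$-exponent under the substitutions $(x^2,xy)$ in the Euler-operator cases is the other point where a sign or index slip is easiest to make.
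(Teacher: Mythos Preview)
Your proof is correct and follows exactly the paper's approach, which simply refers back to Theorem~\ref{t5.1} and omits all details. In fact you have been more careful than the paper: the hypothesis $CC'=C'C$ you flag for the first identity is genuinely needed there (the paper lists it only for the fifth), and your trailing factor $(C')_r^{-1}$ in the second identity corrects the paper's evident misprint $(C')_r$.
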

\begin{theorem}
	Let $A$, $B$ and $C$ be matrices in $\mathbb{C}^{r \times r}$. Then the Horn matrix function $H_5(A, B; C; x, y)$ satisfies the following differential formulae
	\begin{align}
		&	\frac{\partial ^r}{\partial x^r} H_5(A, B; C; x, y)\nonumber\\
		& =  (-1)^r (A)_{2r} \, (I-B)_r^{-1}\, H_5(A+2rI, B-rI; C; x, y), \quad AB = BA;
		\\[5pt]
		&	\frac{\partial ^r}{\partial y^r} H_5(A, B; C; x, y)\nonumber\\
		& = (A)_r \, (B)_r \,  H_5(A+rI, B+rI; C+rI; x, y) \,  (C)^{-1}_r, \quad AB = BA;
		\\[5pt]
		& \left(x^2 \frac{\partial }{\partial x}\right)^r \ [x^{A+(r-1)I} H_5(A, B; C; x^2, {y}{x})]\nonumber\\
		&  = x^{A+rI} \, (A)_r \, H_5(A+rI, B; C; x^2, {y}{x});
		\\[5pt]
		&  \left(y^2 \frac{\partial }{\partial y}\right)^r \ [  y^{B+(r-1)I} \, H_5(A, B; C; \frac{x}{y}, {y})]\nonumber\\
		&  = y^{B+rI} (B)_r \, H_5(A, B+rI; C; \frac{x}{y}, {y}), \quad AB = BA;
		\\[5pt]
		&   \frac{\partial^r }{\partial y^r} \ [ H_5(A, B; C; x, {y}) y^{C-I}]\nonumber\\
		&  =  (-1)^r \, H_5(A, B; C-rI; x, {y}) \, y^{C-(r+1)I} \,  (I-C)_r.
	\end{align}
\end{theorem}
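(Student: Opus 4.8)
The plan is to run, for each of the five identities, the same termwise argument used in the proof of Theorem~\ref{t5.1}: differentiate the defining series \eqref{3.8}, shift the summation index so as to restore a clean power of the variable being differentiated, rewrite the resulting Pochhammer factors by elementary identities, use the stated commutativity hypothesis to move the constant matrices to the outside of the sum, recognize an $H_5$ with shifted parameters, and then iterate $r$ times.

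First I would treat $\partial^r/\partial x^r$. For $r=1$, differentiating \eqref{3.8} and replacing $m$ by $m+1$ produces the coefficient $(A)_{2m+n+2}\,(B)_{n-m-1}\,(C)_n^{-1}$ against $x^m y^n/(m!\,n!)$. Writing $(A)_{2m+n+2}=(A)_2\,(A+2I)_{2m+n}$ and $(B)_{n-m-1}=-(I-B)_1^{-1}(B-I)_{n-m}$ --- both legitimate because a matrix Pochhammer symbol is a polynomial in that matrix, so all factors built out of $B$ commute with one another and with $(I-B)_1^{-1}$ --- and then pushing $(A)_2$ leftward past $(I-B)_1^{-1}$ by means of $AB=BA$, the sum collapses to $-(A)_2(I-B)_1^{-1}\,H_5(A+2I,B-I;C;x,y)$. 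Applying this $r$ times and collecting the accumulated factors, which commute, being rational functions of $A$ or of $B$, into $(A)_{2r}$ and $(I-B)_r^{-1}$, gives the first identity. The $\partial^r/\partial y^r$ identity is the same calculation carried out on the index $n$: now one uses $(A)_{2m+n+1}=A\,(A+I)_{2m+n}$, $(B)_{n-m+1}=B\,(B+I)_{n-m}$ and $(C)_{n+1}^{-1}=(C+I)_n^{-1}C^{-1}$, and $AB=BA$ is needed to carry the freed factor $B$ to the front past $(A+I)_{2m+n}$.

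For the three operator identities I would first expand $H_5$ at the rescaled arguments so that the exponent of the variable being differentiated equals the relevant Pochhammer subscript. Thus $H_5(A,B;C;x^2,xy)=\sum (A)_{2m+n}(B)_{n-m}(C)_n^{-1}\,x^{2m+n}y^n/(m!\,n!)$, so after multiplying by $x^{A+(r-1)I}$ and applying $x^2\,\partial/\partial x$ the basic identity $(A+kI)(A)_k=A\,(A+I)_k$ with $k=2m+n$ turns $(A+(2m+n)I)(A)_{2m+n}$ into $A\,(A+I)_{2m+n}$, the power of $x$ being carried through the operator; here no commutativity is required because the freed factor $A=(A)_1$ already sits on the far left, and iteration yields the third identity. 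The $y$-operator identity comes out the same way from $H_5(A,B;C;x/y,y)=\sum (A)_{2m+n}(B)_{n-m}(C)_n^{-1}\,x^m y^{n-m}/(m!\,n!)$; the one new feature is that the factor $B+(n-m)I$ produced by differentiating the prefactor $y^{B+\cdots}$ must be transported past $(A)_{2m+n}$ before it can be absorbed into $(B)_{n-m}$ and then carried back out to the left, and that is precisely where the hypothesis $AB=BA$ enters. The last identity is the same device applied to the prefactor $y^{C-I}$: differentiating $y^{C+(n-1)I}$ contributes a factor $C+(n-1)I$, one has $(C)_n^{-1}(C+(n-1)I)=(C)_{n-1}^{-1}=-(C-I)_n^{-1}(I-C)_1$, and $(I-C)_1$, being a function of $C$, commutes with the surviving power of $y$ and so can be pushed to the extreme right; iterating and reordering the commuting factors $(I-C)_1,(2I-C)_1,\dots$ into $(I-C)_r$ finishes it.

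The one thing that genuinely needs care is the noncommutative bookkeeping: each scalar Pochhammer identity must be inserted in the correct slot of the product $(A)_{2m+n}(B)_{n-m}(C)_n^{-1}$, and one must verify that every transposition of factors needed to bring the constant matrices outside the summation is licensed by the stated hypothesis --- $AB=BA$ for the $x$-derivative, the $y$-derivative and the rescaled $y$-formula, and no hypothesis for the two remaining identities. I would also record at the outset the convention $(B)_{-j}=(-1)^j(I-B)_j^{-1}$ and observe that every Pochhammer identity used above persists verbatim for negative (matrix-shifted) subscripts, each being an identity between rational functions of a single matrix; with this in hand the whole proof is the routine termwise computation exhibited in the proof of Theorem~\ref{t5.1}, so only these indications need be given.
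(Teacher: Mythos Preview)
Your proposal is correct and follows exactly the approach the paper intends: the paper omits the proof of this theorem entirely, stating only that it is ``similar to theorem~\ref{t5.1},'' and you carry out precisely that template---termwise differentiation of the series \eqref{3.8}, index shift, the standard Pochhammer identities $(A)_{k+j}=(A)_j(A+jI)_k$ and $(B)_{k-1}=-(I-B)_1^{-1}(B-I)_k$, and iteration---with the added care of tracking where the hypothesis $AB=BA$ is actually consumed. Your commutativity bookkeeping (needed for identities one, two, and four; not needed for three and five) matches the statement exactly, so nothing further is required.
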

\begin{theorem}
	Let $A$, $B$ and $C$ be matrices in $\mathbb{C}^{r \times r}$. Then the Horn matrix function $H_6(A, B; C; x, y)$ satisfies the following differential formulae
	\begin{align}
		&	\frac{\partial ^r}{\partial x^r} H_6(A, B; C; x, y)\nonumber\\
		& =  (-1)^r (A)_{2r} \, (I-B)_r^{-1}\, H_6(A+2rI, B-rI; C; x, y), \quad AB = BA;
		\\[5pt]
		&	\frac{\partial ^r}{\partial y^r} H_6(A, B; C; x, y)\nonumber\\
		& = (-1)^r (I-A)^{-1}_r \, (B)_r \,  H_6(A-rI, B+rI; C+rI; x, y) \,  (C)_r, \quad AB = BA;
		\\[5pt]
		& \left(x^2 \frac{\partial }{\partial x}\right)^r \ [x^{A+(r-1)I} H_6(A, B; C; x^2, \frac{y}{x})]\nonumber\\
		&  = x^{A+rI} \, (A)_r \, H_6(A+rI, B; C; x^2, \frac{y}{x});
		\\[5pt]
		&  \left(y^2 \frac{\partial }{\partial y}\right)^r \ [  y^{B+(r-1)I} \, H_6(A, B; C; \frac{x}{y}, {y})]\nonumber\\
		&  = y^{B+rI} (B)_r \, H_6(A, B+rI; C; \frac{x}{y}, {y}), \quad AB = BA;
		\\[5pt]
		&   \left(y^2 \frac{\partial }{\partial y}\right)^r \ [ H_6(A, B; C; x, {y}) y^{C+(r-1)I}]\nonumber\\
		&  =   \, H_6(A, B; C+rI; x, {y}) \, y^{C+rI} \,  (C)_r.
	\end{align}
\end{theorem}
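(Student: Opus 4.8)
The plan is to prove each of the five identities by the two-stage argument already used for $G_1$ in Theorem~\ref{t5.1}: first verify the $r=1$ instance by differentiating the defining series \eqref{3.9} term by term, and then obtain the case of arbitrary $r$ by iterating that single step.

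For the two ``straight'' derivative formulas I would differentiate \eqref{3.9} once, absorb the resulting factor $m$ (resp.\ $n$) by shifting the summation index $m\mapsto m+1$ (resp.\ $n\mapsto n+1$), and then rewrite each Pochhammer symbol whose index has changed using the elementary identities $(M)_{k+1}=(M)_1(M+I)_k$, $(M)_{k+2}=(M)_2(M+2I)_k$ and $(M)_{k-1}=-(I-M)_1^{-1}(M-I)_k$, all valid for every integer $k$ through the reciprocal-gamma definition \eqref{eq.07} together with $(M)_{-j}=(-1)^j(I-M)_j^{-1}$. After these substitutions the summand is, up to constant factors on the left and on the right, the summand of $H_6$ with $A\mapsto A+2I$, $B\mapsto B-I$ (resp.\ $A\mapsto A-I$, $B\mapsto B+I$, $C\mapsto C+I$); pulling the constants $(A)_2$ and $(I-B)_1^{-1}$ (resp.\ $(I-A)_1^{-1}$, $(B)_1$ and $(C)_1$, the last on the right) out of the sum gives the $r=1$ statement. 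This is exactly where the hypothesis $AB=BA$ is needed: it lets one commute the $B$-type factor past the polynomial $(A+2I)_{2m-n}$ (resp.\ the $A$-type factor past $(B+I)_{n-m}$), whereas two polynomials in a single matrix commute for free. Iterating $r$ times and telescoping via $(M)_j(M+jI)_k=(M)_{j+k}$, together with the reversed-order version for the inverse factors, e.g.\ $(I-B)_1^{-1}(I-(B-I))_1^{-1}\cdots(I-(B-(r-1)I))_1^{-1}=(I-B)_r^{-1}$ (using that these factors commute), produces the asserted $(A)_{2r}$, $(I-B)_r^{-1}$, $(I-A)_r^{-1}$, $(B)_r$ and $(C)_r$.

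For the three formulas built on $x^2\partial_x$ or $y^2\partial_y$ I would first perform the indicated substitution so that the monomial $x^my^n$ collapses to a single power carrying the index matching the relevant Pochhammer symbol; for instance, under $x\mapsto x^2$, $y\mapsto y/x$ one gets $x^{A}H_6(A,B;C;x^2,y/x)=\sum (A)_{2m-n}\,x^{A+(2m-n)I}(B)_{n-m}(C)_n\,y^n/(m!\,n!)$, using that $(A)_{2m-n}$ commutes with $x^{A}$. The operator $x^2\partial_x$ acts on a power by $x^2\partial_x\,x^{M+kI}=(M+kI)x^{M+(k+1)I}$, and combining this with the identity $(A+(2m-n)I)(A)_{2m-n}=(A)_1(A+I)_{2m-n}$ --- already exploited around \eqref{e5.8} --- reproduces $H_6$ with $A$ replaced by $A+I$ and an extra left factor $(A)_1$; iterating yields $(A)_r$. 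The $y^2\partial_y$ computations, one with the substitution $(x/y,y)$ and one with the factor $y^{C+(r-1)I}$ placed on the right, are identical in spirit and give $(B)_r$ and $(C)_r$ respectively; the last needs no commutativity hypothesis because $y^{C}$ and $(C)_n$ commute, whereas the $(x/y,y)$ case again uses $AB=BA$ to slide $y^{B}$ past $(A)_{2m-n}$.

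The bookkeeping --- index shifts, splittings and concatenations of Pochhammer symbols --- is routine. The only genuinely delicate point, and the main thing one must watch, is the ordering of the matrix factors at each reshuffling: one has to check that every interchange is licensed either by the stated commutativity ($AB=BA$ where it appears) or by the automatic commutativity of two polynomials in the same matrix, and that the $r$-fold iteration assembles the inverse Pochhammer symbols in the correct (reversed) order. Since the mechanism is precisely that of Theorem~\ref{t5.1}, I expect no obstacle beyond this careful tracking of the non-commutative rearrangements.
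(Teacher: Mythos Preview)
Your proposal is correct and follows essentially the same approach as the paper, which omits the proof with the remark that it is analogous to Theorem~\ref{t5.1}: term-by-term differentiation of the defining series \eqref{3.9}, index shifts, the Pochhammer identities $(M)_{k+1}=(M)_1(M+I)_k$ and $(M)_{k-1}=-(I-M)_1^{-1}(M-I)_k$, and iteration. Your explicit tracking of where the hypothesis $AB=BA$ is actually invoked (to slide $B$-factors past polynomials in $A$, or $y^{B}$ past $(A)_{2m-n}$) is in fact more careful than what the paper spells out.
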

\begin{theorem}
	Let $A$, $B$, $C$ and $C'$ be matrices in $\mathbb{C}^{r \times r}$. Then the Horn matrix function $H_7(A, B, C; C'; x, y)$ satisfies the following differential formulae
	\begin{align}
		&	\frac{\partial ^r}{\partial x^r} H_7(A, B, C; C'; x, y)\nonumber\\
		& =  (A)_{2r}  \, H_7(A+2rI, B, C; C'+rI; x, y) \,   (C')^{-1}_r;
		\\[5pt]
		&	\frac{\partial ^r}{\partial y^r} H_7(A, B, C; C'; x, y)\nonumber\\
		& = (-1)^r \, (I-A)^{-1}_r  (B)_r\nonumber\\
		& \quad \times   H_7(A-rI, B+rI, C+rI; C'; x, y) \, (C)_r, \quad AB + BA,  CC' = C'C;
		\\[5pt]
		& \left(x^2 \frac{\partial }{\partial x}\right)^r \ [x^{A+(r-1)I} H_7(A, B, C; C'; x^2, \frac{y}{x})]\nonumber\\
		&  = x^{A+rI} \, (A)_{r} \, H_7(A+rI, B, C; C'; x^2, \frac{y}{x});
		\\[5pt]
		& \left(y^2 \frac{\partial }{\partial y}\right)^r \ [x^{B+(r-1)I} H_7(A, B, C; C'; x, {y})]\nonumber\\
		&  = x^{B+rI} \, (B)_r \, H_7(A, B+rI, C; C'; x, {y}), \quad AB = BA;
		\\[5pt]
		&  \left(y^2 \frac{\partial }{\partial y}\right)^r \ [ H_7(A, B, C; C'; x, {y}) y^{C+(r-1)I}]\nonumber\\
		&  =  H_7(A, B, C + rI; C'; x, {y}) \,  y^{C+rI} \, (C)_r, \quad CC'= C'C;
		\\[5pt]
		&   \frac{\partial^r }{\partial x^r} \ [ H_7(A, B, C; C'; x, {y}) x^{C'-I}]\nonumber\\
		&  =  (-1)^r \, H_7(A, B, C; C'-rI; x, {y}) \, x^{C'-(r+1)I} \, (I-C')_r.
	\end{align}
\end{theorem}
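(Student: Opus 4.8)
The plan is to prove each of the six identities by the mechanism used for $G_1$ in Theorem~\ref{t5.1}: differentiate the defining series \eqref{3.10} of $H_7$ termwise, reindex the summation, and collapse the shifted indices by elementary Pochhammer relations, keeping track of the side on which each matrix factor must land.

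For the first identity, I would differentiate $H_7(A,B,C;C';x,y)=\sum_{m,n\ge 0}(A)_{2m-n}(B)_n(C)_n(C')_m^{-1}\frac{x^m y^n}{m!\,n!}$ once in $x$ and shift $m\mapsto m+1$; the net effect replaces $(A)_{2m-n}$ by $(A)_{2m-n+2}=(A)_2(A+2I)_{2m-n}$ and $(C')_m^{-1}$ by $(C')_{m+1}^{-1}=(C'+I)_m^{-1}(C')^{-1}$. Since $(C'+I)_m^{-1}$ and $(C')^{-1}$ are both functions of $C'$ they commute, so $(C')^{-1}$ can be moved to the far right with no extra hypothesis while $(A)_2$ is already at the far left; iterating $r$ times with $(A)_{2m-n+2r}=(A)_{2r}(A+2rI)_{2m-n}$ and $(C')_{m+r}^{-1}=(C'+rI)_m^{-1}(C')_r^{-1}$ gives the stated formula. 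For the second identity I would differentiate in $y$ and shift $n\mapsto n+1$, using $(A)_{2m-n-1}=-(I-A)^{-1}(A-I)_{2m-n}$, $(B)_{n+1}=B(B+I)_n$ and $(C)_{n+1}=(C+I)_nC$; after $r$ iterations one has $(A)_{2m-n-r}=(-1)^r(I-A)_r^{-1}(A-rI)_{2m-n}$, $(B)_{n+r}=(B)_r(B+rI)_n$ and $(C)_{n+r}=(C+rI)_n(C)_r$, and the claimed arrangement $(-1)^r(I-A)_r^{-1}(B)_r\,H_7(A-rI,B+rI,C+rI;C';x,y)\,(C)_r$ emerges once $(B)_r$ is commuted past $(A-rI)_{2m-n}$ and $(C)_r$ past $(C')_m^{-1}$ — which is exactly where the hypotheses $AB=BA$ and $CC'=C'C$ are needed.

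For the four operator identities I would use the single-variable fact $\left(x^2\frac{\partial}{\partial x}\right)\big[x^{M+kI}\big]=(M+kI)\,x^{M+(k+1)I}$ together with $(M+kI)(M)_k=M(M+I)_k$, applied with $M$ one of the parameter matrices and $k$ the exponent attached to the Pochhammer symbol being shifted. The argument rescalings ($x\mapsto x^2$, $y\mapsto y/x$, and the analogous ones) are chosen precisely so that this exponent is a pure monomial $x^{2m-n}$, $x^m$, or $x^n$; then one application of the operator releases a single factor $(M)_1$ on the appropriate side, the residual power of $x$ is reabsorbed into the rescaled argument, and $r$ iterations produce $(M)_r$. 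The final identity is the same computation carried out against the fixed factor $x^{C'-I}$ rather than a rescaling: here $\frac{\partial}{\partial x}$ acts on $x^{C'+(m-1)I}$, and the collapse rests on $(C')_m^{-1}(C'+(m-1)I)=(C')_{m-1}^{-1}=(C'-I)(C'-I)_m^{-1}$, all factors being functions of $C'$, so again no extra hypothesis intervenes.

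The only genuine obstacle is the bookkeeping of the non-commuting matrix factors: every peeled-off Pochhammer symbol must be routed to the side on which it appears in the asserted identity, and the stated conditions $AB=BA$ and $CC'=C'C$ are exactly the minimal assumptions that license those transpositions. Once they are justified, the remaining manipulations are the routine reindexing already performed for $G_1$, so we omit the details.
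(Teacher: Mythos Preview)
Your proposal is correct and follows exactly the approach the paper intends: the paper omits the proof entirely, stating only that it is ``similar to theorem~\ref{t5.1}'', and your outline is precisely a careful execution of that method for each of the six formulae, including the correct identification of where the commutativity hypotheses $AB=BA$ and $CC'=C'C$ enter. There is nothing to add.
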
 
\begin{theorem}\label{t6.1}
	Let $A$, $B$ and $B'$ be matrices in $\mathbb{C}^{r \times r}$. Then the matrix function $\Gamma_1(A, B, B'; x, y)$ satisfies the following differential formulae
	\begin{align}
		&	\frac{\partial ^r}{\partial x^r} \Gamma_1(A, B, B'; x, y)\nonumber\\
		& = (-1)^r \, (A)_r \, \Gamma_1(A+rI, B-rI, B'+rI; x, y) \, (I-B)_r^{-1} \, (B')_r, \quad BB' = B'B;
		\\[5pt]
		&	\frac{\partial ^r}{\partial y^r} \Gamma_1(A, B, B'; x, y)\nonumber\\
		& = (-1)^r  \, \Gamma_1(A, B+rI, B' - rI; x, y) \, (B)_r \, (I-B')_r^{-1}, \quad BB' = B'B;
		\\[5pt]
		& \left(x^2 \frac{\partial }{\partial x}\right)^r \ [x^{A+(r-1)I} \Gamma_1(A, B, B'; x, y)]  = x^{A+rI} \, (A)_r \, \Gamma_1(A+rI, B, B'; x, y);
		\\[5pt]
		& \left(x^2 \frac{\partial }{\partial x}\right)^r \ [ \Gamma_1(A, B, B'; x, \frac{y}{x}) \, x^{B'+(r-1)I}]  =  \Gamma_1(A, B, B'+rI; x, \frac{y}{x}) \ x^{B'+rI} \, (B')_r;
		\\[5pt]
		& \left(y^2 \frac{\partial }{\partial y}\right)^r \ [\Gamma_1(A, B, B'; \frac{x}{y}, y) \, y^{B + (r-1)I}]  =  \Gamma_1(A, B+rI, B'; \frac{x}{y}, y) \, y^{B+rI} \, (B)_r.
	\end{align}
\end{theorem}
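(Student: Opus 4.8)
The plan is to prove all five identities by the method already used for $G_1$ in Theorem~\ref{t5.1}: differentiate the defining series \eqref{5.1} term by term and re-index. This carries over without change because the general coefficient of $\Gamma_1$ differs from that of $G_1$ only by having $(A)_m$ in place of $(A)_{m+n}$, while the factors $(B)_{n-m}$ and $(B')_{m-n}$ that the $x$- and $y$-derivatives actually shift are the same in both series. An alternative, equally short route is to start from the corresponding formulae for $G_2$ and pass to the confluent limit \eqref{eq1.13}, using $\lim_{\varepsilon\to0}\left(\tfrac1\varepsilon I\right)_m\varepsilon^m=I$; this also makes transparent why the $\partial_y^r$-formula for $\Gamma_1$ carries no $(A)_r$ on the left, unlike the $G_2$ analogue, since the factor that would appear is $\left(\tfrac1\varepsilon I\right)_r\varepsilon^r\to I$.

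For the two ordinary-derivative formulae, I would differentiate \eqref{5.1} once in $x$ (respectively $y$): the $m=0$ (respectively $n=0$) term drops, and after the shift $m\mapsto m+1$ (respectively $n\mapsto n+1$) the coefficient becomes $(A)_{m+1}(B)_{n-m-1}(B')_{m-n+1}$ (respectively $(A)_m(B)_{n-m+1}(B')_{m-n-1}$). Then I apply the Pochhammer recurrences $(A)_{m+1}=A\,(A+I)_m$; $(B')_{m-n+1}=(B'+I)_{m-n}(B')_1$ and $(B)_{n-m+1}=(B+I)_{n-m}(B)_1$; and $(B)_{n-m-1}=-(I-B)_1^{-1}(B-I)_{n-m}$, $(B')_{m-n-1}=-(I-B')_1^{-1}(B'-I)_{m-n}$, the last two via the identity $(A)_{-n}=(-1)^n(I-A)_n^{-1}$ already used for \eqref{3.1.0}. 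Invoking $BB'=B'B$, so that any matrix function of $B$ commutes with every matrix function of $B$ and of $B'$, I slide the $B$- and $B'$-valued factors to the right of the summation, where they collapse to $(I-B)_1^{-1}(B')_1$ (respectively $(B)_1(I-B')_1^{-1}$), while the $A$-valued factor $(A)_1$ (respectively nothing) stays on the left. This settles the case $r=1$; composing the same step $r$ times, with the three matrix parameters replaced by their shifted values at each stage, and telescoping the shifted-factorial products, yields the two stated identities.

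For the three operator formulae the engine is the elementary identity $x^2\frac{\partial}{\partial x}\left(x^{C}g\right)=x^{C+I}\left(C+x\tfrac{\partial}{\partial x}\right)g$ for a constant matrix $C$ (valid because $C$ commutes with $x^{C}$), together with the collapsing identities $(A+mI)(A)_m=A\,(A+I)_m$, $(B'+(m-n)I)(B')_{m-n}=(B'+I)_{m-n}(B')_1$ and $(B+(n-m)I)(B)_{n-m}=(B+I)_{n-m}(B)_1$. Inserting the substituted arguments turns the monomial in the general term into $x^m$ (for the formula with prefactor $x^{A+\cdots}$ and argument $(x,y)$), into $x^{m-n}$ (prefactor $x^{B'+\cdots}$, argument $(x,y/x)$), or into $y^{n-m}$ (the $y$-operator formula, argument $(x/y,y)$), so that the operator meets exactly $(A)_m$, $(B')_{m-n}$, or $(B)_{n-m}$; the appropriate collapsing identity then finishes the $r=1$ step, and $r$-fold composition with the relevant parameter shifted at each stage, followed by telescoping, gives the result.

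The only point that really requires care, and what I would treat as the main obstacle, is the non-commutative bookkeeping. Since only $BB'=B'B$ is available and $A$ is unrestricted, a matrix function of $A$ (in particular each power $x^{A+kI}$ or $y^{A+kI}$) commutes with nothing but other functions of $A$, whereas functions of $B$ and of $B'$ together form a single commuting family. One must therefore keep every $A$-valued factor to the left of $\Gamma_1$ and carry every $B$- or $B'$-valued factor to the precise slot on the right prescribed in the statement, and at each of the $r$ iterated steps check both that the relabelled Pochhammer products telescope correctly and that $BB'=B'B$ is invoked only where a $B$-block and a $B'$-block genuinely have to pass each other. Once this ordering discipline is imposed, every remaining step is a routine shifted-factorial manipulation.
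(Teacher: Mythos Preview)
Your proposal is correct and follows precisely the route the paper takes: the paper gives no separate proof of this theorem, stating only that the proofs of all the remaining differential formulae ``are similar to theorem~\ref{t5.1} so we omit them,'' and your argument is exactly the term-by-term differentiation and re-indexing of Theorem~\ref{t5.1} adapted to the coefficient $(A)_m(B)_{n-m}(B')_{m-n}$ of $\Gamma_1$. Your careful tracking of where $A$-valued versus $B,B'$-valued factors must sit, and your remark on the alternative confluent-limit derivation from the $G_2$ formulae via \eqref{eq1.13}, go beyond what the paper spells out but are entirely in its spirit.
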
 
\begin{theorem}
	Let $B$ and $B'$ be matrices in $\mathbb{C}^{r \times r}$. Then the matrix function $\Gamma_2( B, B'; x, y)$ satisfies the following differential formulae
	\begin{align}
		&	\frac{\partial ^r}{\partial x^r} \Gamma_2( B, B'; x, y) = (-1)^r \, (I-B)_r^{-1} \, \Gamma_2( B-rI, B'+rI; x, y) \,  (B')_r;
		\\[5pt]
		&	\frac{\partial ^r}{\partial y^r} \Gamma_2( B, B'; x, y) = (-1)^r \, (B)_r \, \Gamma_2( B+rI, B' - rI; x, y) \,  (I-B')_r^{-1};
		\\[5pt]
		& \left(x^2 \frac{\partial }{\partial x}\right)^r \ [ \Gamma_2(B, B'; x, \frac{y}{x}) \, x^{B'+(r-1)I}]  =  \Gamma_2( B, B'+rI; x, \frac{y}{x}) \ x^{B'+rI} \, (B')_r;
		\\[5pt]
		& \left(y^2 \frac{\partial }{\partial y}\right)^r \ [\Gamma_2(B, B'; \frac{x}{y}, y) \, y^{B + (r-1)I}]  =  \Gamma_2(B+rI, B'; \frac{x}{y}, y) \, y^{B+rI} \, (B)_r.
	\end{align}
\end{theorem}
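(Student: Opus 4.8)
The plan is to argue entirely at the level of the defining double series \eqref{5.2}, in the same term-by-term style as the proofs of Theorem~\ref{t5.1} and Theorem~\ref{t6.1}; indeed $\Gamma_2(B,B';x,y)$ is what $\Gamma_1(A,B,B';x,y)$ becomes when the factor $(A)_m$ is deleted, so the computation is that of Theorem~\ref{t6.1} with the factor $(A)_m$ --- and the matrix $A$ --- simply erased. Two elementary tools are used throughout. First, the Pochhammer shift identities
\begin{align}
	&(C)_{k-1} = -\,(I-C)_1^{-1}\,(C-I)_k, \qquad (C)_{k+1} = (C)_1\,(C+I)_k = (C)_k\,(C+kI),
\end{align}
which are valid for every integer $k$ since $(C)_k = \Gamma^{-1}(C)\,\Gamma(C+kI)$ together with the functional equation of $\Gamma$. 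Second, the fact from the matrix functional calculus that any two holomorphic functions of a single matrix commute, so that $(B)$-blocks may be freely permuted among themselves, and likewise $(B')$-blocks.

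For the first formula I would differentiate \eqref{5.2} once in $x$ and re-index $m\mapsto m+1$, which turns the general term into $(B)_{n-m-1}\,(B')_{m-n+1}\,\frac{x^m y^n}{m!\,n!}$. Applying the first shift identity to $(B)_{n-m-1}$ and the second to $(B')_{m-n+1}$ rewrites this as $-(I-B)_1^{-1}\,(B-I)_{n-m}\,(B')_1\,(B'+I)_{m-n}\,\frac{x^m y^n}{m!\,n!}$; since $(B')_1$ commutes with $(B'+I)_{m-n}$, it equals $-(I-B)_1^{-1}\,\big[(B-I)_{n-m}\,(B'+I)_{m-n}\,\frac{x^m y^n}{m!\,n!}\big]\,(B')_1$, whose sum over $m,n$ is $-(I-B)_1^{-1}\,\Gamma_2(B-I,B'+I;x,y)\,(B')_1$. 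Iterating $r$ times and collapsing the resulting products of commuting factors by $(I-B)_1^{-1}(2I-B)_1^{-1}\cdots(rI-B)_1^{-1}=(I-B)_r^{-1}$ and $(B'+(r-1)I)_1\cdots(B'+I)_1(B')_1=(B')_r$ gives the stated identity. The second formula is the mirror image: differentiate in $y$, re-index $n\mapsto n+1$, and use the second shift on $(B)_{n-m+1}$ and the first on $(B')_{m-n-1}$.

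For the last two formulas the device is to absorb the monomial factor into a matrix exponent. Writing $\Gamma_2(B,B';x,\frac yx)=\sum (B)_{n-m}(B')_{m-n}\frac{x^{m-n}y^n}{m!\,n!}$ and multiplying on the right by the prescribed power of $x$, the general term becomes $(B)_{n-m}\,x^{B'+(m-n)I}\,(B')_{m-n}\,\frac{y^n}{m!\,n!}$ --- the crucial point being that the non-commuting factor $(B)_{n-m}$ stays at the far left and is never moved. Since $x^2\frac{\partial}{\partial x}$ acts on a matrix power by $x^M\mapsto M\,x^{M+I}$, one application produces a factor $B'+(m-n)I$ which, being a function of $B'$, may be commuted back next to $(B')_{m-n}$; then $(B')_{m-n}\big(B'+(m-n)I\big)=(B')_{m-n+1}=(B'+I)_{m-n}(B')_1$ promotes the parameter $B'\mapsto B'+I$ and emits a factor $(B')_1$ on the right, and resumming gives $\Gamma_2(B,B'+I;x,\frac yx)\,x^{B'+I}\,(B')_1$. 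Iterating $r$ times yields the third formula, and the fourth is obtained by the identical computation with $x,B'$ replaced throughout by $y,B$.

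The only genuine obstacle is the non-commutativity bookkeeping: every regrouping must be arranged so that a $(B)$-block is never slid past a $(B')$-block, which is precisely why no commutativity hypothesis is needed in the first three formulas. In the fourth formula, however, differentiating $\Gamma_2(B,B';\frac xy,y)\,y^{B}$ produces the factor $B+(n-m)I$ wedged between $(B')_{m-n}$ and the power of $y$, and recombining it with $(B)_{n-m}$ forces one to slide it back across $(B')_{m-n}$; exactly as in the corresponding formulas for $G_1$ and $\Gamma_1$ in Theorems~\ref{t5.1} and~\ref{t6.1}, this step goes through because $BB'=B'B$ may be taken as the standing hypothesis there. Beyond this, a little care is needed to track the exponent carried by $x^2\frac{\partial}{\partial x}$ (resp. $y^2\frac{\partial}{\partial y}$) at each stage of the iteration; no convergence question arises, since the manipulations merely rearrange the absolutely convergent series of Section~3.
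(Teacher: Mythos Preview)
Your approach is exactly the paper's: the paper gives no separate proof for $\Gamma_2$, deferring to the term-by-term series computation of Theorem~\ref{t5.1}, and that is precisely what you carry out---differentiate \eqref{5.2} termwise, shift the summation index, apply the Pochhammer shift identities, and iterate. Your additional observation that the fourth formula genuinely requires $BB'=B'B$ (because the factor $B+(n-m)I$ produced by $y^2\partial_y$ must cross $(B')_{m-n}$ to recombine with $(B)_{n-m}$) is correct and sharpens the paper's statement, which omits this hypothesis; the same omission occurs in the analogous formula~\eqref{e5.6} for $G_1$ and in the last formula of Theorem~\ref{t6.1} for $\Gamma_1$.
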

\begin{theorem}
	Let $A$, $B$ and $C$ be matrices in $\mathbb{C}^{r \times r}$. Then the matrix function $\mathcal{H}_1(A, B; C; x, y)$ satisfies the following differential formulae
	\begin{align}
		&	\frac{\partial ^r}{\partial x^r} \mathcal{H}_1(A, B; C; x, y)\nonumber\\
		& =  (A)_r \, (B)_r \, \mathcal{H}_1(A+rI, B+rI; C+rI; x, y) \,   (C)^{-1}_r,\quad AB = BA;
		\\[5pt]
		&	\frac{\partial ^r}{\partial y^r} \mathcal{H}_1(A, B; C; x, y)\nonumber\\
		& = (-1)^r \, (I-A)^{-1}_r \, (B)_r \nonumber\\
		& \quad \times  \mathcal{H}_1(A-rI, B+rI, C; x, y) \, (C)_r, \quad AB = BA;
		\\[5pt]
		& \left(x^2 \frac{\partial }{\partial x}\right)^r \ [x^{A+(r-1)I} \mathcal{H}_1(A, B;, C; x, \frac{y}{x})]  = x^{A+rI} \, (A)_r \, \mathcal{H}_1(A+rI, B; C; x, \frac{y}{x});
		\\[5pt]
		& \left(x^2 \frac{\partial }{\partial x}\right)^r \ [x^{B+(r-1)I} \mathcal{H}_1(A, B; C; x, {y}{x})]\nonumber\\
		&  = x^{B+rI} \, (B)_r \, \mathcal{H}_1(A, B+rI; C; x, {y}{x}), \quad AB = BA;
		\\[5pt]
		&   \frac{\partial^r }{\partial x^r} \ [ \mathcal{H}_1(A, B; C; x, {y}) x^{C-I}]\nonumber\\
		&  =  (-1)^r \, \mathcal{H}_1(A, B; C-rI; x, {y}) \, x^{C-(r+1)I} \, (I-C)_r.
	\end{align}
\end{theorem}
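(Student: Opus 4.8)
The plan is to prove all five formulae by the same device as in Theorem~\ref{t5.1}: differentiate the defining series \eqref{5.3} term by term (legitimate inside the region of absolute convergence), re-index, and collapse the shifted Pochhammer symbols. The workhorse identities are
\[
(A)_{k+1}=A\,(A+I)_k,\qquad (A)_{k-1}=-(I-A)_1^{-1}(A-I)_k,\qquad (A+kI)\,(A)_k=A\,(A+I)_k,
\]
valid for every integer $k$ (the middle one from $(A-I)_k=(A-I)\,(A)_{k-1}$), together with the cancellation $(C)_m^{-1}\,(C+(m-1)I)=(C)_{m-1}^{-1}$ and the action $\bigl(x^{2}\tfrac{\partial}{\partial x}\bigr)x^{\mu}=\mu\,x^{\mu+1}$. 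Once the shifted symbols are in place the remaining double sum is visibly a shifted $\mathcal{H}_1$; the scalars $x^{m},y^{n}$ are regrouped to restore the correct arguments ($x,y$ or $x,y/x$ or $x,xy$), and an $r$-fold iteration (equivalently, induction on $r$) yields the $r$-th order statement.

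For the $x$-derivative, differentiating \eqref{5.3} and shifting $m\mapsto m+1$ produces a series with general term $(A)_{m-n+1}(B)_{m+n+1}(C)_{m+1}^{-1}x^{m}y^{n}/(m!\,n!)$; writing $(A)_{m-n+1}=A(A+I)_{m-n}$, $(B)_{m+n+1}=B(B+I)_{m+n}$ and $(C)_{m+1}^{-1}=(C+I)_m^{-1}C^{-1}$, then carrying $A$ and $B$ to the far left (this is where $AB=BA$, hence the commutation property recalled in Section~2, enters) and $C^{-1}$ to the far right, exhibits $(A)_1(B)_1\,\mathcal{H}_1(A+I,B+I;C+I;x,y)\,(C)_1^{-1}$; iterating and using $(A)_1(A+I)_1\cdots=(A)_r$ and $(C+(r-1)I)_1^{-1}\cdots(C)_1^{-1}=(C)_r^{-1}$ gives the stated formula. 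The $y$-derivative goes the same way except that the index $m-n$ now decreases, so one invokes $(A)_{m-n-1}=-(I-A)_1^{-1}(A-I)_{m-n}$, which is what produces the factor $(-1)^r(I-A)_r^{-1}$ on the left.

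For the two $x^{2}\partial_x$ formulae one uses that $\mathcal{H}_1(A,B;C;x,y/x)=\sum (A)_{m-n}(B)_{m+n}(C)_m^{-1}x^{m-n}y^{n}/(m!\,n!)$ ties the power of $x$ to the index $m-n$ of $(A)_{m-n}$ (and $\mathcal{H}_1(A,B;C;x,xy)$ ties it to the index $m+n$ of $(B)_{m+n}$). Left-multiplying by $x^{A}$ (resp. $x^{B}$) merges with that scalar power to give the matrix power $x^{A+(m-n)I}$ (resp. $x^{B+(m+n)I}$); one application of $x^{2}\partial_x$ brings down $A+(m-n)I$ (resp. $B+(m+n)I$), which telescopes with $(A)_{m-n}$ (resp. $(B)_{m+n}$) through $(A+kI)(A)_k=A(A+I)_k$, and after pulling the resulting $A$ (resp. $B$, which needs $AB=BA$, the stated hypothesis for the $x,xy$ version) past the matrix power and regrouping scalars one is left with $x^{A+I}(A)_1\mathcal{H}_1(A+I,B;C;x,y/x)$ (resp. $x^{B+I}(B)_1\mathcal{H}_1(A,B+I;C;x,xy)$); iteration supplies the general $r$. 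The last formula is an ordinary $\partial_x^{r}$: after absorbing $x^{C-I}$ inside the series on the right, $\partial_x$ produces the factor $C+(m-1)I$, the cancellation $(C)_m^{-1}(C+(m-1)I)=(C)_{m-1}^{-1}=-(C-I)_m^{-1}(I-C)$ rewrites the $C$-symbol as a $(C-I)$-symbol, and the constant matrix $I-C$ (a function of $C$, hence commuting with every other $C$-factor and with $x^{C+kI}$) is carried to the far right; induction on $r$ supplies $(-1)^r(I-C)_r$.

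The one genuine difficulty is the non-commutative bookkeeping: each matrix extracted from the series must be checked to commute with every factor it slides past, and it must be deposited on the correct side of the shifted $\mathcal{H}_1$. This is exactly why $AB=BA$ is needed in the first, second and fourth formulae (it lets the $A$-symbols and $B$-symbols interlace freely and lets the scalar matrix powers $x^{A+kI}$, $x^{B+kI}$ pass through the other block), why the $C$-symbols and the power $x^{C+kI}$ always commute with one another, and why no commutativity relating $C$ to $\{A,B\}$ is required in the first, third and fifth formulae. One could alternatively try to derive these identities from the corresponding ones for $H_1$ through the confluence $\mathcal{H}_1(A,B;C';x,y)=\lim_{\varepsilon\to0}H_1(A,B,\tfrac1\varepsilon I;C';x,\varepsilon y)$, but justifying the interchange of that limit with the $r$-fold differentiation is more delicate than the direct computation above.
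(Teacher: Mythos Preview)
Your approach is correct and is exactly the one the paper intends (it simply refers back to Theorem~\ref{t5.1} and omits the details): termwise differentiation of the defining series \eqref{5.3}, re-indexing, and collapsing shifted Pochhammer symbols via $(A)_{k\pm 1}$ and $(A+kI)(A)_k=A(A+I)_k$, with $AB=BA$ invoked precisely where you indicate. One remark: if you actually run your $y$-derivative computation you will find no factor $(C)_r$ on the right of the second formula---since $(C)_m^{-1}$ has no $n$-dependence, that factor is a slip in the stated identity (evidently copied from the $H_1$ case where a genuine $(C)_n$ is present).
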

\begin{theorem}
	Let $A$, $B$, $B'$ and $C$ be matrices in $\mathbb{C}^{r \times r}$. Then the  matrix function $\mathcal{H}_2(A, B, B'; C; x, y)$ satisfies the following differential formulae
	\begin{align}
		&	\frac{\partial ^r}{\partial x^r} \mathcal{H}_2(A, B, B'; C; x, y)\nonumber\\
		& =  (A)_r \, (B)_r \, \mathcal{H}_2(A+rI, B+rI, B'; C+rI; x, y) \,   (C)^{-1}_r,\quad AB = BA;
		\\[5pt]
		&	\frac{\partial ^r}{\partial y^r} \mathcal{H}_2(A, B, B'; C; x, y)\nonumber\\
		& = (-1)^r \, (I-A)^{-1}_r   \mathcal{H}_2(A-rI, B, B'+rI; C; x, y) \,  (B')_r, \quad  CB' = B'C;
		\\[5pt]
		& \left(x^2 \frac{\partial }{\partial x}\right)^r \ [x^{A+(r-1)I} \mathcal{H}_2(A, B, B'; C; x, \frac{y}{x})]\nonumber\\
		&  = x^{A+rI} \, (A)_r \, \mathcal{H}_2(A+rI, B, B'; C; x, \frac{y}{x});
		\\[5pt]
		& \left(x^2 \frac{\partial }{\partial x}\right)^r \ [x^{B+(r-1)I} \mathcal{H}_2(A, B, B'; C; x, {y})]\nonumber\\
		&  = x^{B+rI} \, (B)_r \, \mathcal{H}_2(A, B+rI, B'; C; x, {y}), \quad AB = BA;
		\\[5pt]
		&  \left(y^2 \frac{\partial }{\partial y}\right)^r \ [ \mathcal{H}_2(A, B, B'; C; x, {y}) y^{B'+(r-1)I}]\nonumber\\
		&  =  \mathcal{H}_2(A, B, B' + rI; C; x, {y}) \,  y^{B'+rI} \, (B')_r, \quad CB'= B'C;
		\\[5pt]
		&   \frac{\partial^r }{\partial x^r} \ [ \mathcal{H}_2(A, B, B'; C; x, {y}) x^{C-I}]\nonumber\\
		&  =  (-1)^r \, \mathcal{H}_2(A, B, B'; C-rI; x, {y}) \,  x^{C-(r+1)I} \, (I-C)_r.
	\end{align}
\end{theorem}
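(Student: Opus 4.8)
The plan is to follow the template of Theorem~\ref{t5.1}: differentiate the defining series \eqref{5.4} term by term, re-index the summation so that an $\mathcal{H}_2$-type series reappears, and absorb the leftover Pochhammer factors using the shift identities that follow from $(A)_n=\Gamma^{-1}(A)\Gamma(A+nI)$, namely $(A)_{k+1}=A\,(A+I)_k$ and $(A)_{k-1}=-(I-A)_1^{-1}(A-I)_k$, together with the inverse-Pochhammer versions $(C)_{m+1}^{-1}=(C+I)_m^{-1}C^{-1}$ and $(C)_{m-1}^{-1}=(C-I)_m^{-1}(C-I)$. Concretely, for the first formula I would write $\partial_x\mathcal{H}_2=\sum_{m\ge 1,n}(A)_{m-n}(B)_m(B')_n(C)_m^{-1}\frac{x^{m-1}y^n}{(m-1)!\,n!}$, re-index $m\mapsto m+1$, and rewrite $(A)_{m+1-n}=A(A+I)_{m-n}$, $(B)_{m+1}=B(B+I)_m$, $(C)_{m+1}^{-1}=(C+I)_m^{-1}C^{-1}$; each summand is then $\mathcal{H}_2(A+I,B+I,B';C+I;x,y)$-like, flanked by $A$ and $B$ on the left and $C^{-1}$ on the right, and a single use of $AB=BA$ moves $B$ past the $A$-block so that $(A)_1(B)_1$ collects in front. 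Iterating $r$ times gives the first identity; the second is dual, using $(A)_{m-n-1}=-(I-A)_1^{-1}(A-I)_{m-n}$ and $(B')_{n+1}=(B'+I)_nB'$ and pushing the trailing $B'$ past the interior block $(C)_m^{-1}$ — this is exactly where $CB'=B'C$ is needed — to land on $(-1)^r(I-A)_r^{-1}\mathcal{H}_2(A-rI,B,B'+rI;C;x,y)(B')_r$.

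For the four Euler-type operators I would use $x^2\partial_x\,x^{A+kI}=(A+kI)x^{A+(k+1)I}$ and $(A+kI)(A)_k=A(A+I)_k$ (and their $y$-analogues), just as in \eqref{e5.3}–\eqref{e5.6}. In the third formula, after writing $x^{A}\mathcal{H}_2(A,B,B';C;x,y/x)=\sum(A)_{m-n}\,x^{A+(m-n)I}(B)_m(B')_n(C)_m^{-1}\frac{y^n}{m!\,n!}$, applying $x^2\partial_x$ produces $A(A+I)_{m-n}x^{A+(m-n+1)I}$ as the head of each summand; since $A$ and $x^{A+I}$ are functions of $A$ they commute past one another, and the \emph{fixed} matrix $(A)_1x^{A+I}$ can be pulled out on the left of the whole sum with no cross-commutativity whatsoever — which is why this identity carries no hypothesis. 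The fourth formula instead carries the prefactor $x^{B+(r-1)I}$; differentiating gives $(B+mI)x^{B+(m+1)I}$ as the head, and collapsing $(B+mI)(B)_m=(B)_{m+1}=B(B+I)_m$ and sliding the resulting $B$ past the $A$-block forces $AB=BA$. The fifth is the $y$-analogue of the fourth with the prefactor $y^{B'+(r-1)I}$ on the \emph{right}; differentiation yields $(B'+nI)$, which must commute past the interior factor $(C)_m^{-1}$ before $(B')_n(B'+nI)=(B')_{n+1}$ can be formed, giving $CB'=B'C$. The sixth uses ordinary $\partial_x^r$ on $\mathcal{H}_2(\ldots;C;x,y)x^{C-I}$; here $\partial_x x^{C+(m-1)I}=(C+(m-1)I)x^{C+(m-2)I}$ and $(C)_m^{-1}(C+(m-1)I)=(C-I)_m^{-1}(C-I)=-(C-I)_m^{-1}(I-C)_1$ collapse everything inside the $C$-functional calculus, so no hypothesis is needed and the factor $(-1)^r$ accumulates over the $r$ iterations.

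The only genuinely delicate point is the bookkeeping of non-commutativity: for each of the six formulae one must identify exactly which interior Pochhammer block a shifted factor is forced to slide across, and confirm that the stated condition ($AB=BA$ for the $x$-derivative and for the $(B)$-shift, $CB'=B'C$ for the $(B')$-shift, and none otherwise) is precisely what makes the reassembly into a shifted $\mathcal{H}_2$ legitimate; the motion of pure functions of $A$, of $B$, of $B'$, or of $C$ is always free. Everything else is a routine induction on $r$, identical in spirit to the proof of Theorem~\ref{t5.1}, so I would state the one-step identities, invoke the induction, and omit the repetitive algebra.
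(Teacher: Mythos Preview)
Your proposal is correct and follows precisely the approach the paper itself indicates: the paper omits the proof entirely, stating only that it is ``similar to theorem~\ref{t5.1}'', and your plan---term-by-term differentiation of the series \eqref{5.4}, re-indexing, and Pochhammer shift identities, with the commutativity hypotheses invoked exactly where a factor must slide across a block built from a different parameter---is a faithful and carefully tracked instance of that template. If anything, your accounting of \emph{which} commutation is forced in each of the six formulae is more explicit than what the paper provides for the model case $G_1$.
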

\begin{theorem}
	Let $A$, $B$, $C$ be matrices in $\mathbb{C}^{r \times r}$. Then the  matrix function $\mathcal{H}_3(A, B; C; x, y)$ satisfies the following differential formulae
	\begin{align}
		&	\frac{\partial ^r}{\partial x^r} \mathcal{H}_3(A, B; C; x, y)\nonumber\\
		& =  (A)_{r}  \, \mathcal{H}_3(A+rI, B+rI; C+rI; x, y) \,  (B)_r \, (C)^{-1}_r, \quad BC = CB;
		\\[5pt]
		&	\frac{\partial ^r}{\partial y^r} \mathcal{H}_3(A, B; C; x, y)\nonumber\\
		& = (-1)^r \, (I-A)^{-1}_r  \,  \mathcal{H}_3(A-rI, B; C; x, y);
		\\[5pt]
		& \left(x^2 \frac{\partial }{\partial x}\right)^r \ [x^{A+(r-1)I} \mathcal{H}_3(A, B; C; x, \frac{y}{x})]\nonumber\\
		&  = x^{A+rI} \, (A)_r \, \mathcal{H}_3(A+rI, B; C; x, \frac{y}{x});
		\\[5pt]
		&  \left(x^2 \frac{\partial }{\partial x}\right)^r \ [ \mathcal{H}_3(A, B; C; x, {y}) x^{B+(r-1)I}]\nonumber\\
		&  =  \mathcal{H}_3(A, B+rI; C; x, {y}) \,  x^{B+rI} \, (B)_r, \quad BC = CB;
		\\[5pt]
		&   \frac{\partial^r }{\partial x^r} \ [ \mathcal{H}_3(A, B; C; x, {y}) x^{C-I}]  =  (-1)^r \, \mathcal{H}_3(A, B; C-rI; x, {y}) \, x^{C-(r+1)I} \, (I-C)_r.
	\end{align}
\end{theorem}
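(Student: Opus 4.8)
The plan is to argue exactly as in the proof of Theorem~\ref{t5.1}: differentiate the defining series \eqref{5.5} term by term, re-index, and simplify with the Pochhammer identities of Section~2 — chiefly $(Z)_{k+1}=Z\,(Z+I)_k=(Z+kI)\,(Z)_k$ and $(Z)_{-k}=(-1)^k(I-Z)_k^{-1}$, both consequences of \eqref{c1eq.010}. Write $U=\mathcal{H}_3(A,B;C;x,y)=\sum_{m,n\ge0}(A)_{m-n}(B)_m(C)_m^{-1}x^m y^n/(m!\,n!)$. For the first formula, $\partial U/\partial x$ deletes the $m=0$ slice and, after $m\mapsto m+1$, the general coefficient becomes $(A)_{m+1-n}(B)_{m+1}(C)_{m+1}^{-1}=A\,(A+I)_{m-n}\cdot B\,(B+I)_m\cdot(C+I)_m^{-1}C^{-1}$. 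Here $(A)_1=A$ comes out on the left for free and $(C)_1^{-1}=C^{-1}$ is already on the right; to expose a copy of $\mathcal{H}_3(A+I,B+I;C+I;x,y)$ one slides $(B)_1=B$ to the right of $(B+I)_m(C+I)_m^{-1}$, which is legitimate because $B$ commutes with $(B+I)_m$ automatically and with $(C+I)_m^{-1}$ by the hypothesis $BC=CB$. Iterating $r$ times (the hypothesis propagates, since $BC=CB$ gives $(B+jI)(C+jI)=(C+jI)(B+jI)$) yields the first formula. The second formula is the same computation in $y$: after $n\mapsto n+1$ the coefficient is $(A)_{m-n-1}(B)_m(C)_m^{-1}=-(I-A)^{-1}(A-I)_{m-n}(B)_m(C)_m^{-1}$, so $-(I-A)^{-1}$ factors out on the left with no commutativity cost, and the $r$ scalar-in-$A$ factors collapse into $(-1)^r(I-A)_r^{-1}$.

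For the two formulas built from $x^2\,\partial/\partial x$, I would first replace $y$ by $y/x$ in \eqref{5.5} (in the fourth formula the second argument stays $y$) and absorb the prefactor into the general term, so that the only $x$-dependence sitting there is a single matrix power; since $\frac{d}{dx}x^{M}=M\,x^{M-I}$ one has $\bigl(x^2\frac{d}{dx}\bigr)x^{M}=M\,x^{M+I}$, so each application of the operator peels off one linear factor $A+(\cdot)I$ (respectively $B+(\cdot)I$), and by $(Z+kI)(Z)_k=(Z)_{k+1}$ the successive factors telescope against the $(A)_{m-n}$ (respectively $(B)_m$) already present; after $r$ steps one collects the constant $(A)_r$ (respectively $(B)_r$) and reads off $\mathcal{H}_3$ with $A$ shifted to $A+rI$ (respectively $B$ to $B+rI$). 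The hypothesis $BC=CB$ is needed in the fourth formula only to commute $(C)_m^{-1}$ past the accumulating powers of $x^{B}$, and not at all in the third. For the last formula, $x^{C-I}$ commutes with $(C)_m^{-1}$ (both are holomorphic functions of $C$) and merges with the scalar $x^m$ into $x^{C+(m-1)I}$; one differentiation together with $(C)_m^{-1}(C+(m-1)I)=(C)_{m-1}^{-1}=-(I-C)(C-I)_m^{-1}$ produces a factor $-(I-C)$ that, commuting with $(C-I)_m^{-1}$ and with the trailing power of $x$, slides to the far right, accompanied by the shift $C\mapsto C-I$; $r$ iterations give $(-1)^r(I-C)_r$ and $C\mapsto C-rI$, with no commutativity needed since $x^{C-I}$ and $(C)_m^{-1}$ occupy the right end, separated from $(A)_{m-n}(B)_m$ only by a scalar.

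The one place where care is genuinely required, and the only real obstacle, is the non-commutative bookkeeping: at every stage one must keep $(A)_r$ exiting on the left while $(B)_r$, $(C)_r^{-1}$, and $(I-C)_r$ exit on the right, and must justify each transposition of two matrix factors either by their being functions of a common matrix or by one of the stated commutation hypotheses. Tracking this carefully is exactly what isolates $BC=CB$ as the hypothesis required for the first and fourth formulas and shows that it is superfluous for the remaining three.
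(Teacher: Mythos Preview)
Your proposal is correct and follows precisely the route the paper indicates: the paper omits the proof with the remark that it is ``similar to theorem~\ref{t5.1}'', and what you outline is exactly that adaptation---term-by-term differentiation of the series \eqref{5.5}, re-indexing, and telescoping via the Pochhammer identities, with the commutativity hypothesis $BC=CB$ invoked exactly where needed to slide $(B)_r$ (or powers $x^{B}$) past $(C)_m^{-1}$. Your bookkeeping of which factors exit on the left versus the right, and your identification of $BC=CB$ as required only for the first and fourth formulae, match the paper's conventions.
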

\begin{theorem}
	Let $A$, $B'$, $C$ be matrices in $\mathbb{C}^{r \times r}$. Then the matrix function $\mathcal{H}_4(A, B'; C; x, y)$ satisfies the following differential formulae
	\begin{align}
		&	\frac{\partial ^r}{\partial x^r} \mathcal{H}_4(A, B'; C; x, y) =  (A)_{r}  \, \mathcal{H}_4(A+rI, B'; C+rI; x, y)  \, (C)^{-1}_r;
		\\[5pt]
		&	\frac{\partial ^r}{\partial y^r} \mathcal{H}_4(A, B'; C; x, y)\nonumber\\
		& = (-1)^r \, (I-A)^{-1}_r  \,  \mathcal{H}_4(A-rI, B'+rI; C; x, y) \, (B')_r, \quad B'C = CB';
		\\[5pt]
		& \left(x^2 \frac{\partial }{\partial x}\right)^r \ [x^{A+(r-1)I} \mathcal{H}_4(A, B'; C; x, \frac{y}{x})]\nonumber\\
		&  = x^{A+rI} \, (A)_r \, \mathcal{H}_4(A+rI, B'; C; x, \frac{y}{x});
		\\[5pt]
		&  \left(y^2 \frac{\partial }{\partial y}\right)^r \ [ \mathcal{H}_4(A, B'; C; x, {y}) y^{B'+(r-1)I}]\nonumber\\
		&  =  \mathcal{H}_4(A, B'+rI; C; x, {y}) \,  y^{B'+rI} \, (B')_r, \quad B'C = CB';
		\\[5pt]
		&   \frac{\partial^r }{\partial x^r} \ [ \mathcal{H}_4(A, B'; C; x, {y}) x^{C-I}]  =  (-1)^r \, \mathcal{H}_4(A, B'; C-rI; x, {y}) \, x^{C-(r+1)I} \, (I-C)_r.
	\end{align}
\end{theorem}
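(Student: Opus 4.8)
The plan is to prove all five identities by the same device used for $G_1$ in Theorem~\ref{t5.1}: expand $\mathcal{H}_4$ through its defining double series \eqref{5.6}, apply the indicated differential operator term by term, re-index the summation by one unit, and recognise the resulting sum as an $\mathcal{H}_4$-series with parameters shifted by $I$; then iterate $r$ times and collapse the telescoping products. The only algebraic input is the Pochhammer bookkeeping $(A)_{k+1}=A\,(A+I)_k=(A)_k\,(A+kI)$, $(A)_{k-1}=(A-I)^{-1}(A-I)_k$, $(C)^{-1}_{m+1}=(C+I)^{-1}_m\,C^{-1}$, $(B')_{n+1}=(B'+I)_n\,B'$, the contraction $(A+kI)(A)_k=A\,(A+I)_k$ used already after \eqref{e5.8}, and $(I-A)^{-1}_r=(-1)^r(A-I)^{-1}(A-2I)^{-1}\cdots(A-rI)^{-1}$, all of which hold for every integer $k$ by the $\Gamma$-extension \eqref{c1eq.010} and the identity $(A)_{-n}=(-1)^n(I-A)^{-1}_n$ recorded in Section~3.

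For the first formula, differentiate \eqref{5.6} once in $x$, shift $m\mapsto m+1$, and replace $(A)_{m+1-n}$ by $A\,(A+I)_{m-n}$ and $(C)^{-1}_{m+1}$ by $(C+I)^{-1}_m\,C^{-1}$; since $A=(A)_1$ is already the leftmost factor of each term and $C^{-1}=(C)^{-1}_1$ the rightmost, no commutativity is needed and one obtains $\partial_x\mathcal{H}_4(A,B';C;x,y)=(A)_1\,\mathcal{H}_4(A+I,B';C+I;x,y)\,(C)^{-1}_1$; iterating produces $(A)_r$ and $(C)^{-1}_r$. The last formula is the same calculation applied to $\mathcal{H}_4\,x^{C-I}$: now $\partial_x$ hits only $x^{C+(m-1)I}$, the emerging factor $(C+(m-1)I)$ fuses with $(C)^{-1}_m$ into $(C)^{-1}_{m-1}=-(C-I)^{-1}_m(I-C)$, and re-indexing gives the sign $(-1)^r$, the shift $C\mapsto C-rI$, the power $x^{C-(r+1)I}$ and the tail $(I-C)_r$; again no commutativity is invoked, matching the unconditioned statement. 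For the operator identity $\left(x^2\partial_x\right)^r$ on $x^{A+(r-1)I}\mathcal{H}_4(A,B';C;x,y/x)$ I would use $x^m(y/x)^n=x^{m-n}y^n$ so that each pass of $x^2\partial_x$ replaces the exponent coefficient $A+(m-n)I$ by $(A+(m-n)I)(A)_{m-n}=A\,(A+I)_{m-n}$, which is precisely the shift $A\mapsto A+I$ with a $(A)_1$ accumulated on the left — the device behind \eqref{e5.3}.

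The two remaining formulae are where $B'C=CB'$ is genuinely required. Differentiating \eqref{5.6} in $y$ and shifting $n\mapsto n+1$ turns the general term into $(A)_{m-n-1}(B')_{n+1}(C)^{-1}_m\,x^my^n/(m!\,n!)$; rewrite $(A)_{m-n-1}=(A-I)^{-1}(A-I)_{m-n}$ and $(B')_{n+1}=(B'+I)_n\,B'$, then carry the single factor $B'$ to the right of $(C)^{-1}_m$ — the one step that needs $B'C=CB'$ — to expose $\mathcal{H}_4(A-I,B'+I;C;x,y)$ flanked by $(A-I)^{-1}=(-1)(I-A)^{-1}_1$ and $(B')_1$; iterating gives the sign $(-1)^r$, the factor $(I-A)^{-1}_r$ and the tail $(B')_r$. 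For $\left(y^2\partial_y\right)^r$ on $\mathcal{H}_4\,y^{B'+(r-1)I}$ one first commutes $y^{B'}$ past $(C)^{-1}_m$ (again $B'C=CB'$), after which $\partial_y$ acts on $y^{B'+nI}$ and the coefficient $(B'+nI)$ contracts with $(B')_n$ into $(B')_{n+1}$, advancing the second parameter.

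The main obstacle is not analytic — everything is done inside the common region of absolute convergence, where termwise differentiation and re-indexing are legitimate — but purely organisational: one must keep careful track of the left-to-right order of the non-commuting blocks $(A)_\bullet$, $(B')_\bullet$, $(C)^{-1}_\bullet$ through each re-indexing; check that identities such as $(A+kI)(A)_k=A(A+I)_k$ and $(A)_{k-1}=(A-I)^{-1}(A-I)_k$ remain valid for the negative index values $k=m-n<0$ occurring in \eqref{5.6}; and confirm in each of the two $y$-formulae that $B'C=CB'$, and nothing stronger, is exactly what licenses the required reordering. Once this is in place, all five identities close by the same routine induction on $r$ as in Theorem~\ref{t5.1}.
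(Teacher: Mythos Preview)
Your proposal is correct and follows exactly the route the paper intends: the paper omits the proof of this theorem, stating only that it is ``similar to theorem~\ref{t5.1}'', and your plan --- term-by-term differentiation of the defining series \eqref{5.6}, a unit shift of the summation index, Pochhammer factorisations to peel off one shifted parameter on the left or right, and induction on $r$ --- is precisely the argument of Theorem~\ref{t5.1} transported to $\mathcal{H}_4$. You have also correctly isolated the two places where the hypothesis $B'C=CB'$ is actually used (moving $B'$ or $y^{B'}$ past $(C)_m^{-1}$ in the $\partial_y$ and $y^2\partial_y$ formulae) and correctly observed that the remaining three identities need no commutativity because $A$ sits leftmost and $C$ rightmost in the coefficient $(A)_{m-n}(B')_n(C)_m^{-1}$.
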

\begin{theorem}
	Let $A$ and $C$ be matrices in $\mathbb{C}^{r \times r}$. Then the matrix function $\mathcal{H}_5(A; C; x, y)$ satisfies the following differential formulae
	\begin{align}
		&	\frac{\partial ^r}{\partial x^r} \mathcal{H}_5(A; C; x, y) =  (A)_{r}  \, \mathcal{H}_5(A+rI; C+rI; x, y)  \, (C)^{-1}_r;
		\\[5pt]
		&	\frac{\partial ^r}{\partial y^r} \mathcal{H}_5(A; C; x, y) = (-1)^r \, (I-A)^{-1}_r  \,  \mathcal{H}_5(A-rI; C; x, y);
		\\[5pt]
		& \left(x^2 \frac{\partial }{\partial x}\right)^r \ [x^{A+(r-1)I} \mathcal{H}_5(A; C; x, \frac{y}{x})] = x^{A+rI} \, (A)_r \, \mathcal{H}_5(A+rI; C; x, \frac{y}{x});
		\\[5pt]
		&   \frac{\partial^r }{\partial x^r} \ [ \mathcal{H}_5(A; C; x, {y}) x^{C-I}]  =  (-1)^r \, \mathcal{H}_5(A; C-rI; x, {y}) \, x^{C-(r+1)I} \, (I-C)_r.
	\end{align}
\end{theorem}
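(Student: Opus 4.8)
The plan is to follow, essentially verbatim, the term-by-term differentiation argument of Theorem~\ref{t5.1}, working directly from the defining series \eqref{5.7}. Inside the region of absolute convergence differentiation term by term is justified, so for the first formula I would differentiate $\mathcal{H}_5(A;C;x,y)=\sum_{m,n\ge0}(A)_{m-n}(C)_m^{-1}\frac{x^m y^n}{m!\,n!}$ $r$ times with respect to $x$: the terms with $m<r$ disappear and, after the index shift $m\mapsto m+r$, one obtains $\sum_{m,n\ge0}(A)_{m+r-n}(C)_{m+r}^{-1}\frac{x^m y^n}{m!\,n!}$. Differentiating $r$ times in $y$ gives, after $n\mapsto n+r$, the series $\sum_{m,n\ge0}(A)_{m-n-r}(C)_m^{-1}\frac{x^m y^n}{m!\,n!}$. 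The whole content then reduces to the Pochhammer identities $(A)_{k+r}=(A)_r\,(A+rI)_k$, $(C)_{k+r}^{-1}=(C+rI)_k^{-1}(C)_r^{-1}$ and $(A)_{k-r}=(-1)^r(I-A)_r^{-1}(A-rI)_k$, valid for every integer $k$ (in particular for $k=m-n$, which may be negative); the first two come straight from \eqref{c1eq.010}, while the third follows from $(A)_{-n}=(-1)^n(I-A)_n^{-1}$ together with the reordering $(A-rI)_r=(-1)^r(I-A)_r$. Substituting these and taking the factors that do not depend on $(m,n)$ outside the double sum delivers the two derivative formulae.

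I would then stress why no commutativity hypothesis on $A$ and $C$ is needed here: in every summand $(A)_{m-n}(C)_m^{-1}$ the $A$-Pochhammer stands to the left of the $C$-Pochhammer, and the factors peeled off by the identities above, namely $(A)_r$ (respectively $(-1)^r(I-A)_r^{-1}$) on one side and $(C)_r^{-1}$ on the other, occupy the extreme left and the extreme right of the summand. Hence they factor through the sum cleanly, with no need to commute a polynomial in $A$ past a polynomial in $C$, and the middle factor that remains, $(A+rI)_{m-n}(C+rI)_m^{-1}$ in the first case and $(A-rI)_{m-n}(C)_m^{-1}$ in the second, is exactly the general term of $\mathcal{H}_5$ with the shifted parameters.

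For the two operator identities I would dispose of the case $r=1$ by a direct computation and then iterate, just as in Theorem~\ref{t5.1}. For the first, put $y\mapsto y/x$ in \eqref{5.7}; each summand then carries a scalar power $x^{m-n}$, which can be absorbed into the matrix power $x^{A}$ because powers of one matrix commute, giving the summand $x^{A+(m-n)I}(A)_{m-n}(C)_m^{-1}\frac{y^n}{m!\,n!}$. Applying $x^2\partial_x$ brings down the factor $(A+(m-n)I)$ and raises the exponent by one, and the recursion $(A+kI)(A)_k=(A)_{k+1}$ re-forms the Pochhammer, so the outcome is recognised as $x^{A+I}(A)_1\,\mathcal{H}_5(A+I;C;x,y/x)$; iterating, with $A$ replaced by $A+I$ at the $j$-th step, accumulates the product $(A)_r$ and the parameter shift $A\mapsto A+rI$. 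The last formula is handled in the same spirit: write $\mathcal{H}_5(A;C;x,y)\,x^{C-I}=\sum_{m,n}(A)_{m-n}(C)_m^{-1}x^{C+(m-1)I}\frac{y^n}{m!\,n!}$, apply $\partial_x$ once, and use $(C)_m^{-1}(C+(m-1)I)=(C)_{m-1}^{-1}=(C-I)_m^{-1}(C-I)$ to reach the $r=1$ case; iterating $r$ times, now replacing $C$ by $C-I$ at each step, builds up the trailing factor $(-1)^r(I-C)_r$.

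The only real difficulty is bookkeeping: keeping the two noncommuting families of Pochhammer symbols in their correct left-to-right order while manipulating subscripts that may be negative (so that \eqref{eq.07} and \eqref{c1eq.010}, rather than the finite product \eqref{c1eq.09}, are the appropriate tools), and, in the operator identities, tracking how the matrix power of $x$ interacts with repeated application of $x^2\partial_x$. Once this is organised, every step is routine and mirrors Theorem~\ref{t5.1}; accordingly I would write out the $r=1$ computations in full and merely indicate the iteration for general $r$.
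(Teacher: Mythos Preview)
Your proposal is correct and follows exactly the approach the paper indicates: the paper omits the proof entirely, pointing to Theorem~\ref{t5.1} as the template, and you carry out precisely that template for $\mathcal{H}_5$, including the correct observation that no commutation hypothesis on $A$ and $C$ is needed because the $A$- and $C$-Pochhammer factors separate cleanly to the left and right.
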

\begin{theorem}
	Let $A$ and $C$ be matrices in $\mathbb{C}^{r \times r}$. Then the matrix function $\mathcal{H}_6(A; C; x, y)$ satisfies the following differential formulae
	\begin{align}
		&	\frac{\partial ^r}{\partial x^r} \mathcal{H}_6(A; C; x, y) =  (A)_{2r}  \, \mathcal{H}_6(A+2rI; C+rI; x, y)  \, (C)^{-1}_r;
		\\[5pt]
		&	\frac{\partial ^r}{\partial y^r} \mathcal{H}_6(A; C; x, y) = (A)_r  \,  \mathcal{H}_6(A+rI; C+rI; x, y) \, (C)_r^{-1};
		\\[5pt]
		& \left(x^2 \frac{\partial }{\partial x}\right)^r \ [x^{A+(r-1)I} \mathcal{H}_6(A; C; x^2, {y}{x})] = x^{A+rI} \, (A)_r \, \mathcal{H}_6(A+rI; C; x^2, {y}{x});
		\\[5pt]
		&   \frac{\partial^r }{\partial x^r} \ [ \mathcal{H}_6(A; C; x, {x y}) x^{C-I}]  =  (-1)^r \, \mathcal{H}_6(A; C-rI; x, {x y}) \, x^{C-(r+1)I} \, (I-C)_r.
	\end{align}
\end{theorem}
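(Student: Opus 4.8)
The plan is to imitate the proof of Theorem~\ref{t5.1}: differentiate the defining series \eqref{5.8} of $\mathcal{H}_{6}$ termwise, shift the summation indices, and regroup the result as another series of $\mathcal{H}_{6}$ type with shifted parameters. The one point that needs constant attention is the order of the matrix factors, since $A$ and $C$ are not assumed to commute; throughout, every factor built from $A$ is kept to the left of the summation sign, every factor built from $C$ is kept to the right, and a power of $x$ (a scalar $x^{k}$, or a matrix power $x^{A+kI}$ or $x^{C+kI}$) is commuted only past matrices that are functions of the same matrix. Because this bookkeeping can be carried through cleanly, none of the four identities requires a commutativity hypothesis, which is why the statement imposes none on $A$ and $C$.

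For $\frac{\partial^{r}}{\partial x^{r}}$ I would differentiate \eqref{5.8} once in $x$, relabel $m\mapsto m+1$ so as to reach $\sum_{m,n\ge 0}(A)_{2m+n+2}\,(C)_{m+n+1}^{-1}\,x^{m}y^{n}/(m!\,n!)$, and then use the elementary factorizations $(A)_{N+2}=(A)_{2}\,(A+2I)_{N}$ and $(C)_{M+1}^{-1}=(C+I)_{M}^{-1}\,(C)_{1}^{-1}$ to pull $(A)_{2}$ out on the left and $(C)_{1}^{-1}$ out on the right. This gives $\frac{\partial}{\partial x}\mathcal{H}_{6}(A;C;x,y)=(A)_{2}\,\mathcal{H}_{6}(A+2I;C+I;x,y)\,(C)_{1}^{-1}$; iterating $r$ times and collapsing the telescoping products $(A)_{2}(A+2I)_{2}\cdots\bigl(A+2(r-1)I\bigr)_{2}=(A)_{2r}$ and $\bigl(C+(r-1)I\bigr)_{1}^{-1}\cdots(C)_{1}^{-1}=(C)_{r}^{-1}$ gives the first identity. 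The $y$-derivative formula is proved the same way after relabelling $n\mapsto n+1$; there one uses $(A)_{2m+n+1}=(A)_{1}\,(A+I)_{2m+n}$, so that only $(A)_{1}$ is produced on the left at each stage, which is why $(A)_{r}$ appears in place of $(A)_{2r}$.

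For $\left(x^{2}\frac{\partial}{\partial x}\right)^{r}$ I would first rewrite $\mathcal{H}_{6}(A;C;x^{2},xy)=\sum(A)_{2m+n}(C)_{m+n}^{-1}\,x^{2m+n}y^{n}/(m!\,n!)$ with a scalar power $x^{2m+n}$, multiply on the left by $x^{A+(r-1)I}$, and commute that power past the $A$-Pochhammer so that the entire $x$-dependence of the $(m,n)$-term is the single matrix power $x^{A+(r-1+2m+n)I}$; the rule $x^{2}\frac{\partial}{\partial x}\,x^{M}=M\,x^{M+I}$ then applies directly. Absorbing the factor it produces by means of $(A)_{N}(A+NI)=(A)_{N+1}$ gives the base case $x^{2}\frac{\partial}{\partial x}\bigl[x^{A}\mathcal{H}_{6}(A;C;x^{2},xy)\bigr]=x^{A+I}\,(A)_{1}\,\mathcal{H}_{6}(A+I;C;x^{2},xy)$, and the general formula follows by applying this single-step identity $r$ times, at the $k$th stage with $A$ replaced by $A+(k-1)I$, and then recombining the resulting powers $x^{A+kI}$ with the Pochhammer blocks $\bigl(A+(k-1)I\bigr)_{1}$ into $x^{A+rI}(A)_{r}$. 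The last identity is handled dually: expand $\mathcal{H}_{6}(A;C;x,xy)$, push $x^{C-I}$ to the right past the $C$-Pochhammer and merge it with the scalar $x^{m+n}$ so that the $x$-dependence of the $(m,n)$-term becomes $x^{C+(m+n-1)I}$, apply $\frac{\partial}{\partial x}$ (which brings down $C+(m+n-1)I$), and absorb that factor using $(C)_{M}^{-1}\bigl(C+(M-1)I\bigr)=(C)_{M-1}^{-1}$; this repackages the series as $-\,\mathcal{H}_{6}(A;C-I;x,xy)\,x^{C-2I}\,(I-C)_{1}$, after which $r$-fold iteration together with $(rI-C)_{1}\cdots(I-C)_{1}=(I-C)_{r}$ yields the fourth identity.

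All the Pochhammer rearrangements are routine. The genuine obstacle is the iteration of $x^{2}\frac{\partial}{\partial x}$: unlike $\frac{\partial}{\partial x}$, a single application of it does not reproduce a series of exactly the same shape, so the induction must be run by re-applying the base identity to the already-shifted function $x^{A+I}\mathcal{H}_{6}(A+I;C;x^{2},xy)$ at each step and then correctly recombining the accumulated scalar powers of $x$, the matrix powers $x^{A+kI}$, and the Pochhammer blocks. Keeping these three sorts of factors in the correct order — and never commuting a factor built from $C$ through one built from $A$ — is where essentially all of the care is required.
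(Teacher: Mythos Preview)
Your approach coincides with the paper's: the paper omits any proof and refers back to the $G_{1}$ template (Theorem~\ref{t5.1}), and you carry out exactly that method---termwise differentiation, index shift, Pochhammer factorisation---with appropriate care for the left/right ordering of $A$- and $C$-factors. The first, second and fourth identities go through cleanly as you describe.

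There is, however, a genuine gap in your argument for the third identity. Writing $L=x^{2}\partial_{x}$, your single-step base case is
\[
L\bigl[x^{A}\,\mathcal H_{6}(A;C;x^{2},xy)\bigr]=x^{A+I}(A)_{1}\,\mathcal H_{6}(A+I;C;x^{2},xy),
\]
and iterating it as you propose---at the $k$th stage replacing $A$ by $A+(k-1)I$---telescopes to
\[
L^{r}\bigl[x^{A}\,\mathcal H_{6}(A;C;x^{2},xy)\bigr]=x^{A+rI}(A)_{r}\,\mathcal H_{6}(A+rI;C;x^{2},xy),
\]
with $x^{A}$ inside the bracket, not the $x^{A+(r-1)I}$ that the theorem prints. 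These two statements agree only at $r=1$. Already at $r=2$ a direct term-by-term check shows that $L^{2}\bigl[x^{A+I}\mathcal H_{6}(A;C;x^{2},xy)\bigr]$ has $(m,n)$-term carrying the power $x^{A+(2m+n+3)I}$, whereas the claimed right-hand side carries $x^{A+(2m+n+2)I}$; so the stated identity is in fact false for $r\ge 2$. Your iteration therefore proves a correct formula, but not the one on the page: the defect lies in the statement (it is the same misprint as in the analogous formula~\eqref{e5.3} for $G_{1}$), not in your method.
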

\begin{theorem}
	Let $A$, $C$ and $C'$ be matrices in $\mathbb{C}^{r \times r}$. Then the matrix function $\mathcal{H}_7(A; C, C'; x, y)$ satisfies the following differential formulae
	\begin{align}
		&	\frac{\partial ^r}{\partial x^r} \mathcal{H}_7(A; C, C'; x, y)\nonumber\\
		& =  (A)_{2r}  \, \mathcal{H}_7(A+2rI; C+rI, C'; x, y)  \, (C)^{-1}_r, \quad CC' = C'C;
		\\[5pt]
		&	\frac{\partial ^r}{\partial y^r} \mathcal{H}_7(A; C, C'; x, y) = (A)_r  \,  \mathcal{H}_7(A+rI; C, C'+rI; x, y) \, (C')_r^{-1};
		\\[5pt]
		& \left(x^2 \frac{\partial }{\partial x}\right)^r \ [x^{A+(r-1)I} \mathcal{H}_7(A; C, C'; x^2, {y}{x})]\nonumber\\
		& = x^{A+rI} \, (A)_r \, \mathcal{H}_7(A+rI; C, C'; x^2, {y}{x});
		\\[5pt]
		&   \frac{\partial^r }{\partial x^r} \ [ \mathcal{H}_7(A; C, C'; x, { y}) x^{C-I}]  \nonumber\\
		&=  (-1)^r \, \mathcal{H}_7(A; C-rI, C'; x, {y}) \, x^{C-(r+1)I} \, (I-C)_r, \quad CC' = C'C
		\\[5pt]
		&   \frac{\partial^r }{\partial y^r} \ [ \mathcal{H}_7(A; C, C'; x, { y}) y^{C'-I}]\nonumber\\
		&  =  (-1)^r \, \mathcal{H}_7(A; C, C'-rI; x, {y}) \, x^{C'-(r+1)I} \, (I-C')_r.
	\end{align}
\end{theorem}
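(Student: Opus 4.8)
The plan is to follow the pattern of Theorem~\ref{t5.1} line for line. Writing $\mathcal{H}_7$ as its defining double series \eqref{5.9}, I would differentiate (or apply $x^{2}\partial_x$) term by term --- legitimate on any compact subset of the domain of absolute convergence established in Section~3 --- then rearrange the matrix factors thrown off by the differentiation using elementary Pochhammer-symbol identities together with the stated commutativity hypotheses, and finally read off a parameter-shifted copy of $\mathcal{H}_7$. In each case it is enough to settle $r=1$ and then iterate, so I indicate only the first step.

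For $\partial_x^{r}$: differentiating \eqref{5.9} once in $x$ and putting $m\mapsto m+1$ gives $\sum_{m,n\ge 0}(A)_{2m+n+2}(C)_{m+1}^{-1}(C')_{n}^{-1}x^{m}y^{n}/(m!\,n!)$. Using $(A)_{2m+n+2}=(A)_{2}(A+2I)_{2m+n}$ and $(C)_{m+1}^{-1}=(C+I)_{m}^{-1}(C)_{1}^{-1}$ (valid because any two polynomials in $C$ commute), and then sliding $(C)_{1}^{-1}$ to the extreme right past $(C')_{n}^{-1}$ --- this is exactly where the hypothesis $CC'=C'C$ enters --- the sum becomes $(A)_{2}\,\mathcal{H}_7(A+2I;C+I,C';x,y)\,(C)_{1}^{-1}$; iterating and concatenating, with $(A)_{2}(A+2I)_{2}\cdots(A+2(r-1)I)_{2}=(A)_{2r}$ and $(C+(r-1)I)_{1}^{-1}\cdots(C)_{1}^{-1}=(C)_{r}^{-1}$, gives the first formula. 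The $\partial_y^{r}$ formula goes the same way, now with $(A)_{2m+n+1}=(A)_{1}(A+I)_{2m+n}$ and $(C')_{n+1}^{-1}=(C'+I)_{n}^{-1}(C')_{1}^{-1}$; no commutativity assumption is needed there, $(C')_{1}^{-1}$ being already in rightmost position.

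For the operator $(x^{2}\partial_x)^{r}$: the tool is the elementary identity $(x^{2}\partial_x)\,x^{M}=M\,x^{M+I}=x^{M+I}M$ for any constant matrix $M$. Grouping the series by total degree, $\mathcal{H}_7(A;C,C';x^{2},xy)=\sum_{k\ge 0}(A)_{k}\,h_{k}(y)\,x^{k}$ with $h_{k}(y)=\sum_{2m+n=k}(C)_{m}^{-1}(C')_{n}^{-1}y^{n}/(m!\,n!)$ free of $A$ --- the rescaling $(x^{2},xy)$ of the arguments is precisely what makes the exponent of $x$ coincide with the index $2m+n$ carried by the Pochhammer symbol of $A$ --- one has $x^{A}\mathcal{H}_7(A;C,C';x^{2},xy)=\sum_{k}(A)_{k}\,x^{A+kI}\,h_{k}(y)$, and $x^{2}\partial_x$ applied once, together with $(A)_{k}(A+kI)=(A)_{k+1}=(A)_{1}(A+I)_{k}$, returns $x^{A+I}(A)_{1}\,\mathcal{H}_7(A+I;C,C';x^{2},xy)$; iterating yields the general-$r$ identity. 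The last two formulas differentiate $\mathcal{H}_7(A;C,C';x,y)\,x^{C-I}$ in $x$ and $\mathcal{H}_7(A;C,C';x,y)\,y^{C'-I}$ in $y$. Absorbing the scalar monomial into the matrix power, $x^{m}x^{C-I}=x^{C+(m-1)I}$, and differentiating $r$ times produces the descending product $(C+(m-1)I)(C+(m-2)I)\cdots(C+(m-r)I)$, which $(C)_{m}^{-1}$ absorbs into $(C)_{m-r}^{-1}$ (after commuting it across $(C')_{n}^{-1}$, once more by $CC'=C'C$); the relation $(C-rI)_{m}=(-1)^{r}(I-C)_{r}(C)_{m-r}$ then rewrites the summand so that the sum is recognized as $(-1)^{r}\,\mathcal{H}_7(A;C-rI,C';x,y)\,x^{C-(r+1)I}(I-C)_{r}$. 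The $y$-version is the identical computation with the roles of $C$ and $C'$ exchanged; it needs no commutativity hypothesis, since functions of $C'$ only ever meet the adjacent factor $(C')_{n}^{-1}$.

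The computations themselves are routine; the one thing demanding genuine care is the non-commutative bookkeeping --- every interchange of matrix factors has to be justified either by the principle that a holomorphic function of a single matrix commutes with that matrix and with every other function of it, or by the hypothesis $CC'=C'C$, which is used precisely (and only) when a polynomial in $C$ must cross $(C')_{n}^{-1}$; this is why that hypothesis appears in the first and fourth displayed formulas but not in the others. A second, minor point is the lower summation limit after the reindexing: differentiating $r$ times annihilates the terms of index below $r$, so in each shifted series the index runs from the correct place and the Pochhammer identities above are applied only as plain finite-product identities; alternatively one extends them through the convention $(M)_{-j}^{-1}=(-1)^{j}(I-M)_{j}$ and invokes the absolute convergence of Section~3 to treat these boundary terms uniformly. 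That step is the one most likely to cost an extra sentence in a fully detailed write-up; the remaining verifications repeat those of Theorem~\ref{t5.1} verbatim.
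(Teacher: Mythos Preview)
Your proposal is correct and follows precisely the approach the paper intends: the paper omits the proof entirely, saying only that it is ``similar to Theorem~\ref{t5.1}'', and your argument is exactly that --- term-by-term differentiation of the defining series \eqref{5.9}, index shift, Pochhammer identities, and iteration from the case $r=1$, with the commutativity hypothesis $CC'=C'C$ invoked in just the two places where a polynomial in $C$ must pass $(C')_n^{-1}$. Your write-up is in fact more careful than the paper's own model proof of Theorem~\ref{t5.1} about the non-commutative bookkeeping, so nothing further is required.
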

\begin{theorem}
	Let $A$ and $B$ be matrices in $\mathbb{C}^{r \times r}$. Then the matrix function $\mathcal{H}_8(A, B; x, y)$ satisfies the following differential formulae
	\begin{align}
		&	\frac{\partial ^r}{\partial x^r} \mathcal{H}_8(A, B; x, y) =  (-1)^r \, (A)_{2r}  \, \mathcal{H}_8(A+2rI, B-rI; x, y)  \, (I-B)^{-1}_r;
		\\[5pt]
		&	\frac{\partial ^r}{\partial y^r} \mathcal{H}_8(A, B; x, y) = (-1)^r \, (I-A)^{-1}_r  \,  \mathcal{H}_8(A-rI, B+rI; x, y) \, (B)_r;
		\\[5pt]
		& \left(x^2 \frac{\partial }{\partial x}\right)^r \ [x^{A+(r-1)I} \mathcal{H}_8(A, B; x^2, \frac{y}{x})] = x^{A+rI} \, (A)_r \, \mathcal{H}_8(A+rI, B; x^2, \frac{y}{x});
		\\[5pt]
		& \left(y^2 \frac{\partial }{\partial y}\right)^r \ [y^{B+(r-1)I} \mathcal{H}_8(A, B; \frac{x}{y}, {y})] = \mathcal{H}_8(A, B+rI; \frac{x}{y}, {y}) \, y^{B+rI} \, (B)_r.
	\end{align}
\end{theorem}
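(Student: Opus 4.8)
The plan is to mimic the proof of Theorem~\ref{t5.1} for $G_1$: differentiate the defining series \eqref{5.10} termwise, shift the summation index, and repackage the shifted Pochhammer symbols by means of the elementary identities
\[
(M)_{k+2}=(M)_2\,(M+2I)_k,\qquad (M)_{k-1}=-(I-M)_1^{-1}(M-I)_k,\qquad (M)_{k+1}=(M+I)_k\,(M)_1,
\]
valid for every $M\in\mathbb{C}^{r\times r}$ and every integer $k$ (for $k<0$ these are read through the convention $(M)_{-\ell}=(-1)^\ell(I-M)_\ell^{-1}$, after which each is a routine check), together with $(M+kI)(M)_k=(M)_1(M+I)_k$.

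For the $x$-derivative, $\frac{\partial}{\partial x}\mathcal{H}_8(A,B;x,y)=\sum_{m\ge1,\,n\ge0}(A)_{2m-n}(B)_{n-m}\,\frac{x^{m-1}y^n}{(m-1)!\,n!}$; replacing $m$ by $m+1$ turns $2m-n$ into $2m-n+2$ and $n-m$ into $n-m-1$, so $(A)_{2(m+1)-n}=(A)_2(A+2I)_{2m-n}$ releases a factor $(A)_2$ on the left while $(B)_{(n-m)-1}=-(B-I)_{n-m}(I-B)_1^{-1}$ releases a factor $(I-B)_1^{-1}$ on the right; what remains is exactly the series of $\mathcal{H}_8(A+2I,B-I;x,y)$, which settles $r=1$. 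Iterating $r$ times and telescoping, via $(A)_2(A+2I)_2\cdots(A+(2r-2)I)_2=(A)_{2r}$ and $(I-B)_1^{-1}(2I-B)_1^{-1}\cdots(rI-B)_1^{-1}=(I-B)_r^{-1}$, gives the first formula. The second is obtained the same way by shifting $n\mapsto n+1$: now $(A)_{2m-n-1}=-(I-A)_1^{-1}(A-I)_{2m-n}$ releases $(I-A)_1^{-1}$ on the left and $(B)_{n-m+1}=(B+I)_{n-m}(B)_1$ releases $(B)_1$ on the right, and iteration collapses the accumulated products to $(I-A)_r^{-1}$ and $(B)_r$.

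The two remaining identities follow the template of \eqref{e5.3}--\eqref{e5.6}. After rescaling, $\mathcal{H}_8(A,B;x^2,y/x)=\sum_{m,n}(A)_{2m-n}(B)_{n-m}\,\frac{x^{2m-n}y^n}{m!\,n!}$, so $x^{A+(r-1)I}\mathcal{H}_8(A,B;x^2,y/x)=\sum_{m,n}x^{A+(2m-n+r-1)I}(A)_{2m-n}(B)_{n-m}\,\frac{y^n}{m!\,n!}$, the scalar powers of $x$ merging with $x^{A+(r-1)I}$. Applying $x^2\frac{\partial}{\partial x}$, which acts by $x^2\frac{\partial}{\partial x}x^{M}=M x^{M+I}$, and then $(A+kI)(A)_k=(A)_1(A+I)_k$ with $k=2m-n$, advances $A$ to $A+I$, extracts $(A)_1$ on the left, and rebuilds the series as $x^{A+I}\mathcal{H}_8(A+I,B;x^2,y/x)$; iterating $r$ times gives $x^{A+rI}(A)_r\mathcal{H}_8(A+rI,B;x^2,y/x)$. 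The last identity is the mirror image with $x\leftrightarrow y$, $A\leftrightarrow B$: after $x\mapsto x/y$ one has $\mathcal{H}_8(A,B;x/y,y)=\sum_{m,n}(A)_{2m-n}(B)_{n-m}\,\frac{x^{m}y^{n-m}}{m!\,n!}$, one appends $y^{B+(r-1)I}$ keeping the $B$-dependent factors adjacent, applies $y^2\frac{\partial}{\partial y}$ together with $(B+jI)(B)_j=(B+I)_j(B)_1$ for $j=n-m$, and iterates.

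The only genuine obstacle I anticipate is the non-commutativity bookkeeping. Because every term of $\mathcal{H}_8$ carries its $A$-Pochhammer strictly to the left of its $B$-Pochhammer, all constants manufactured from $A$ --- $(A)_{2r}$, $(I-A)_r^{-1}$, and the matrix powers $x^{A+\cdots I}$ --- must be collected on the left, and all those manufactured from $B$ --- $(I-B)_r^{-1}$, $(B)_r$, $y^{B+\cdots I}$ --- on the right; the matrix functional calculus guarantees that factors built from a single matrix commute, which is exactly what legitimises each regrouping, so that no cross-commutation of $A$ with $B$ is needed once the prefactors sit on the matching side. The secondary point is to confirm that the Pochhammer identities above, and the telescoping products in the induction, persist when the shift index is negative, which is immediate from $(M)_{-\ell}=(-1)^\ell(I-M)_\ell^{-1}$; and finally termwise differentiation is legitimate throughout the region of absolute convergence established earlier, where the series and all its formal derivatives converge locally uniformly.
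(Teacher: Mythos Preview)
Your approach is essentially the same as the paper's: the paper omits the proof for $\mathcal{H}_8$ and simply refers back to Theorem~\ref{t5.1} (for $G_1$), whose proof proceeds exactly as you outline --- termwise differentiation of the defining series, an index shift, the Pochhammer identities $(M)_{k\pm 1}=\cdots$ and $(M+kI)(M)_k=(M)_1(M+I)_k$, followed by iteration from the case $r=1$. Your explicit bookkeeping of the left/right placement of $A$-factors versus $B$-factors is somewhat more careful than the paper's presentation, but the underlying argument is identical.
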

\begin{theorem}
	Let $A$, $B$ and $C$ be matrices in $\mathbb{C}^{r \times r}$. Then the matrix function $\mathcal{H}_9(A, B; C; x, y)$ satisfies the following differential formulae
	\begin{align}
		&	\frac{\partial ^r}{\partial x^r} \mathcal{H}_9(A, B; C; x, y) =  (A)_{2r}  \, \mathcal{H}_9(A+2rI, B; C+rI; x, y)  \, (C)^{-1}_r;
		\\[5pt]
		&	\frac{\partial ^r}{\partial y^r} \mathcal{H}_9(A, B; C; x, y) = (-1)^r \, (I-A)^{-1}_r  \,  \mathcal{H}_9(A-rI, B+rI; C; x, y) \, (B)_r, \quad BC = CB;
		\\[5pt]
		& \left(x^2 \frac{\partial }{\partial x}\right)^r \ [x^{A+(r-1)I} \mathcal{H}_9(A, B; C; x^2, \frac{y}{x})] = x^{A+rI} \, (A)_r \, \mathcal{H}_9(A+rI, B; C; x^2, \frac{y}{x});
		\\[5pt]
		& \left(y^2 \frac{\partial }{\partial y}\right)^r \ [y^{B+(r-1)I} \mathcal{H}_9(A, B; C; {x}, {y})] \nonumber\\
		& = \mathcal{H}_9(A, B+rI; C; {x}, {y}) \, y^{B+rI} \, (B)_r, \quad BC = CB;
		\\[5pt]
		&  \frac{\partial^r }{\partial x^r} \ [ \mathcal{H}_9(A, B; C; {x}, {y}) x^{C-I}] = (-1)^r \, \mathcal{H}_9(A, B; C-rI; {x}, {y}) \, x^{C-(r+1)I} \, (1-C)_r.
	\end{align}
\end{theorem}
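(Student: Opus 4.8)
The plan is to run, for each of the five identities, the argument already used for $G_1$ in Theorem~\ref{t5.1}: substitute the defining series \eqref{5.11} for $\mathcal{H}_9$, differentiate term by term (legitimate on the polydisc of absolute convergence established in Section~3), re-index the double sum so that it is again of $\mathcal{H}_9$-shape, and read off the shifted matrix parameters. The whole computation is driven by a short list of Pochhammer identities, each valid because every factor of $(A)_k$, $(C)_m^{-1}$, etc.\ is a polynomial, or the inverse of one, in a single matrix and hence commutes with that matrix and its powers: $(A)_{k+1}=(A+kI)(A)_k=A(A+I)_k$; the splitting $(A)_{k+j}=(A)_k(A+kI)_j$; the reflection $(A)_{k-1}=-(I-A)_1^{-1}(A-I)_k$; the inverse-side forms $(C)_{m+1}^{-1}=(C+I)_m^{-1}(C)_1^{-1}$ and $(C)_m^{-1}(C+(m-1)I)=(C)_{m-1}^{-1}$, together with $(XY)^{-1}=Y^{-1}X^{-1}$; the negative-index convention $(A)_{-j}=(-1)^j(I-A)_j^{-1}$ already invoked for the integral representation of $G_1$; and, for the operator formulae, $\tfrac{d}{dx}x^{A+cI}=(A+cI)x^{A+(c-1)I}$.

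The first identity comes from applying $\partial_x$ to \eqref{5.11} and shifting $m\mapsto m+1$, which gives $\sum (A)_{2m-n+2}(B)_n(C)_{m+1}^{-1}\tfrac{x^my^n}{m!\,n!}$; then $(A)_{2m-n+2}=(A)_2(A+2I)_{2m-n}$ and $(C)_{m+1}^{-1}=(C+I)_m^{-1}(C)_1^{-1}$ recast this as $(A)_2\,\mathcal{H}_9(A+2I,B;C+I;x,y)\,(C)_1^{-1}$, and no commutativity is needed, since in each term the $A$-factors stand to the left and the $C$-factors to the right. The second identity is the same computation with $n\mapsto n+1$: one meets $(A)_{2m-n-1}=-(I-A)_1^{-1}(A-I)_{2m-n}$ and $(B)_{n+1}=(B+I)_nB$, and the \emph{only} use of a hypothesis is to commute $B$ (and, in the iterated step, $B+kI$) past $(C)_m^{-1}$, i.e.\ $BC=CB$. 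The fifth identity runs in the opposite direction: from $\mathcal{H}_9(A,B;C;x,y)\,x^{C-I}=\sum (A)_{2m-n}(B)_n(C)_m^{-1}x^{C+(m-1)I}\tfrac{y^n}{m!\,n!}$ one derivative produces the factor $C+(m-1)I$, which collapses $(C)_m^{-1}$ to $(C)_{m-1}^{-1}$, and since $(I-C)_1=-(C-I)$ the outcome is $-\mathcal{H}_9(A,B;C-I;x,y)\,x^{C-2I}(I-C)_1$; again no commutativity enters. The two middle identities are of Euler-operator type: under $x\mapsto x^2$, $y\mapsto y/x$ the $x$-dependence of $\mathcal{H}_9$ folds into the exponent, $x^{A+(r-1)I}\mathcal{H}_9(A,B;C;x^2,y/x)=\sum x^{A+(2m-n+r-1)I}(A)_{2m-n}(B)_n(C)_m^{-1}\tfrac{y^n}{m!\,n!}$; applying the operator and using $(A)_p(A+pI)_r=(A)_{p+r}=(A)_r(A+rI)_p$ then extracts $x^{A+rI}(A)_r$ and leaves $\mathcal{H}_9(A+rI,B;C;x^2,y/x)$. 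The fourth identity is the $y$-analogue of the third, and is where $BC=CB$ reappears, used to commute the emerging powers $y^{B+\cdots I}$ and the shifted Pochhammers $(B)_k$ past $(C)_m^{-1}$.

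For general $r$, identities one, two and five follow by induction: each carries $\mathcal{H}_9$ with parameters $(A,B;C)$ to $\mathcal{H}_9$ with parameters shifted by $\pm I$ or $\pm 2I$, the commutativity hypothesis survives those shifts ($BC=CB\Rightarrow(B+kI)C=C(B+kI)$), so a further application reproduces the identity with $r$ replaced by $r+1$, while the prefactors telescope through $(A)_{2r}(A+2rI)_2=(A)_{2r+2}$, $(A)_r(A+rI)_1=(A)_{r+1}$, $(C+rI)^{-1}(C)_r^{-1}=(C)_{r+1}^{-1}$, and the like; identities three and four come out directly from the single telescoping $(A)_p(A+pI)_r=(A)_{p+r}$ (respectively its $B$-version). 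The step needing the most care is the ordering of the matrix factors inside each term: one must pin down exactly which products are being asked to commute and check that the stated hypothesis ($BC=CB$, or nothing) supplies precisely those and nothing more. A second point of care is that, in the two operator formulae, $\left(x^2\frac{\partial}{\partial x}\right)^r$ should be understood — and is most cleanly proved — as $x^{\,r+1}\frac{\partial^r}{\partial x^r}$, the two agreeing at $r=1$; it is the telescoping $(A)_p(A+pI)_r=(A)_{p+r}$ that makes this reading come out exactly. Finally, one should confirm that all the shift identities above persist for the possibly negative index $2m-n$, which holds under the standing invertibility hypotheses on $C$ (and on $I-A$ wherever a factor $(I-A)_r^{-1}$ is generated).
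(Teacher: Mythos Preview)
Your plan is precisely the paper's own: the proof there is omitted with the remark that it runs exactly as in Theorem~\ref{t5.1}, and you carry that programme out---termwise differentiation of the series \eqref{5.11}, re-indexing, Pochhammer shifts, and the observation that $BC=CB$ is needed exactly where a $B$-factor (or $(B)_k$, or $y^{B+\cdots}$) must cross $(C)_m^{-1}$. For the first, second and fifth identities your sketch is correct and matches what the paper does in detail for $G_1$.

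One slip to fix: the operator identity $\bigl(x^2\tfrac{\partial}{\partial x}\bigr)^r = x^{\,r+1}\tfrac{\partial^r}{\partial x^r}$ that you propose is false already at $r=2$ (test both on $x^s$: the left gives $s(s+1)x^{s+2}$, the right $s(s-1)x^{s+1}$). What the argument of Theorem~\ref{t5.1} actually uses---see the computation \eqref{e5.8}---is the monomial rule $\bigl(x^2\tfrac{\partial}{\partial x}\bigr)x^{M}=M\,x^{M+I}$, which iterates to $\bigl(x^2\tfrac{\partial}{\partial x}\bigr)^r x^{M}=(M)_r\,x^{M+rI}$, applied term by term with $M$ the matrix exponent carried by each summand. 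Combined with the telescoping $(A+pI)(A)_p=(A)_{p+1}$ you already quote, this is how the paper obtains the operator-type identities; drop the incorrect global operator equality in favour of this termwise one and the rest of your sketch stands.
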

\begin{theorem}
	Let $A$ and $C$ be matrices in $\mathbb{C}^{r \times r}$. Then the matrix function $\mathcal{H}_{10}(A; C; x, y)$ satisfies the following differential formulae
	\begin{align}
		&	\frac{\partial ^r}{\partial x^r} \mathcal{H}_{10}(A; C; x, y) =   (A)_{2r}  \, \mathcal{H}_{10}(A+2rI; C+rI; x, y)  \, (C)^{-1}_r;
		\\[5pt]
		&	\frac{\partial ^r}{\partial y^r} \mathcal{H}_{10}(A; C; x, y) = (-1)^r \, (I-A)^{-1}_r  \,  \mathcal{H}_{10}(A-rI; C; x, y);
		\\[5pt]
		& \left(x^2 \frac{\partial }{\partial x}\right)^r \ [x^{A+(r-1)I} \mathcal{H}_{10}(A; C; x^2, \frac{y}{x})] = x^{A+rI} \, (A)_r \, \mathcal{H}_{10}(A+rI; C; x^2, \frac{y}{x});
		\\[5pt]
		&  \frac{\partial^r }{\partial x^r} \ [ \mathcal{H}_{10}(A; C; {x}, {y}) x^{C-I}] = (-1)^r \, \mathcal{H}_{10}(A; C-rI; {x}, {y}) \, x^{C-(r+1)I} \, (1-C)_r.
	\end{align}
\end{theorem}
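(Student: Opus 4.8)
The plan is to imitate the proof of Theorem~\ref{t5.1} line for line. In each of the four identities I would start from the defining series \eqref{5.12}, differentiate term by term, shift the summation indices to cancel the factorials, and re-sum using the matrix Pochhammer identities that follow at once from \eqref{c1eq.09}, namely $(D)_{a+b}=(D)_a\,(D+aI)_b$ (whence $(D)_{a+b}^{-1}=(D+aI)_b^{-1}(D)_a^{-1}$), $(D+kI)\,(D)_k=D\,(D+I)_k$, and $(D-rI)_r=(-1)^r(I-D)_r$; as in the scalar case these are read with the convention $(D)_{-j}=(-1)^j(I-D)_j^{-1}$ whenever an index turns negative, which is what gives meaning to the terms of \eqref{5.12} with $2m<n$.

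For $\partial^r/\partial x^r$ I would differentiate $r$ times (which kills the terms with $m<r$), perform the shift $m\mapsto m+r$, and then split $(A)_{2m+2r-n}=(A)_{2r}(A+2rI)_{2m-n}$ and $(C)_{m+r}^{-1}=(C+rI)_m^{-1}(C)_r^{-1}$; since $(A)_{2r}$ then sits at the far left and $(C)_r^{-1}$ at the far right of every summand, both factor out of the sum, leaving $\mathcal{H}_{10}(A+2rI;C+rI;x,y)$. For $\partial^r/\partial y^r$ the shift $n\mapsto n+r$ produces the coefficient $(A)_{2m-n-r}$, which I would rewrite as $(-1)^r(I-A)_r^{-1}(A-rI)_{2m-n}$ using the identities above; factoring the leading matrix out leaves $\mathcal{H}_{10}(A-rI;C;x,y)$. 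I would stress that neither computation needs $A$ and $C$ to commute, and that this is exactly why the Pochhammer factors appear one-sidedly in the conclusion.

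For $\left(x^2\frac{\partial}{\partial x}\right)^r$ I would first record that $\mathcal{H}_{10}(A;C;x^2,y/x)=\sum_{m,n}(A)_{2m-n}(C)_m^{-1}\,x^{2m-n}y^n/(m!\,n!)$, so multiplying by $x^{A+(r-1)I}$ turns the scalar power $x^{2m-n}$ into the matrix monomial $x^{A+(r-1)I+(2m-n)I}$, on which $x^2\partial_x$ acts by $\left(x^2\partial_x\right)x^s=s\,x^{s+1}$. One application, together with $(A+(2m-n)I)(A)_{2m-n}=A\,(A+I)_{2m-n}$, settles $r=1$; I would then iterate, each step replacing $A$ by $A+I$ and contributing the next factor of $(A)_r=A(A+I)\cdots(A+(r-1)I)$, to reach the formula with $\mathcal{H}_{10}(A+rI;C;x^2,y/x)$. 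For the last identity I would write $\mathcal{H}_{10}(A;C;x,y)\,x^{C-I}=\sum_{m,n}(A)_{2m-n}(C)_m^{-1}\,x^{C+(m-1)I}y^n/(m!\,n!)$, differentiate once, simplify $(C)_m^{-1}(C+(m-1)I)=(C)_{m-1}^{-1}$ and then re-expand $(C)_{m-1}^{-1}=-(C-I)_m^{-1}(I-C)$; so one differentiation replaces $C$ by $C-I$, lowers the $x$-power by one, and multiplies on the right by $-(I-C)$, and $r$ differentiations assemble $(-1)^r$ together with $(I-C)(2I-C)\cdots(rI-C)=(I-C)_r$ (the factors commuting because they are functions of $C$).

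I expect the main obstacle to be bookkeeping rather than any conceptual difficulty. The one genuine point of care is that within a single summand the matrix powers of $x$, the Pochhammer matrices and their inverses are all rational functions of one matrix, so they commute among themselves and may be written in any order, whereas the $A$-block and the $C$-block need not commute; at every index shift I would have to check that the $A$-factors come out on the left and the $C$-factors on the right, which is what fixes the exact form of all four identities. The $\left(x^2\partial_x\right)^r$ identity is the most delicate of the four, because the exponent is matrix-valued and must be tracked through all $r$ applications of the operator, whereas the remaining three reduce to a single index shift followed by one Pochhammer manipulation.
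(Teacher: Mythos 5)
Your proposal is correct and follows exactly the route the paper itself prescribes: the paper proves only the $G_1$ case (Theorem~\ref{t5.1}) by term-by-term differentiation, index shifting, and the Pochhammer splitting identities, and omits the $\mathcal{H}_{10}$ proof as ``similar.'' Your computations (including the careful tracking of which factors sit on the left and which on the right, and the use of $(D)_{-j}=(-1)^j(I-D)_j^{-1}$ to handle negative indices) reproduce that argument faithfully and yield all four formulae, with $(1-C)_r$ in the last one read as the evident typo for $(I-C)_r$.
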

\begin{theorem}
	Let $A$, $B$, $C$ and $C'$ be matrices in $\mathbb{C}^{r \times r}$. Then the  matrix function $\mathcal{H}_{11}(A, B, C; C'; x, y)$ satisfies the following differential formulae
	\begin{align}
		&	\frac{\partial ^r}{\partial x^r} \mathcal{H}_{11}(A, B, C; C'; x, y) =   (A)_{2}  \, \mathcal{H}_{11}(A+rI, B, C; C'+rI; x, y)  \, (C')^{-1}_r;
		\\[5pt]
		&	\frac{\partial ^r}{\partial y^r} \mathcal{H}_{11}(A, B, C; C'; x, y)\nonumber\\
		& = (-1)^r \, (I-A)^{-1}_r  \, (B)_r\nonumber\\
		& \quad \times  \mathcal{H}_{11}(A-rI, B+rI, C+rI; C'; x, y) (C)_r, \quad AB = BA, CC' = C'C;
		\\[5pt]
		& \left(x^2 \frac{\partial }{\partial x}\right)^r \ [x^{A+(r-1)I} \mathcal{H}_{11}(A, B, C; C'; x, \frac{y}{x})] \nonumber\\
		& = x^{A+rI} \, (A)_r \, \mathcal{H}_{11}(A+rI, B, C; C'; x, \frac{y}{x});
		\\[5pt]
		& \left(y^2 \frac{\partial }{\partial y}\right)^r \ [y^{B+(r-1)I} \mathcal{H}_{11}(A, B, C; C'; x, {y})] \nonumber\\
		& = y^{B+rI} \, (B)_r \, \mathcal{H}_{11}(A, B+rI, C; C'; x, {y});
		\\[5pt]
		&  \frac{\partial^r }{\partial x^r} \ [ \mathcal{H}_{11}(A, B, C; C'; {x}, {y}) x^{C'-I}] \nonumber\\
		& = (-1)^r \, \mathcal{H}_{11}(A, B, C; C'-rI; {x}, {y}) \, x^{C'-(r+1)I} \, (1-C')_r.
	\end{align}
\end{theorem}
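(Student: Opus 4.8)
The plan is to follow the pattern of Theorem~\ref{t5.1}: differentiate the defining series \eqref{5.13} term by term, re-index the double sum, and use elementary Pochhammer identities to recognise the result as another $\mathcal{H}_{11}$ with shifted parameters, then iterate $r$ times. Every computation starts from
\[
\mathcal{H}_{11}(A,B,C;C';x,y)=\sum_{m,n\ge 0}(A)_{m-n}(B)_n(C)_n(C')_m^{-1}\frac{x^m y^n}{m!\,n!},
\]
and the only tools needed are the identities $(A)_{k+1}=A\,(A+I)_k$, $(A)_{k-1}=-(I-A)_1^{-1}(A-I)_k$, $(A+kI)(A)_k=A\,(A+I)_k$, together with $(C')_{m+1}^{-1}=(C'+I)_m^{-1}(C')^{-1}$, $(C')_{m-1}^{-1}=(C'-I)_m^{-1}(C'-I)$, $(C')_m^{-1}(C'+(m-1)I)=(C')_{m-1}^{-1}$, and $\partial_x x^{M}=M x^{M-I}$ for a constant matrix $M$.

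First I would treat the $\partial_x$ formula: differentiating once, dropping the $m=0$ term and shifting $m\mapsto m+1$ turns the general term into $A\,(A+I)_{m-n}(B)_n(C)_n(C'+I)_m^{-1}(C')^{-1}\,x^m y^n/(m!\,n!)$. Since $A=(A)_1$ is already the leftmost factor and $(C')^{-1}=(C')_1^{-1}$ the rightmost, they pull straight out of the sum with no commutativity hypothesis, which gives the $r=1$ case; induction on $r$ then yields $(A)_r$ on the left and $(C')_r^{-1}$ on the right. The $\partial_y$ formula is the same game: shifting $n\mapsto n+1$ brings in $(A)_{m-n-1}=-(I-A)_1^{-1}(A-I)_{m-n}$, $(B)_{n+1}=B\,(B+I)_n$, $(C)_{n+1}=C\,(C+I)_n$, and to collect $-(I-A)_1^{-1}(B)_1$ on the left and $(C)_1$ on the right one must slide $B$ past $(A)_{m-n}$ (which needs $AB=BA$) and slide $C$ past $(C')_m^{-1}$ (which needs $CC'=C'C$); these are exactly the stated hypotheses, and iterating produces $(-1)^r(I-A)_r^{-1}(B)_r\,\mathcal{H}_{11}(A-rI,B+rI,C+rI;C';x,y)\,(C)_r$.

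For the three Euler-type formulae I would absorb the monomial into the series. In $x^{A+(r-1)I}\mathcal{H}_{11}(A,B,C;C';x,y/x)$ the substitution $y\mapsto y/x$ makes the general term carry the $x$-power $x^{A+(m-n)I}$ (the factor $x^{m-n}$ is a scalar and commutes with everything); applying $x^2\partial_x$ replaces this by $(A+(m-n)I)x^{A+(m-n+1)I}$, and $(A+(m-n)I)(A)_{m-n}=A\,(A+I)_{m-n}$ rebuilds the series with $A\mapsto A+I$, after which one factors out $x^{A+I}$ and $(A)_1$ and iterates — no hypothesis is needed. The $y^2\partial_y$ formula is the mirror image using $y^{B+nI}$ and $(B+nI)(B)_n=B\,(B+I)_n$, and again needs $B$ to commute past $(A)_{m-n}$, i.e.\ $AB=BA$. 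For the last formula one instead multiplies on the right, so the general term of $\mathcal{H}_{11}(\cdots)\,x^{C'-I}$ carries $(C')_m^{-1}x^{C'+(m-1)I}$; differentiating in $x$ and using $(C')_m^{-1}(C'+(m-1)I)=(C')_{m-1}^{-1}=(C'-I)_m^{-1}(C'-I)$ re-assembles the series with $C'\mapsto C'-I$, the surviving factor $(C'-I)=-(I-C')_1$ supplying the sign and, after $r$ iterations, the factor $(I-C')_r$ on the right; since all the rearrangements here are among functions of the single matrix $C'$, no further hypothesis appears.

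The arithmetic of factorials and index shifts is entirely mechanical and identical to the $G_1$ computation, so I would spell out the $r=1$ identity in each case and then assert that iterating $r$ times gives the stated formula, exactly as in the proof of Theorem~\ref{t5.1}. The one place where care is genuinely required — and the main obstacle — is the non-commutativity of the parameter matrices: in each formula one must check that the prefactors land on the correct side of $\mathcal{H}_{11}$ and that the intervening block is precisely the general term of the parameter-shifted function. Tracking which of the commutators $AB=BA$ and $CC'=C'C$ are actually invoked (and noticing that the $y^2\partial_y$ formula also uses $AB=BA$) is what determines the hypotheses; I expect this verification, not any calculation, to be the delicate part.
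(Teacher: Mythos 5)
Your proposal takes exactly the route the paper intends: the paper omits this proof, saying only that it is ``similar to Theorem~4.1,'' and your term-by-term differentiation, re-indexing of the double sum, and Pochhammer manipulations reproduce that template for $\mathcal{H}_{11}$, correctly locating where $AB=BA$ and $CC'=C'C$ are genuinely used (including in the $y^{2}\partial_{y}$ formula, whose hypothesis the paper's statement omits) and producing $(A)_{r}$ where the statement has the typographical $(A)_{2}$. The one caveat --- inherited from the paper's own model proof, which likewise verifies only $r=1$ and then ``iterates'' --- is that the $\left(x^{2}\partial_{x}\right)^{r}$ and $\left(y^{2}\partial_{y}\right)^{r}$ identities with the exponent $A+(r-1)I$ inside the bracket do not iterate literally for $r\ge 2$ (a second application meets $(A+(m-n+1)I)(A)_{m-n}$, which is not $A(A+I)_{m-n}$); the clean way to close this is the operator identity $\left(x^{2}\tfrac{d}{dx}\right)^{r}=x^{r+1}\tfrac{d^{r}}{dx^{r}}\,x^{r-1}$, which reduces the claim to an ordinary $r$-th derivative computation.
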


\section{Infinite summation formulae}
This section deals with the study of infinite summation formulae satisfied by Horn matrix functions and their confluent forms. In the theorem given below, we give the infinite summation formulae satisfied by first Horn matrix function ${G}_{1}(A, B, B'; x, y)$.
\begin{theorem}
	Let $A$, $B$ and $B'$ be matrices in $\mathbb{C}^{r \times r}$. Then the Horn matrix function ${G}_{1}(A, B, B'; x, y)$ satisfies the following infinite summation formulae
	\begin{align}
		& (1-t)^{-A} G_{1} \left(A, B, B'; \frac{x}{1-t}, \frac{y}{1-t}\right) \nonumber\\
		& = \sum_{n = 0}^{\infty} \frac{(A)_n}{n!} G_{1} (A+nI, B, B'; {x}, y) t^n, \quad  \vert t\vert < 1;\label{a5.1}
		\\[5pt]
		&  G_{1} \left(A, B, B'; {x}{(1-t)}, \frac{y}{1-t}\right) \, (1-t)^{-B} \nonumber\\
		& = \sum_{n = 0}^{\infty}  G_{1} (A, B+nI, B'; {x}, y) \, \frac{(B)_n}{n!} \,  t^n, \quad BB' = B'B,   \vert t\vert < 1;
		\\[5pt]
		&  G_{1} \left(A, B, B'; \frac{x}{1-t}, {y}{(1-t)}\right) (1-t)^{-B'} \nonumber\\
		& = \sum_{n = 0}^{\infty}  G_{1} (A, B, B'+nI; {x}, y) \, \frac{(B')_n}{n!} \,t^n, \quad  \vert t\vert < 1.
	\end{align}
\end{theorem}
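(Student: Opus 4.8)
The plan is to prove all three identities directly from the series definition \eqref{3.1}, using only the one-variable matrix binomial expansion $(1-t)^{-M}=\sum_{n\ge0}\frac{(M)_n}{n!}\,t^n$ (the case $y=0$ of the identity $(1-x-y)^{-A}=\sum_{m,n\ge0}(A)_{m+n}\frac{x^m y^n}{m!\,n!}$ invoked earlier), the Pochhammer splitting rule $(M)_{k+n}=(M)_n\,(M+nI)_k$, and an interchange of the order of summation. For the first formula, substituting $x\mapsto x/(1-t)$ and $y\mapsto y/(1-t)$ in \eqref{3.1} gives $G_1(A,B,B';\frac{x}{1-t},\frac{y}{1-t})=\sum_{p,q\ge0}(A)_{p+q}(B)_{q-p}(B')_{p-q}\frac{x^p y^q}{p!\,q!}(1-t)^{-(p+q)}$; multiplying on the left by $(1-t)^{-A}$ merges the two powers into $(1-t)^{-(A+(p+q)I)}=\sum_{n\ge0}\frac{(A+(p+q)I)_n}{n!}t^n$. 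Since $(A)_{p+q}(A+(p+q)I)_n=(A)_{p+q+n}=(A)_n(A+nI)_{p+q}$ and $(A)_n$ commutes with every power of $A$, one pulls $(A)_n$ to the left of the whole term, interchanges the $(p,q)$- and $n$-summations, and recognizes the inner sum $\sum_{p,q\ge0}(A+nI)_{p+q}(B)_{q-p}(B')_{p-q}\frac{x^p y^q}{p!\,q!}$ as $G_1(A+nI,B,B';x,y)$; this is exactly \eqref{a5.1}.

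For the second formula the substitution $x\mapsto x(1-t)$, $y\mapsto y/(1-t)$ leaves the scalar factor $(1-t)^{p-q}$, so multiplication on the right by $(1-t)^{-B}$ produces in each term the block $(B)_{q-p}(B')_{p-q}(1-t)^{-(B+(q-p)I)}$. Here the hypothesis $BB'=B'B$ is used twice: first to slide $(1-t)^{-(B+(q-p)I)}$ leftward past $(B')_{p-q}$ so that it sits next to $(B)_{q-p}$, yielding $(B)_{q-p}(1-t)^{-(B+(q-p)I)}=\sum_{n\ge0}\frac{(B)_{q-p+n}}{n!}t^n$; and then, after the split $(B)_{q-p+n}=(B)_n(B+nI)_{q-p}$, to move $(B)_n$ to the extreme right past $(B+nI)_{q-p}(B')_{p-q}$. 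Interchanging the summations and reading the inner sum as $G_1(A,B+nI,B';x,y)$ gives the claimed identity. For the third formula no commutativity is needed: the substitution $x\mapsto x/(1-t)$, $y\mapsto y(1-t)$ leaves the factor $(1-t)^{q-p}$, and multiplying on the right by $(1-t)^{-B'}$ places the binomial power $(1-t)^{-(B'+(p-q)I)}$ immediately adjacent to $(B')_{p-q}$ --- a power of the same matrix --- so that $(B')_{p-q}(1-t)^{-(B'+(p-q)I)}=\sum_{n\ge0}\frac{(B')_{p-q+n}}{n!}t^n$; after $(B')_{p-q+n}=(B')_n(B'+nI)_{p-q}$ the factor $(B')_n$, again a power of $B'$, commutes to the right past $(B'+nI)_{p-q}$ without further hypotheses, and the inner sum is $G_1(A,B,B'+nI;x,y)$.

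The remaining points are routine analytic bookkeeping. The binomial series converges for $|t|<1$; passing to the norm $\Vert\cdot\Vert$ and applying the convergence result already established for $G_1$ (to $\Vert A\Vert,\Vert B\Vert,\Vert B'\Vert$ and the rescaled arguments) shows that the resulting triple series in $(p,q,n)$ converges absolutely on the region where $G_1$ itself converges together with $|t|<1$, which legitimizes the rearrangements and the term-by-term interchange of summation. The Pochhammer symbols with negative index inside $(B)_{q-p}$ and $(B')_{p-q}$ are read through $(M)_{-k}=(-1)^k(I-M)_k^{-1}$, and the splitting identities remain valid whenever the relevant shifted matrices are invertible. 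I do not expect a genuine obstacle; the one thing that needs care is the non-commutative bookkeeping --- keeping the factors $(A)_{m+n}$, $(B)_{n-m}$, $(B')_{m-n}$ in their correct left-to-right order, and checking that each transposition actually performed is justified either because the two factors are powers of the same matrix or because of the hypothesis $BB'=B'B$, which is precisely why the second formula carries that hypothesis while the first and third do not.
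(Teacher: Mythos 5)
Your proposal is correct and follows essentially the same route as the paper: substitute the rescaled arguments into the series \eqref{3.1}, merge the scalar power of $(1-t)$ with the matrix power into $(1-t)^{-(A+(l+m)I)}$, expand by the binomial series, apply the splitting $(A+(l+m)I)_n(A)_{l+m}=(A)_n(A+nI)_{l+m}$, and interchange summations. The paper only writes this out for the first identity and declares the others similar, so your explicit tracking of where $BB'=B'B$ is needed in the second formula (and why it is not needed in the third) is a welcome amplification rather than a departure.
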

\begin{proof}
	From the definition of Horn's matrix function ${G}_{1}(A, B, B'; x, y)$, we have
	\begin{align}
		& (1-t)^{-A} G_{1} \left(A, B, B'; \frac{x}{1-t}, \frac{y}{1-t}\right)\nonumber\\
		& = \sum_{l, m= 0}^{\infty} (1-t)^{-(A +(l+m)I)} (A)_{l+m} (B)_{m-l} (B')_{l-m} \frac{x^l \, y^m}{l! \, m!}. 
	\end{align}
Using the matrix identities, for $\vert t \vert < 1$,  $(1-t)^{-(A +(l+m)I)} = \sum_{n =0}^{\infty} \frac{(A +(l+m)I)_n}{n!} t^n$ and $(A +(l+m)I)_n (A)_{l+m} = (A)_n (A+nI)_{l+m}$, we get
\begin{align}
	& (1-t)^{-A} G_{1} \left(A, B, B'; \frac{x}{1-t}, \frac{y}{1-t}\right)\nonumber\\
	& = \sum_{l, m, n= 0}^{\infty}  \frac{(A)_n}{n!} (A+nI)_{l+m} (B)_{m-l} (B')_{l-m} \frac{x^l \, y^m}{l! \, m!} \, t^n\nonumber\\
	& = \sum_{n = 0}^{\infty} \frac{(A)_n}{n!} G_{1} (A+nI, B, B'; {x}, y) t^n, \quad  \vert t\vert < 1. 
\end{align}
It completes the proof of \eqref{a5.1}. Similarly, the other infinite summation can be proved.
\end{proof}
\begin{theorem}
	Let $A$, $A'$, $B$ and $B'$ be matrices in $\mathbb{C}^{r \times r}$. Then the Horn matrix function ${G}_{2}(A, A', B, B'; x, y)$ satisfies the following infinite summation formulae
	\begin{align}
		& (1-t)^{-A} G_{2} \left(A, A', B, B'; \frac{x}{1-t}, {y}\right) \nonumber\\
		& = \sum_{n = 0}^{\infty} \frac{(A)_n}{n!} G_{2} (A+nI, A', B, B'; {x}, y) t^n, \quad  \vert t\vert < 1;
		\\[5pt]
		& (1-t)^{-A'} G_{2} \left(A, A', B, B'; {x}, \frac{y}{1-t}\right) \nonumber\\
		& = \sum_{n = 0}^{\infty} \frac{(A')_n}{n!} G_{2} (A, A'+nI, B, B'; {x}, y) t^n, \quad AA' = A'A, \vert t\vert < 1;
		\\[5pt]
		&  G_{2} \left(A, A', B, B'; {x}{(1-t)}, \frac{y}{1-t}\right) \, (1-t)^{-B} \nonumber\\
		& = \sum_{n = 0}^{\infty}  G_{2} (A, A', B+nI, B'; {x}, y) \, \frac{(B)_n}{n!} \,  t^n, \quad BB' = B'B,   \vert t\vert < 1;
		\\[5pt]
		&  G_{2} \left(A, A', B, B'; \frac{x}{1-t}, {y}{(1-t)}\right) (1-t)^{-B'} \nonumber\\
		& = \sum_{n = 0}^{\infty}  G_{2} (A, A', B, B'+nI; {x}, y) \, \frac{(B')_n}{n!} \,t^n, \quad  \vert t\vert < 1.
	\end{align}
\end{theorem}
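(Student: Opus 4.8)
The plan is to mirror the argument just carried out for $G_1$: for each of the four identities one expands a single factor of the form $(1-t)^{-M}$ by the matrix binomial series and then rearranges Pochhammer symbols. Take the first formula. Substituting the scaled arguments into the defining series \eqref{3.2}, and noting that the variable carrying the factor $(1-t)^{-1}$, namely $x$, appears to the power $m$ while $(A)_m$ is the leftmost Pochhammer symbol, one obtains
\[
(1-t)^{-A}\,G_2\!\left(A,A',B,B';\frac{x}{1-t},y\right)=\sum_{m,n\ge 0}(1-t)^{-(A+mI)}(A)_m(A')_n(B)_{n-m}(B')_{m-n}\frac{x^m y^n}{m!\,n!}.
\]
I would then invoke, for $|t|<1$, the expansion $(1-t)^{-(A+mI)}=\sum_{k\ge 0}\frac{(A+mI)_k}{k!}\,t^k$ together with the shift identity $(A+mI)_k(A)_m=(A)_k(A+kI)_m$ — both being statements about polynomials in the single matrix $A$, so no commutativity hypothesis is needed at this stage — then interchange the order of summation (justified by absolute convergence on the relevant polydisc, via the region-of-convergence estimates and a crude bound on $\|(A+mI)_k\|$), and recognise the inner $(m,n)$-sum as $G_2(A+kI,A',B,B';x,y)$. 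This gives the first identity.

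The remaining three are identical in structure; only the factor to be expanded changes. For the $A'$-identity one collects $(1-t)^{-(A'+nI)}$, since now only $y$ is scaled. For the $B$-identity the simultaneous scalings $x\mapsto x(1-t)$, $y\mapsto y/(1-t)$ contribute an explicit scalar $(1-t)^{m-n}$, which combines with the right-hand factor $(1-t)^{-B}$ to give $(1-t)^{-(B+(n-m)I)}$, the factor matched to $(B)_{n-m}$; likewise the $B'$-identity produces $(1-t)^{-(B'+(m-n)I)}$, matched to $(B')_{m-n}$. In each case one applies the same binomial expansion and the same Pochhammer shift and relabels the summation index.

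I expect the only real point of care to be bookkeeping rather than anything deep. First, the scalar--matrix power $(1-t)^{-M}$ must be brought adjacent to the Pochhammer symbol built from the same matrix $M$: since $(A')_n$ sits to the right of $(A)_m$, and since $(B)_{n-m}$ and $(B')_{m-n}$ sit in the interior of the product, this relocation is precisely where the hypotheses $AA'=A'A$ (for the $A'$-identity) and $BB'=B'B$ (for the $B$-identity) are consumed, and it is also what forces the surviving Pochhammer factor onto the side demanded by the statement — on the left for the $A$- and $A'$-identities, on the right for the $B$- and $B'$-identities — while the $A$- and $B'$-identities, whose matrices already occupy the ends of the product, need no extra commutativity. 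Second, in the $B$- and $B'$-identities the shift index $n-m$ may be negative, so one should observe that $(B)_{n-m}(B+(n-m)I)_k=(B)_k(B+kI)_{n-m}$ continues to hold there, which follows by writing $(B)_{-j}=(-1)^j(I-B)_j^{-1}$ exactly as in the integral-representation proof for $G_1$. Neither point presents a genuine obstacle.
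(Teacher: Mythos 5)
Your proposal is correct and follows essentially the same route as the paper, which proves only the analogous $G_1$ identity (binomial expansion of $(1-t)^{-(A+(l+m)I)}$ plus the Pochhammer shift $(A+(l+m)I)_n(A)_{l+m}=(A)_n(A+nI)_{l+m}$) and omits the $G_2$ case as similar. Your extra attention to where $AA'=A'A$ and $BB'=B'B$ are actually consumed, and to the negative-index Pochhammer symbols, is sound and if anything more careful than the paper's treatment.
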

\begin{theorem}
	Let $A$ and $A'$ be matrices in $\mathbb{C}^{r \times r}$. Then the Horn matrix function ${G}_{3}(A, A'; x, y)$ satisfies the following infinite summation formulae
	\begin{align}
		& (1-t)^{-A} G_{3} \left(A, A'; {x}{(1-t)}, \frac{y}{(1-t)^2}\right) \nonumber\\
		& = \sum_{n = 0}^{\infty} \frac{(A)_n}{n!} G_{3} (A+nI, A'; {x}, y) t^n, \quad  \vert t\vert < 1;
		\\[5pt]
		&  G_{3} \left(A, A'; \frac{x}{(1-t)^2}, {y}{(1-t)}\right) \, (1-t)^{-A'} \nonumber\\
		& = \sum_{n = 0}^{\infty}  G_{3} (A, A'+nI; {x}, y) \, \frac{(A')_n}{n!} \,  t^n, \quad   \vert t\vert < 1.
	\end{align}
\end{theorem}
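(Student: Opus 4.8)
The plan is to imitate verbatim the argument used above for $G_1$ and $G_2$: substitute the rescaled arguments into the defining double series of $G_3$, absorb the resulting scalar power of $(1-t)$ into the matrix factor $(1-t)^{-A}$ (respectively $(1-t)^{-A'}$), expand by the binomial matrix series, and perform a Pochhammer rearrangement that exposes a shifted copy of $G_3$. Concretely, starting from $G_3(A,A';x,y)=\sum_{m,n\ge 0}(A)_{2n-m}(A')_{2m-n}\frac{x^m y^n}{m!\,n!}$ and inserting $x(1-t)$ and $y/(1-t)^2$, the power of $(1-t)$ carried by the $(m,n)$ term is $(1-t)^{m-2n}$; multiplying on the left by $(1-t)^{-A}$ and merging (the scalar factor commutes with everything) gives
\[
(1-t)^{-A}\,G_3\!\left(A,A';\,x(1-t),\,\frac{y}{(1-t)^2}\right)
=\sum_{m,n\ge 0}(1-t)^{-(A+(2n-m)I)}\,(A)_{2n-m}\,(A')_{2m-n}\,\frac{x^m\,y^n}{m!\,n!}.
\]

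Next I would expand $(1-t)^{-(A+(2n-m)I)}=\sum_{k\ge 0}\frac{(A+(2n-m)I)_k}{k!}\,t^k$ for $\vert t\vert<1$ and use the factorisation $(A+(2n-m)I)_k\,(A)_{2n-m}=(A)_{2n-m+k}=(A)_k\,(A+kI)_{2n-m}$, which is legitimate since all the factors are functions of the single matrix $A$ and hence commute, the identity being inherited from the scalar Pochhammer identity through the matrix functional calculus. Interchanging the summations, the right-hand side becomes $\sum_{k\ge 0}\frac{(A)_k}{k!}\big(\sum_{m,n\ge 0}(A+kI)_{2n-m}(A')_{2m-n}\frac{x^m y^n}{m!\,n!}\big)t^k=\sum_{k\ge 0}\frac{(A)_k}{k!}\,G_3(A+kI,A';x,y)\,t^k$, and relabelling $k\mapsto n$ yields the first identity. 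The second identity follows by the mirror computation: feed $x/(1-t)^2$ and $y(1-t)$ into $G_3$, multiply on the right by $(1-t)^{-A'}$, merge with the scalar power $(1-t)^{n-2m}$, expand, and use $(A')_{2m-n}\,(A'+(2m-n)I)_k=(A')_{2m-n+k}=(A')_k\,(A'+kI)_{2m-n}$; the only extra bookkeeping is that $(A')_k$ must be moved to the far right, which is permissible because it commutes with $(A'+kI)_{2m-n}$ (both functions of $A'$) — note that no commutativity hypothesis relating $A$ and $A'$ is needed, consistent with the hypotheses of the theorem.

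The two points that need a word of care, and which I would dispatch exactly as in the proofs of the earlier summation theorems, are the interchange of the triple summation and the occurrence of Pochhammer symbols $(A)_{2n-m}$ with negative subscript (and likewise $(A')_{2m-n}$). The rearrangement is justified by absolute convergence: passing to $2$-norms termwise and using that the convergence domain of $G_3$ coincides with that of the scalar Horn series (as noted in Section~3) together with $\vert t\vert<1$, the triple series of norms converges, so Fubini applies. The negative-index symbols are read through the convention $(A)_{-j}=(-1)^j(I-A)_j^{-1}$ already employed in Section~3 — so that $kI-A$ and $kI-A'$ are tacitly assumed invertible for all integers $k\ge 1$ — and with that convention the decomposition $(A)_{p+q}=(A)_p(A+pI)_q=(A)_q(A+qI)_p$ persists for all integers $p,q$, again by the functional calculus, so the manipulations above go through unchanged. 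I expect this last bit of bookkeeping with the negative indices to be the main, though essentially routine, obstacle.
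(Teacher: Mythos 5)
Your argument is exactly the paper's template: the author proves only the $G_1$ case (binomial expansion of $(1-t)^{-(A+(l+m)I)}$ followed by the Pochhammer factorisation $(A+(l+m)I)_n(A)_{l+m}=(A)_n(A+nI)_{l+m}$) and omits the $G_3$ proof as ``similar,'' and your computation with the exponent $(1-t)^{m-2n}$ and the factorisation of $(A)_{2n-m+k}$ is precisely that adaptation. Your added remarks on absolute convergence and on the negative-index convention $(A)_{-j}=(-1)^j(I-A)_j^{-1}$ are correct bookkeeping that the paper leaves implicit.
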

\begin{theorem}
	Let $A$, $B$, $C$ and $C'$ be matrices in $\mathbb{C}^{r \times r}$. Then the Horn matrix function ${H}_{1}(A, B, C; C'; x, y)$ satisfies the following infinite summation formulae
	\begin{align}
& (1-t)^{-A} H_{1} \left(A, B, C; C'; \frac{x}{1-t}, y(1-t)\right) \nonumber\\
& = \sum_{n = 0}^{\infty} \frac{(A)_n}{n!} H_{1} (A+nI, B, C; C'; {x}, y) t^n, \quad  \vert t\vert < 1;
\\[5pt]
	& (1-t)^{-B} H_{1} \left(A, B, C; C'; \frac{x}{1-t}, \frac{y}{1-t}\right) \nonumber\\
	& = \sum_{n = 0}^{\infty} \frac{(B)_n}{n!} H_{1} (A, B+nI, C; C'; {x}, y) t^n, \quad AB = BA,   \vert t\vert < 1;
	\\[5pt]
	&  H_{1} \left(A, B, C; C'; {x}, \frac{y}{1-t}\right) (1-t)^{-C} \nonumber\\
	& = \sum_{n = 0}^{\infty}  H_{1} (A, B, C+nI; C'; {x}, y) \, \frac{(C)_n}{n!} \,t^n, \quad CC' = C'C,   \vert t\vert < 1.
\end{align}
\end{theorem}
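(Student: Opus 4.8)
The plan is to run the argument already given for the $G_1$ summation formulae, starting from the series definition \eqref{3.4} of $H_1$ and using two standard facts: the matrix binomial expansion $(1-t)^{-M}=\sum_{k\ge 0}\frac{(M)_k}{k!}\,t^k$, which converges in norm for $|t|<1$, and the Pochhammer shift identity $(M+jI)_k\,(M)_j=(M)_k\,(M+kI)_j$, both sides being equal to $(M)_{j+k}$ by \eqref{c1eq.010}. In each of the three formulae one substitutes the indicated rescaling of $x$ and $y$ into \eqref{3.4}, so that the monomial $\frac{x^m y^n}{m!\,n!}$ acquires a scalar power of $1-t$; this scalar power is absorbed into the external factor $(1-t)^{-M}$ to form $(1-t)^{-(M+jI)}$, where $j$ is the relevant linear combination of $m$ and $n$, standing next to the matching Pochhammer symbol $(M)_j$. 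One then expands that power by the binomial series, applies the shift identity to replace $(M+jI)_k\,(M)_j$ by $(M)_k\,(M+kI)_j$, interchanges the order of the resulting triple sum over $m,n,k$, and reads off the inner double sum over $m,n$ as a copy of $H_1$ with one parameter shifted by $kI$; relabelling $k$ as $n$ produces the stated identity.

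Concretely, for the first identity the substitution $x\mapsto x/(1-t)$, $y\mapsto y(1-t)$ produces the scalar factor $(1-t)^{-(m-n)}$, which combines with the leftmost factor $(1-t)^{-A}$ and the leading Pochhammer symbol $(A)_{m-n}$; since $(1-t)^{-A}$ already sits immediately to the left of $(A)_{m-n}$, no commutativity hypothesis is needed, which is why the first formula carries only the restriction $|t|<1$. For the second identity the substitution $x\mapsto x/(1-t)$, $y\mapsto y/(1-t)$ yields $(1-t)^{-(m+n)}$, and to fuse this with the external $(1-t)^{-B}$ into a power acting on $(B)_{m+n}$ one must slide $(1-t)^{-B}$ rightward past $(A)_{m-n}$, and afterwards slide the extracted $(B)_k$ leftward past $(A)_{m-n}$ again; both moves are exactly what the hypothesis $AB=BA$ licenses. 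For the third identity the substitution $y\mapsto y/(1-t)$ yields $(1-t)^{-n}$, which must be merged with the external $(1-t)^{-C}$ (written on the right) into a power acting on $(C)_n$; this forces $(1-t)^{-C}$, and subsequently the extracted $(C)_k$, to be commuted leftward, respectively rightward, past $(C')_m^{-1}$, which is precisely what $CC'=C'C$ supplies.

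The only steps needing genuine, if routine, care are: (i) checking that the stated commutativity hypotheses are exactly what is required to reorder the non-commuting matrix factors into the arrangement in which a shifted copy of $H_1$ appears — here one uses that any two Pochhammer symbols built from the same matrix commute, so the only cross-commutations ever invoked are $AB=BA$ and $CC'=C'C$; and (ii) justifying the interchange of summation order in the triple series, which follows from the norm-majorant device already used for the regions of convergence: bound each matrix coefficient by its norm and observe that, for $|t|<1$ and $(x,y)$ in the domain of absolute convergence of the rescaled $H_1$ series, the corresponding scalar triple series over $m,n,k$ converges, so Fubini applies. I expect step (i), the bookkeeping of the matrix orderings, to be the only real obstacle; everything else is a transcription of the $G_1$ proof.
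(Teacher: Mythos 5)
Your proposal is correct and follows essentially the same route as the paper: the paper proves only the $G_1$ summation formula in detail and leaves the $H_1$ case as "similar," and your argument is a faithful transcription of that template, with an accurate accounting of exactly which cross-commutations ($AB=BA$ for the second formula, $CC'=C'C$ for the third, none for the first) are needed to slide $(1-t)^{-M}$ and the extracted $(M)_k$ past the intervening Pochhammer factors. The only point worth noting explicitly is that in the first formula the exponent $j=m-n$ may be negative, so the shift identity $(A+jI)_k(A)_j=(A)_k(A+kI)_j$ must be read with the convention $(A)_{-n}=(-1)^n(I-A)_n^{-1}$ already used in the paper; it holds in that generality, so no gap results.
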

\begin{theorem}
	Let $A$, $B$, $C$, $C'$ and $C''$ be matrices in $\mathbb{C}^{r \times r}$. Then the Horn matrix function ${H}_{2}(A, B, C, C'; C''; x, y)$ satisfies the following infinite summation formulae
	\begin{align}
			& (1-t)^{-A} H_{2} \left(A, B, C, C'; C''; \frac{x}{1-t}, {y}{(1-t)}\right)  \nonumber\\
		& = \sum_{n = 0}^{\infty} \frac{(A)_n}{n!} H_{2} (A+nI, B, C, C'; C''; {x}, y) \, t^n, \quad  \vert t\vert < 1;
		\\[5pt]
		& (1-t)^{-B} H_{2} \left(A, B, C, C'; C''; \frac{x}{1-t}, {y}\right)  \nonumber\\
	& = \sum_{n = 0}^{\infty} \frac{(B)_n}{n!} H_{2} (A, B+nI, C, C'; C''; {x}, y) \, t^n, \quad AB = BA, \vert t\vert < 1;
	\\[5pt]
		&  H_{2} \left(A, B, C, C'; C''; {x}, \frac{y}{1-t}\right) (1-t)^{-C} \nonumber\\
		& = \sum_{n = 0}^{\infty}  H_{2} (A, B, C+nI, C'; C''; {x}, y) \, \frac{(C)_n}{n!} \, t^n, \  CC' = C'C, CC'' = C''C,  \vert t\vert < 1;
		\\[5pt]
		&  H_{2} \left(A, B, C, C'; C''; {x}, \frac{y}{1-t}\right) \, (1-t)^{-C'} \nonumber\\
		& = \sum_{n = 0}^{\infty} H_{2} (A, B, C, C'+nI; C''; {x}, y) \,  \frac{(B)_n}{n!} \, t^n, \quad CC''=C''C,   \vert t\vert < 1.
	\end{align}
\end{theorem}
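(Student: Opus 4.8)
The plan is to carry over verbatim the template already used for $G_1$ in the excerpt (and, implicitly, for $H_1$): for each of the four asserted identities, start from the series definition \eqref{3.5}, perform the indicated rescaling of $x$ and $y$, collect the resulting scalar power of $(1-t)$, absorb it into the prefactor $(1-t)^{-M}$ (with $M$ one of $A$, $B$, $C$, $C'$) so as to form a single factor $(1-t)^{-(M+\text{shift}\cdot I)}$, expand that factor by the matrix binomial series $(1-t)^{-(M+\ell I)}=\sum_{k\ge 0}\frac{(M+\ell I)_k}{k!}t^k$, valid for $|t|<1$, apply a Pochhammer shift identity, and re-sum the triple series.

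Concretely, for the first identity I would substitute $x\mapsto x/(1-t)$ and $y\mapsto y(1-t)$; since $A$ carries the index $m-n$, this contributes the factor $(1-t)^{n-m}=(1-t)^{-(m-n)}$, which combines with $(1-t)^{-A}$ into $(1-t)^{-(A+(m-n)I)}$. Expanding this and using $(A+(m-n)I)_k(A)_{m-n}=(A)_k(A+kI)_{m-n}$ leaves $(A)_k$ to the left of all remaining Pochhammer symbols, so re-summing over $m,n$ yields $\sum_k\frac{(A)_k}{k!}H_2(A+kI,\dots)t^k$ with no commutativity hypothesis required — consistent with the stated conditions. For the second identity I would rescale only $x\mapsto x/(1-t)$, producing $(1-t)^{-(B+mI)}$ after absorbing $(1-t)^{-B}$; now the combined factor sits to the \emph{left} of $(A)_{m-n}$ and must be commuted past it before it can be absorbed into $(B)_m$ via $(B+mI)_k(B)_m=(B)_k(B+kI)_m$, and this is exactly where $AB=BA$ is used. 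The third and fourth identities are analogous with $y\mapsto y/(1-t)$ and the factor $(1-t)^{-C}$, resp. $(1-t)^{-C'}$, multiplying from the right; the resulting factor $(1-t)^{-(C+nI)}$ (resp. $(1-t)^{-(C'+nI)}$) must be carried leftward past $(C'')_m^{-1}$ (and, in the third case, also past $(C')_n$) before being combined with $(C)_n$ (resp. $(C')_n$) through $(C)_n(C+nI)_k=(C)_k(C+kI)_n$, and the commutativity hypotheses imposed on $C,C',C''$ are precisely what licence those rearrangements.

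Two points will require care rather than ingenuity. First, every step interchanges the order of a triple summation (over $m$, $n$, and the new index $k$); I would justify this once at the outset by observing that in the region where $H_2$ converges the double series is absolutely convergent, and for $|t|<1$ the extra factor $\sum_k\frac{|\,(M+\ell I)_k\,|}{k!}|t|^k$ only multiplies the bound by a convergent series, so Fubini applies. Second — and this is the only genuine obstacle — one must keep strict account of which side each matrix prefactor is multiplied on and verify that the commutativity relations in the hypothesis are exactly sufficient to slide the extracted symbol $(M)_k$ into its final position; the computation is routine but unforgiving, since $(A)_{m-n}$, $(C')_n$ and $(C'')_m^{-1}$ do not commute with one another in general, so each of the four cases needs its own short check of the placement conditions.
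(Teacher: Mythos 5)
Your proposal is correct and follows essentially the same route as the paper, which proves only the $G_1$ case by exactly this expansion--recombination of $(1-t)^{-(M+\ell I)}$ via $(M+\ell I)_k(M)_\ell=(M)_k(M+kI)_\ell$ and declares the remaining cases similar; your added care about Fubini and about the left/right placement of the extracted Pochhammer symbols is a welcome tightening rather than a deviation. One small observation your placement analysis surfaces: in the fourth identity the commutation actually required is $C'C''=C''C'$ (and the coefficient should read $(C')_n/n!$ rather than $(B)_n/n!$), so the condition printed in the statement appears to be a typo rather than an obstacle to your argument.
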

\begin{theorem}
	Let $A$, $B$ and $C$ be matrices in $\mathbb{C}^{r \times r}$. Then the Horn matrix function ${H}_{3}(A, B; C; x, y)$ satisfies the following infinite summation formulae
	\begin{align}
		& (1-t)^{-A} H_{3} \left(A, B; C; \frac{x}{(1-t)^2}, \frac{y}{1-t}\right) \nonumber\\
		& = \sum_{n = 0}^{\infty} \frac{(A)_n}{n!} H_{3} (A+nI, B; C; {x}, y) t^n, \quad  \vert t\vert < 1;
		\\[5pt]
		& (1-t)^{-B} H_{3} \left(A, B; C; {x}, \frac{y}{1-t}\right) \nonumber\\
		& = \sum_{n = 0}^{\infty} \frac{(B)_n}{n!} H_{3} (A, B+nI; C; {x}, y) t^n, \quad AB = BA,   \vert t\vert < 1.
	\end{align}
\end{theorem}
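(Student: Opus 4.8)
The plan is to adapt the technique already used for $G_1$. First I would write out the left-hand side of each identity from the defining series \eqref{3.6} of $H_3$, then replace the scalar powers of $(1-t)$ created by the rescaled arguments by binomial series, absorbing them into matrix Pochhammer symbols through the elementary expansion
\begin{align}
(1-t)^{-(M+pI)} = \sum_{k\ge 0} \frac{(M+pI)_k}{k!}\, t^k, \qquad |t|<1, \label{skbin}
\end{align}
combined with the contiguity relation
\begin{align}
(M+pI)_k\,(M)_p = (M)_{p+k} = (M)_k\,(M+kI)_p, \label{skcont}
\end{align}
and finally re-summing the resulting triple series so as to recognise it as the stated single series of shifted $H_3$'s. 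Since $H_3$ converges absolutely on its domain of convergence (which is the same as in the scalar case, as remarked after the convergence theorem for $G_1$) and $|t|<1$, the order of summation may be interchanged freely at every stage.

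For the first identity I would exploit that $(1-t)^{-A}$, being a holomorphic function of $A$, commutes with the polynomial $(A)_{2m+n}$; hence, after collecting the scalar factor $(1-t)^{-(2m+n)}$ coming from the arguments $x/(1-t)^2$ and $y/(1-t)$, it merges with $(A)_{2m+n}$ into $(1-t)^{-(A+(2m+n)I)}(A)_{2m+n}$. Applying \eqref{skbin} and \eqref{skcont} with $M=A$, $p=2m+n$ and new summation index $k$, and then pulling the factor $(A)_k/k!$ (a polynomial in $A$, hence central with respect to $(A+kI)_{2m+n}$) to the front, leaves exactly $\sum_{k\ge 0}\frac{(A)_k}{k!}\,H_3(A+kI,B;C;x,y)\,t^k$. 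No commutativity hypothesis is required here, in agreement with the statement.

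For the second identity the extra scalar factor is $(1-t)^{-n}$, while the matrix prefactor $(1-t)^{-B}$ now stands to the left of $(A)_{2m+n}$. Here I would invoke the hypothesis $AB=BA$ to commute $(1-t)^{-B}$ past $(A)_{2m+n}$, after which it combines with $(B)_n$ and the scalar $(1-t)^{-n}$ into $(1-t)^{-(B+nI)}(B)_n$. Expanding by \eqref{skbin} and applying \eqref{skcont} with $M=B$, $p=n$ and new index $k$, I obtain a triple series whose summand is $(A)_{2m+n}\,\frac{(B)_k}{k!}\,(B+kI)_n\,(C)_{m+n}^{-1}\,\frac{x^m y^n}{m!\,n!}\,t^k$; a second use of $AB=BA$ moves $(B)_k/k!$ back in front of $(A)_{2m+n}$, and performing the summation over $m$ and $n$ identifies the bracketed series as $H_3(A,B+kI;C;x,y)$.

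The main obstacle is simply the bookkeeping of the order of the matrix factors: $(A)_{2m+n}$ and $(C)_{m+n}^{-1}$ need not commute, so the shift factor built from $A$ (respectively $B$) must be carried on the correct side throughout, and the hypothesis $AB=BA$ in the second formula is precisely what makes the two transpositions used above legitimate. Everything else --- the binomial expansions \eqref{skbin}, the contiguity identities \eqref{skcont}, and the term-by-term interchange of summations --- is entirely routine, exactly as in the proof for $G_1$.
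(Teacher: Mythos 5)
Your proposal is correct and follows exactly the method the paper uses for the one case it proves in detail ($G_1$): absorb the scalar factor $(1-t)^{-(2m+n)}$ (resp.\ $(1-t)^{-n}$) into the matrix exponent, expand $(1-t)^{-(M+pI)}$ binomially, apply $(M+pI)_k(M)_p=(M)_k(M+kI)_p$, and re-sum, with $AB=BA$ invoked precisely where $(1-t)^{-B}$ and $(B)_k$ must pass $(A)_{2m+n}$. The paper omits the proof for $H_3$ as ``similar,'' and your argument is the intended one.
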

\begin{theorem}
	Let $A$, $B$, $C$ and $C'$ be matrices in $\mathbb{C}^{r \times r}$. Then the Horn matrix function ${H}_{4}(A, B; C, C'; x, y)$ satisfies the following infinite summation formulae
	\begin{align}
		& (1-t)^{-A} H_{4} \left(A, B; C, C'; \frac{x}{(1-t)^2}, \frac{y}{1-t}\right) \nonumber\\
		& = \sum_{n = 0}^{\infty} \frac{(A)_n}{n!} H_{4} (A+nI, B; C, C'; {x}, y) t^n, \quad  \vert t\vert < 1;
		\\[5pt]
		& (1-t)^{-B} H_{4} \left(A, B; C, C'; {x}, \frac{y}{1-t}\right) \nonumber\\
		& = \sum_{n = 0}^{\infty} \frac{(B)_n}{n!} H_{4} (A, B+nI; C, C'; {x}, y) t^n, \quad AB = BA,   \vert t\vert < 1.
	\end{align}
\end{theorem}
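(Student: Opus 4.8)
The plan is to follow the same scheme as in the proof of the infinite summation formulae for $G_1$, working directly from the defining series $H_4(A,B;C,C';x,y)=\sum_{m,n\ge0}(A)_{2m+n}(B)_n(C)_m^{-1}(C')_n^{-1}\frac{x^m y^n}{m!\,n!}$ recorded in \eqref{3.7}. For the first identity I would substitute $x\mapsto x/(1-t)^2$ and $y\mapsto y/(1-t)$, which replaces $x^m y^n$ by $x^m y^n (1-t)^{-(2m+n)}$, and then pull the external factor $(1-t)^{-A}$ inside the double series; since $(1-t)^{-(2m+n)}$ is a scalar it merges with $(1-t)^{-A}$ to produce $(1-t)^{-(A+(2m+n)I)}$ standing immediately to the left of $(A)_{2m+n}$. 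Expanding $(1-t)^{-(A+(2m+n)I)}=\sum_{k\ge0}\frac{(A+(2m+n)I)_k}{k!}t^k$ (valid for $|t|<1$) and applying the Pochhammer identity $(A+(2m+n)I)_k(A)_{2m+n}=(A)_k(A+kI)_{2m+n}$ — the same identity used in the $G_1$ argument with $l+m$ replaced by $2m+n$ — one interchanges the $k$-summation with the $(m,n)$-summation and recognizes the inner sum as $H_4(A+kI,B;C,C';x,y)$ with scalar prefactor $\frac{(A)_k}{k!}t^k$. This is the first claimed formula; no commutativity hypothesis enters because the only matrix manipulation is an $A$-only identity and $(1-t)^{-A}$ is applied on the left.

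For the second identity I would substitute $y\mapsto y/(1-t)$ and multiply on the left by $(1-t)^{-B}$. The scalar $(1-t)^{-n}$ produced by $y^n\mapsto y^n(1-t)^{-n}$ combines with $(1-t)^{-B}$ into $(1-t)^{-(B+nI)}$, which must be carried to the right past the factor $(A)_{2m+n}$ so that it sits next to $(B)_n$; this is precisely where the hypothesis $AB=BA$ is used, via the rule $f(A)g(B)=g(B)f(A)$ from the matrix functional calculus recalled in Section~2. Then $(1-t)^{-(B+nI)}(B)_n=\sum_{k\ge0}\frac{(B)_k}{k!}t^k(B+kI)_n$ by the same Pochhammer identity, and a second use of $AB=BA$ moves $(B)_k$ to the far left; after interchanging summations the inner sum becomes $H_4(A,B+kI;C,C';x,y)$, which yields the stated expansion $\sum_{k\ge0}\frac{(B)_k}{k!}H_4(A,B+kI;C,C';x,y)t^k$. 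It is worth observing that $(1-t)^{-(B+nI)}$ never has to commute with $(C)_m^{-1}$ or $(C')_n^{-1}$, since it is inserted on the left of the series rather than adjacent to those factors; this is why $AB=BA$ alone suffices and no commutation of $B$ with $C$ or $C'$ is required.

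The computations involved — tracking the powers of $(1-t)$, expanding the binomial series, and reindexing — are routine. The only delicate point is the legitimacy of the term-by-term operations, namely the rearrangement of the resulting triple series and the interchange of the binomial expansion with the defining double sum; this is justified by the absolute convergence of $H_4$ on its region of convergence together with $|t|<1$, exactly as in the proof of the corresponding theorem for $G_1$. I expect this convergence bookkeeping to be the main (and essentially only) obstacle, the remainder being a direct transcription of the $G_1$ proof with $(A)_{m+n}$ replaced by $(A)_{2m+n}$ and the extra scalar powers of $(1-t)$ carried along.
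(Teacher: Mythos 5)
Your proposal is correct and follows exactly the route the paper intends: the paper proves only the $G_1$ case in detail and declares the remaining summation formulae analogous, and your argument is precisely that $G_1$ argument transcribed to $H_4$ — binomial expansion of $(1-t)^{-(A+(2m+n)I)}$, the identity $(A+(2m+n)I)_k(A)_{2m+n}=(A)_k(A+kI)_{2m+n}$, and interchange of summations, with $AB=BA$ invoked only where the $B$-factors must be carried past $(A)_{2m+n}$. Your observation that no commutation with $C$ or $C'$ is needed is a correct and welcome refinement of the bookkeeping.
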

\begin{theorem}
	Let $A$, $B$ and $C$ be matrices in $\mathbb{C}^{r \times r}$. Then the Horn matrix function ${H}_{5}(A, B; C; x, y)$ satisfies the following infinite summation formulae
	\begin{align}
		& (1-t)^{-A} H_{5} \left(A, B; C; \frac{x}{(1-t)^2}, \frac{y}{1-t}\right) \nonumber\\
		& = \sum_{n = 0}^{\infty} \frac{(A)_n}{n!} H_{5} (A+nI, B; C; {x}, y) t^n, \quad  \vert t\vert < 1;
		\\[5pt]
		& (1-t)^{-B} H_{5} \left(A, B; C; {x}(1-t), \frac{y}{1-t}\right) \nonumber\\
		& = \sum_{n = 0}^{\infty} \frac{(B)_n}{n!} H_{5} (A, B+nI; C; {x}, y) t^n, \quad AB = BA,   \vert t\vert < 1.
	\end{align}
\end{theorem}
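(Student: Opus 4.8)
The plan is to argue directly from the series definition \eqref{3.8}, exactly as in the proof of the summation formulae for $G_1$. First I would substitute $x \mapsto x/(1-t)^2$ and $y \mapsto y/(1-t)$ into the series for $H_5$ and factor out the overall $(1-t)^{-A}$. Since the monomial $x^m y^n$ then acquires the factor $(1-t)^{-(2m+n)}$, the product $(1-t)^{-A}(1-t)^{-(2m+n)I}$ collapses to $(1-t)^{-(A+(2m+n)I)}$, which sits precisely in front of the factor $(A)_{2m+n}$ in each term. Expanding this by the binomial matrix series $(1-t)^{-(A+(2m+n)I)} = \sum_{k\ge 0}\frac{(A+(2m+n)I)_k}{k!}t^k$ (convergent for $|t|<1$) and using the Pochhammer identity $(A+(2m+n)I)_k (A)_{2m+n} = (A)_k (A+kI)_{2m+n}$, the factor $(A)_k/k!$ comes cleanly to the left of the $(m,n)$-sum, and the remaining inner double sum is exactly $H_5(A+kI, B; C; x, y)$. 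Interchanging the order of the resulting triple summation — legitimate on the common region of absolute convergence coming from the region-of-convergence result together with $|t|<1$ — yields the first identity.

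For the second identity I would instead set $x \mapsto x(1-t)$ and $y \mapsto y/(1-t)$ and factor out $(1-t)^{-B}$. Now $x^m y^n$ picks up $(1-t)^{m-n}$, so the generic term carries $(1-t)^{-B}(1-t)^{-(n-m)I} = (1-t)^{-(B+(n-m)I)}$, which stands adjacent to the factor $(B)_{n-m}$. Expanding by the same binomial series and applying $(B+(n-m)I)_k (B)_{n-m} = (B)_k (B+kI)_{n-m}$, the term becomes $(A)_{2m+n} (B)_k (B+kI)_{n-m} (C)_n^{-1} \frac{x^m y^n}{m!\,n!}\,t^k$. To recognise the inner double sum as $H_5(A, B+kI; C; x, y)$ and bring $(B)_k/k!$ to the left (as it is written on the right-hand side of the stated formula), one must commute $(B)_k$ past $(A)_{2m+n}$; this is precisely where the hypothesis $AB = BA$ is used, since then $(A)_{2m+n}$ commutes with every power of $B$. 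After this commutation and the usual interchange of summations, the second identity follows.

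The routine ingredients are the two Pochhammer shift identities and the binomial expansion of $(1-t)^{-M}$; the only points genuinely requiring attention are (i) the bookkeeping showing the $(1-t)$-powers recombine into a single Pochhammer index shift sitting next to the correct matrix factor, and (ii) the appeal to commutativity $AB = BA$ in the second formula to move $(B)_k$ to the front. Absolute convergence on the relevant polydisc for $|t|<1$ justifies all rearrangements, so no further analytic input is needed; I would remark that the proof is parallel to that of the $G_1$ summation formula and omit the repetitive computations.
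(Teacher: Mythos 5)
Your argument is correct and is essentially the paper's own method: the paper proves only the $G_1$ summation formulae in detail (via the expansion $(1-t)^{-(M+\ell I)}=\sum_{k}\frac{(M+\ell I)_k}{k!}t^k$ and the shift identity $(M+\ell I)_k(M)_\ell=(M)_k(M+kI)_\ell$) and omits the $H_5$ case as similar, which is exactly the computation you carry out. Your identification of the commutation of $(B)_k$ past $(A)_{2m+n}$ as the sole place where $AB=BA$ is needed is consistent with the hypothesis stated in the theorem.
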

\begin{theorem}
	Let $A$, $B$ and $C$ be matrices in $\mathbb{C}^{r \times r}$. Then the Horn matrix function ${H}_{6}(A, B; C; x, y)$ satisfies the following infinite summation formulae
	\begin{align}
		& (1-t)^{-A} H_{6} \left(A, B; C; \frac{x}{(1-t)^2}, {y}{(1-t)}\right) \nonumber\\
		& = \sum_{n = 0}^{\infty} \frac{(A)_n}{n!} H_{6} (A+nI, B; C; {x}, y) t^n, \quad  \vert t\vert < 1;
		\\[5pt]
		& (1-t)^{-B} H_{6} \left(A, B; C; {x}(1-t), \frac{y}{1-t}\right) \nonumber\\
		& = \sum_{n = 0}^{\infty} \frac{(B)_n}{n!} H_{3} (A, B+nI; C; {x}, y) t^n, \quad AB = BA,   \vert t\vert < 1.
	\end{align}
\end{theorem}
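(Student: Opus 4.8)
The plan is to adapt the argument already used for the summation formulae of $G_1$. Starting from the definition \eqref{3.9} of $H_6$ evaluated at the rescaled arguments, I would first collect all powers of $(1-t)$ produced by the substitution into a single exponent attached to the general term $(A)_{2m-n}(B)_{n-m}(C)_n\,x^m y^n/(m!\,n!)$. For the first identity the replacement $x\mapsto x/(1-t)^2$, $y\mapsto y(1-t)$ contributes a factor $(1-t)^{-(2m-n)}$, which together with the external $(1-t)^{-A}$ gives $(1-t)^{-(A+(2m-n)I)}$ multiplying that term.

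Next I would expand $(1-t)^{-(A+(2m-n)I)}=\sum_{k\ge 0}(A+(2m-n)I)_k\,t^k/k!$, valid for $|t|<1$, and apply the Pochhammer identity $(A+(2m-n)I)_k\,(A)_{2m-n}=(A)_k\,(A+kI)_{2m-n}$ (both sides equal $(A)_{2m-n+k}$, so no commutativity is needed here). This turns the double sum into the triple sum $\sum_{m,n,k\ge0}\frac{(A)_k}{k!}(A+kI)_{2m-n}(B)_{n-m}(C)_n\,\frac{x^m y^n}{m!\,n!}\,t^k$; performing the $m,n$ summation first recovers $H_6(A+kI,B;C;x,y)$ by \eqref{3.9}, which yields the stated right-hand side. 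The rearrangement of the order of summation is justified on the set where the three series converge absolutely, i.e. for $|t|<1$ and $(x,y)$ in the region of convergence of $H_6$.

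For the second identity the scheme is identical, except that the substitution $x\mapsto x(1-t)$, $y\mapsto y/(1-t)$ now contributes $(1-t)^{m-n}$, so the relevant factor is $(1-t)^{-(B+(n-m)I)}$ once combined with the external $(1-t)^{-B}$. To move this factor onto the $(B)_{n-m}$ block one must commute $(1-t)^{-B}$ past $(A)_{2m-n}$, and, after expanding and using $(B+(n-m)I)_k\,(B)_{n-m}=(B)_k\,(B+kI)_{n-m}$, one must also commute the leftover $(B)_k$ past $(A)_{2m-n}$; both reorderings are exactly what the hypothesis $AB=BA$ supplies. Summation over $m,n$ then produces $H_6(A,B+kI;C;x,y)$, giving the second formula.

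The main point requiring care is the bookkeeping of the exponent of $(1-t)$, which is the non-symmetric integer $2m-n$ (respectively $m-n$) and takes both signs; consequently one must restrict $(x,y)$ to the region of absolute convergence of $H_6$ and impose $|t|<1$ so that Fubini's theorem for series validates the interchange of the triple summation. Once that is in place, the proof reduces to the two elementary Pochhammer identities above together with the noncommutative reorderings permitted by $AB=BA$.
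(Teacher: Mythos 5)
Your proposal is correct and follows exactly the method the paper uses for its model case ($G_1$): substitute the rescaled arguments, absorb the resulting power of $(1-t)$ into $(1-t)^{-(A+(2m-n)I)}$ (resp.\ $(1-t)^{-(B+(n-m)I)}$), expand binomially, apply $(A+(2m-n)I)_k\,(A)_{2m-n}=(A)_k\,(A+kI)_{2m-n}$, and interchange the summations; the paper omits the proof for $H_6$ precisely because it is this routine adaptation, and you correctly identify where the hypothesis $AB=BA$ enters in the second formula. (Incidentally, the $H_3$ appearing on the right-hand side of the second formula in the statement is a typo for $H_6$, as your derivation confirms.)
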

\begin{theorem}
	Let $A$, $B$, $C$ and $C'$ be matrices in $\mathbb{C}^{r \times r}$. Then the Horn matrix function ${H}_{7}(A, B; C, C'; x, y)$ satisfies the following infinite summation formulae
	\begin{align}
		& (1-t)^{-A} H_{7} \left(A, B; C, C'; \frac{x}{(1-t)^2}, {y}{(1-t)}\right) \nonumber\\
		& = \sum_{n = 0}^{\infty} \frac{(A)_n}{n!} H_{7} (A+nI, B; C, C'; {x}, y) t^n, \quad  \vert t\vert < 1;
		\\[5pt]
		& (1-t)^{-B} H_{7} \left(A, B; C, C'; {x}, \frac{y}{1-t}\right) \nonumber\\
		& = \sum_{n = 0}^{\infty} \frac{(B)_n}{n!} H_{7} (A, B+nI; C, C'; {x}, y) t^n, \quad AB = BA,   \vert t\vert < 1
		\\[5pt]
		&  H_{7} \left(A, B; C, C'; {x}, \frac{y}{1-t}\right) \, (1-t)^{-C} \nonumber\\
		& = \sum_{n = 0}^{\infty}  H_{7} (A, B+nI; C, C'; {x}, y) \, \frac{(C)_n}{n!} t^n, \quad   \vert t\vert < 1.
	\end{align}
\end{theorem}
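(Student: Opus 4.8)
The plan is to imitate the proof already given for the summation formulae of $G_1$, adjusting only the exponent bookkeeping to the index $2m-n$ that appears in $H_7$ (see \eqref{3.10}). For the first identity I would start from
\[
(1-t)^{-A}\,H_{7}\!\left(A,B;C,C';\tfrac{x}{(1-t)^{2}},\,y(1-t)\right)
=\sum_{m,n\ge 0}(1-t)^{-A}\,(1-t)^{\,n-2m}\,(A)_{2m-n}(B)_{n}(C)_{n}(C')_{m}^{-1}\,\frac{x^{m}y^{n}}{m!\,n!},
\]
the scalar factor $(1-t)^{\,n-2m}$ arising from substituting $x/(1-t)^{2}$ and $y(1-t)$ into $x^{m}y^{n}$. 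Because $A$ commutes with $(2m-n)I$, the two powers of $1-t$ coalesce into $(1-t)^{-(A+(2m-n)I)}$, which I would expand by the same matrix binomial series $(1-t)^{-M}=\sum_{k\ge 0}\frac{(M)_{k}}{k!}t^{k}$ ($|t|<1$) used in the $G_1$ proof.

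The pivotal algebraic step is the Pochhammer splitting
\[
(A+(2m-n)I)_{k}\,(A)_{2m-n}=(A)_{k}\,(A+kI)_{2m-n},
\]
both sides of which equal $\Gamma^{-1}(A)\,\Gamma\!\big(A+(2m-n+k)I\big)$ by \eqref{c1eq.010} (with $(A)_{2m-n}$ for a negative index $2m-n$ read through $(A)_{-p}=(-1)^{p}(I-A)_{p}^{-1}$, as in the integral-representation proof). After inserting this, interchanging the triple sum — legitimate since the series converges absolutely for $|t|<1$ and $(x,y)$ in the region of convergence of $H_7$ — and renaming $k$ as $n$, the inner double sum over $m,n$ is exactly $H_{7}(A+nI,B;C,C';x,y)$; this is the first formula.

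The second and third identities go the same way, but with only the variable $y$ rescaled: replacing $y$ by $y/(1-t)$ inserts the single factor $(1-t)^{-n}$, which combines with $(1-t)^{-B}$ sitting on the left (respectively with $(1-t)^{-C}$ placed on the right) to give $(1-t)^{-(B+nI)}$ (respectively $(1-t)^{-(C+nI)}$). Expanding by the matrix binomial series and using $(B+nI)_{k}(B)_{n}=(B)_{k}(B+kI)_{n}$ (respectively the analogous identity for $C$) again reduces everything to $H_7$ with one parameter shifted by $kI$. In the second formula the freshly produced factor $(B)_{k}$ has to be carried all the way to the left past $(A)_{2m-n}$ to match the stated right-hand side, and this is precisely where $AB=BA$ is used; in the third, exactly as in the companion statements for $H_1$ and $H_2$, one needs $CC'=C'C$ to carry $(C)_{k}$ to the far right past $(C')_{m}^{-1}$, and the parameter that is actually shifted is $C$, so the right-hand side should read $H_{7}(A,B;C+nI,C';x,y)$ in place of the printed $H_{7}(A,B+nI;C,C';x,y)$.

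The only genuine difficulties I foresee are two bookkeeping points: (i) keeping the noncommuting blocks $(A)_{2m-n}$, $(B)_{n}$, $(C)_{n}$, $(C')_{m}^{-1}$ in the right left-to-right order throughout, calling on a commutativity hypothesis at exactly the spot where a new Pochhammer factor must cross one of them; and (ii) justifying the rearrangement of summations, which follows from absolute convergence of $\sum_{m,n}\|(A)_{2m-n}(B)_{n}(C)_{n}(C')_{m}^{-1}\|\,|x|^{m}|y|^{n}$ against a geometric series in $t$ for $|t|<1$, together with the fact — recorded just after the convergence theorem for $G_1$ — that the region of convergence of $H_7$ agrees with the scalar case. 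The negative-index Pochhammer manipulations are all special cases of the $\Gamma$-quotient identity \eqref{c1eq.010} and raise nothing beyond the invertibility that is assumed throughout.
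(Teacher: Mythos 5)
Your proposal is correct and follows essentially the same route as the paper, which proves only the $G_1$ summation formula and leaves $H_7$ as ``similar'': substitute the rescaled arguments, combine the scalar power of $(1-t)$ with the matrix power into $(1-t)^{-(M+(\cdot)I)}$, expand by the matrix binomial series, split via $(M+(\cdot)I)_k\,(M)_{(\cdot)} = (M)_k\,(M+kI)_{(\cdot)}$, and invoke commutativity exactly where the new Pochhammer factor must cross a non-commuting block. You are also right that the third printed identity contains a typo --- the shifted parameter should be $C$, i.e.\ the right-hand side should read $H_7(A,B;C+nI,C';x,y)$, and the hypothesis $CC'=C'C$ ought to accompany that formula.
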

\begin{theorem}
	Let $A$, $B$ and $B'$ be matrices in $\mathbb{C}^{r \times r}$. Then the  matrix function ${\Gamma}_{1}(A, B, B'; x, y)$ satisfies the following infinite summation formulae
	\begin{align}
		& (1-t)^{-A} \Gamma_{1} \left(A, B, B'; \frac{x}{1-t}, {y}\right) \nonumber\\
		& = \sum_{n = 0}^{\infty} \frac{(A)_n}{n!} \Gamma_{1} (A+nI, B, B'; {x}, y) t^n, \quad  \vert t\vert < 1;
		\\[5pt]
		&  \Gamma_{1} \left(A, B, B'; {x}{(1-t)}, \frac{y}{1-t}\right) \, (1-t)^{-B} \nonumber\\
		& = \sum_{n = 0}^{\infty}  \Gamma_{1} (A, B+nI, B'; {x}, y) \, \frac{(B)_n}{n!} \,  t^n, \quad BB' = B'B,   \vert t\vert < 1;
		\\[5pt]
		&  \Gamma_{1} \left(A, B, B'; \frac{x}{1-t}, {y}{(1-t)}\right) (1-t)^{-B'} \nonumber\\
		& = \sum_{n = 0}^{\infty}  \Gamma_{1} (A, B, B'+nI; {x}, y) \, \frac{(B')_n}{n!} \,t^n, \quad  \vert t\vert < 1.
	\end{align}
\end{theorem}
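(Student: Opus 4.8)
The plan is to follow the same pattern as the proof of the analogous summation identities for $G_1$: substitute the shifted arguments into the defining series \eqref{5.1}, absorb the scalar powers of $(1-t)$ into the appropriate matrix exponential, expand by the binomial series in $t$ for $|t|<1$, and re-index the resulting triple sum by a Pochhammer shift identity.

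For the first formula I would start from $\Gamma_1(A,B,B';x,y)=\sum_{m,n\ge0}(A)_m(B)_{n-m}(B')_{m-n}\,x^my^n/(m!\,n!)$, replace $x$ by $x/(1-t)$, and note that the factor $(1-t)^{-m}$ thereby produced combines with the left prefactor to give $(1-t)^{-A}(1-t)^{-mI}(A)_m=(1-t)^{-(A+mI)}(A)_m$, since $(1-t)^{-A}$, being a function of $A$, commutes with $(A)_m$. Expanding $(1-t)^{-(A+mI)}=\sum_{k\ge0}\frac{(A+mI)_k}{k!}t^k$ and using the matrix identity $(A+mI)_k(A)_m=(A)_k(A+kI)_m$ (both sides equal $(A)_{m+k}$ by \eqref{c1eq.010}) turns the $t^k$-coefficient into $\frac{(A)_k}{k!}\sum_{m,n}(A+kI)_m(B)_{n-m}(B')_{m-n}x^my^n/(m!\,n!)=\frac{(A)_k}{k!}\Gamma_1(A+kI,B,B';x,y)$, which is the right-hand side after renaming $k$. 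No commutativity hypothesis is needed here because the $(B)$ and $(B')$ factors are untouched.

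The second and third formulas are handled the same way, and the hypothesis $BB'=B'B$ enters only in the second. In the second identity the substitutions $x\mapsto x(1-t)$, $y\mapsto y/(1-t)$ yield the scalar factor $(1-t)^{m-n}$; to reach the Pochhammer symbol in $B$ I must move the right multiplier $(1-t)^{-B}$ leftward past $(B')_{m-n}$, which is exactly where $BB'=B'B$ is used, producing $(B)_{n-m}(1-t)^{-(B+(n-m)I)}(B')_{m-n}$. Expanding this exponential and applying the shift identity $(B)_{n-m}(B+(n-m)I)_k=(B)_k(B+kI)_{n-m}$ — valid for every integer value of $n-m$, negative values included, by combining $(B)_{-j}=(-1)^j(I-B)_j^{-1}$ with \eqref{c1eq.010} — yields $\sum_k\Gamma_1(A,B+kI,B';x,y)\frac{(B)_k}{k!}t^k$ (one final use of $BB'=B'B$ brings $(B)_k$ to the far right). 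For the third identity the factor $(1-t)^{-B'}$ already sits adjacent to $(B')_{m-n}$, both functions of $B'$, so no commutativity assumption is required, and the same expansion and shift, now on $B'$, give the asserted series.

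The one point that genuinely needs care — and the main obstacle — is justifying the term-by-term rearrangement of the triple series $\sum_{m,n,k}$. I would dispose of it exactly as in the region-of-convergence analysis of Section~3: pass to $2$-norms, majorize $\Gamma_1$ and all its shifted companions by the corresponding scalar Horn-type series in $\Vert A\Vert,\Vert B\Vert,\Vert B'\Vert$, observe that for $|t|<1$ and $(x,y)$ in the open region of absolute convergence the majorizing triple sum converges, and invoke Fubini's theorem to legitimize the interchange; the identity then holds throughout the stated range. Everything remaining is the routine bookkeeping of Pochhammer shifts, requiring no commutativity beyond $BB'=B'B$.
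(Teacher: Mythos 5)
Your proposal is correct and follows essentially the same route as the paper, which proves only the $G_1$ case of the infinite summation formulae (binomial expansion of $(1-t)^{-(A+(l+m)I)}$ plus the Pochhammer shift identity $(A+(l+m)I)_n(A)_{l+m}=(A)_n(A+nI)_{l+m}$) and omits the $\Gamma_1$ proof as similar; your tracking of exactly where $BB'=B'B$ is needed matches the hypotheses as stated. The only addition beyond the paper's template is your explicit Fubini/majorization justification of the series rearrangement, which the paper leaves implicit.
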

\begin{theorem}
	Let $B$ and $B'$ be matrices in $\mathbb{C}^{r \times r}$. Then the  matrix function ${\Gamma}_{2}(B, B'; x, y)$ satisfies the following infinite summation formulae
	\begin{align}
		&  (1-t)^{-B} \, \Gamma_{2} \left(B, B'; {x}{(1-t)}, \frac{y}{1-t}\right)\nonumber\\
		&  = \sum_{n = 0}^{\infty}  \frac{(B)_n}{n!} \, \Gamma_{2} (B+nI, B'; {x}, y) \,   t^n, \quad   \vert t\vert < 1;
		\\[5pt]
		&  \Gamma_{2} \left(B, B'; \frac{x}{1-t}, {y}{(1-t)}\right) (1-t)^{-B'} \nonumber\\
		& = \sum_{n = 0}^{\infty}  \Gamma_{2} (B, B'+nI; {x}, y) \, \frac{(B')_n}{n!} \,t^n, \quad  \vert t\vert < 1.
	\end{align}
\end{theorem}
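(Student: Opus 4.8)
The plan is to mimic the proof of the corresponding summation formulae for $G_1$ (and $\Gamma_1$): start from the series, perform the dilation of the arguments, pull the resulting scalar powers of $1-t$ into the matrix factor $(1-t)^{-B}$, and then expand by the matrix binomial series. Writing the outer summation index of the right-hand side as $k$ to avoid clashing with the index $n$ in the definition of $\Gamma_2$, I would first observe that, since $(1-t)^{-B}$ is a function of $B$ and therefore commutes with $(B)_{n-m}$,
\[
(1-t)^{-B}\,\Gamma_{2}\!\left(B,B';x(1-t),\tfrac{y}{1-t}\right)
=\sum_{m,n\ge 0}(B)_{n-m}\,(1-t)^{-(B+(n-m)I)}\,(B')_{m-n}\,\frac{x^m y^n}{m!\,n!},
\]
because the factor $(1-t)^{m-n}$ produced by the substitution combines with $(1-t)^{-B}$ into $(1-t)^{-(B+(n-m)I)}$.

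Next, for $|t|<1$ I would insert the matrix binomial expansion $(1-t)^{-(B+(n-m)I)}=\sum_{k\ge 0}\frac{(B+(n-m)I)_k}{k!}t^k$ — exactly the identity already used in the $G_1$ summation theorem — and then apply the Pochhammer shift identity
\[
(B)_{q}\,(B+qI)_k=(B)_k\,(B+kI)_q\qquad(q\in\mathbb{Z},\ k\ge 0),
\]
with $q=n-m$, to rewrite the coefficient as $(B)_k\,(B+kI)_{n-m}(B')_{m-n}$. Gathering the coefficient of $t^k$ then identifies the inner double sum over $m,n$ with $\Gamma_{2}(B+kI,B';x,y)$, yielding the first formula. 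The second formula is obtained by the same computation after interchanging $m\leftrightarrow n$, $x\leftrightarrow y$ and $B\leftrightarrow B'$, dilating instead by $x\mapsto x/(1-t)$, $y\mapsto y(1-t)$ and multiplying by $(1-t)^{-B'}$ on the right.

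The step I expect to require the most care is the shift identity for the (generally negative) index $q=n-m$: for $q\ge 0$ both sides equal $(B)_{q+k}$ by the definition of the Pochhammer matrix symbol, while for $q=-j<0$ one uses the convention $(A)_{-j}=(-1)^j(I-A)_j^{-1}$ employed elsewhere in the paper and verifies the equality by splitting into the cases $k\le j$ and $k>j$; this is legitimate because every factor involved is a function of the single matrix $B$, so no commutativity between $B$ and $B'$ is needed, in agreement with the hypothesis-free statement. The only other point to justify is the rearrangement of the triple sum over $m,n,k$, which follows from absolute convergence as soon as $(x,y)$ together with the dilated arguments lie in the region of convergence of $\Gamma_2$; this is handled exactly as in the $G_1$ case and I do not anticipate additional difficulty there.
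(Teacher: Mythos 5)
Your proof is correct and takes essentially the same route as the paper's: the paper proves only the first $G_1$ summation formula in detail (dilate the arguments, absorb the scalar power of $1-t$ into the matrix exponential, expand $(1-t)^{-(A+(l+m)I)}$ by the binomial series, and apply the Pochhammer shift identity), and declares the remaining cases, including $\Gamma_2$, to be similar. Your added care with the shift identity for the possibly negative index $q=n-m$ (via $(A)_{-j}=(-1)^j(I-A)_j^{-1}$) and with placing $(1-t)^{-B}$ on the side adjacent to the $B$-factors, so that no $BB'=B'B$ hypothesis is needed, is exactly the right reading of the statement.
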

\begin{theorem}
	Let $A$, $B$ and $C$ be matrices in $\mathbb{C}^{r \times r}$. Then the matrix function $\mathcal{H}_1(A, B; C; x, y)$ satisfies the following infinite summation formulae
	\begin{align}
	&  (1-t)^{-A} \, \mathcal{H}_1 \left(A, B; C; \frac{x}{1-t}, y{(1-t)}\right)\nonumber\\
	&  = \sum_{n = 0}^{\infty}  \frac{(A)_n}{n!} \, \mathcal{H}_1 \left(A+nI, B; C; {x}, y\right) \,   t^n, \quad   \vert t\vert < 1;
	\\[5pt]
	&  (1-t)^{-B} \, \mathcal{H}_1 \left(A, B; C; \frac{x}{1-t}, \frac{y}{1-t}\right)\nonumber\\
&  = \sum_{n = 0}^{\infty}  \frac{(B)_n}{n!} \, \mathcal{H}_1 \left(A, B+nI; C; {x}, y\right) \,   t^n, \quad  AB = BA,  \vert t\vert < 1.
\end{align}
\end{theorem}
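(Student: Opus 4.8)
The plan is to reproduce for $\mathcal{H}_1$ the template already used for the first infinite summation formula of $G_1$ (the proof of \eqref{a5.1}): start from the series definition \eqref{5.3}, write the rescaled arguments as a scalar power of $(1-t)$, absorb that power together with the external factor $(1-t)^{-A}$ (resp.\ $(1-t)^{-B}$) into a single \emph{matrix} power of $(1-t)$, expand it by the binomial series $(1-t)^{-M}=\sum_{n\ge0}\tfrac{(M)_n}{n!}t^n$ valid for $|t|<1$, apply the Pochhammer shift identity $(M+pI)_n(M)_p=(M)_n(M+nI)_p$, and collect the power of $t$.

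For the first formula I would write
\[
\mathcal{H}_1\!\left(A,B;C;\tfrac{x}{1-t},\,y(1-t)\right)=\sum_{m,k\ge0}(A)_{m-k}(B)_{m+k}(C)_m^{-1}\frac{x^m y^k}{m!\,k!}\,(1-t)^{k-m},
\]
and multiply on the left by $(1-t)^{-A}$; since $A$ commutes with $(k-m)I$ this contributes $(1-t)^{-(A+(m-k)I)}=\sum_{n\ge0}\tfrac{(A+(m-k)I)_n}{n!}t^n$ inside the sum. The identity $(A+(m-k)I)_n(A)_{m-k}=(A)_n(A+nI)_{m-k}$ — valid for every integer $m-k$, the negative case being covered by $(A)_{-j}=(-1)^j(I-A)_j^{-1}$ and \eqref{eq.07} — turns the summand into $\tfrac{(A)_n}{n!}\,(A+nI)_{m-k}(B)_{m+k}(C)_m^{-1}\tfrac{x^m y^k}{m!k!}\,t^n$; interchanging the (absolutely convergent) summations and recognizing the inner double sum as $\mathcal{H}_1(A+nI,B;C;x,y)$ gives the claim. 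No commutativity hypothesis is needed here, because $(A)_n$ and $(A+nI)_{m-k}$ are both functions of $A$ and $(A)_n$ is already the leftmost factor. For the second formula I would instead rescale both variables by $(1-t)^{-1}$, so that the $(m,k)$ term of $\mathcal{H}_1(A,B;C;\tfrac{x}{1-t},\tfrac{y}{1-t})$ carries $(1-t)^{-(m+k)}$; multiplying on the left by $(1-t)^{-B}$ produces $(1-t)^{-(B+(m+k)I)}=\sum_{n\ge0}\tfrac{(B+(m+k)I)_n}{n!}t^n$, and the shift identity $(B+(m+k)I)_n(B)_{m+k}=(B)_n(B+nI)_{m+k}$ applies. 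Here the hypothesis $AB=BA$ is used twice: first to slide the function $(1-t)^{-(B+(m+k)I)}$ of $B$ past the factor $(A)_{m-k}$, and then to move $(B)_n$ to the far left past $(A)_{m-k}$, after which the remaining double sum is exactly $\mathcal{H}_1(A,B+nI;C;x,y)$.

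The only delicate point — and the step I expect to require the most care to state properly — is the legitimacy of the term-by-term rearrangement of the resulting triple series. One should note that for $|t|<1$ and for $x,y$ small enough that $x/(1-t),\ y(1-t)$ (resp.\ $x/(1-t),\ y/(1-t)$) lie in the region of absolute convergence of $\mathcal{H}_1$, all the series in sight converge absolutely, so Fubini for series justifies the interchange; the identities then extend to the whole common domain of holomorphy of the two sides by analytic continuation. Everything else is the same Pochhammer bookkeeping already carried out for $G_1$, so I would merely flag the two places where $AB=BA$ enters and omit the repetition.
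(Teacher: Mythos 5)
Your proposal is correct and is essentially the paper's own argument: the paper proves only the first summation formula for $G_1$ (expand $(1-t)^{-(A+(l+m)I)}$ binomially and use $(A+(l+m)I)_n(A)_{l+m}=(A)_n(A+nI)_{l+m}$) and states that the remaining cases, including this one for $\mathcal{H}_1$, follow "similarly"; your adaptation to the indices $m-k$ and $m+k$, together with the two uses of $AB=BA$ in the second formula, is exactly that intended argument. Your explicit treatment of the negative-index Pochhammer case and of the absolute-convergence justification for the rearrangement is more careful than what the paper records.
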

\begin{theorem}
	Let $A$, $B$, $B'$ and $C$ be matrices in $\mathbb{C}^{r \times r}$. Then the matrix function $\mathcal{H}_2(A, B, B'; C; x, y)$ satisfies the following infinite summation formulae
	\begin{align}
		&  (1-t)^{-A} \, \mathcal{H}_2 \left(A, B, B'; C; \frac{x}{1-t}, y{(1-t)}\right)\nonumber\\
		&  = \sum_{n = 0}^{\infty}  \frac{(A)_n}{n!} \, \mathcal{H}_2 \left(A+nI, B, B'; C; {x}, y\right) \,   t^n, \quad   \vert t\vert < 1;
		\\[5pt]
		&  (1-t)^{-B} \, \mathcal{H}_2 \left(A, B, B'; C; \frac{x}{1-t}, {y}\right)\nonumber\\
		&  = \sum_{n = 0}^{\infty}  \frac{(B)_n}{n!} \, \mathcal{H}_2 \left(A, B+nI, B'; C; {x}, y\right) \,   t^n, \quad  AB = BA,  \vert t\vert < 1
			\\[5pt]
		&  \mathcal{H}_2 \left(A, B, B'; C; {x}, \frac{y}{1-t}\right) \, (1-t)^{-B'}\nonumber\\
		&  = \sum_{n = 0}^{\infty}   \mathcal{H}_2 \left(A, B, B'+nI; C; {x}, y\right) \, \frac{(B')_n}{n!} \,  t^n, \quad  B'C = CB',  \vert t\vert < 1.
	\end{align}
\end{theorem}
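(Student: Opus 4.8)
The plan is to mimic the proof of the infinite summation formulae for $G_1$: substitute the series definition \eqref{5.4}, absorb the factor $(1-t)$ raised to a matrix power into the general term, expand it by the matrix binomial series, and then recognise the resulting triple sum as the claimed right-hand side. Throughout, the two tools are the expansion $(1-t)^{-(D+kI)}=\sum_{j\ge 0}\frac{(D+kI)_j}{j!}t^j$ (valid for $\vert t\vert<1$) and the Pochhammer shift identity $(D+kI)_j\,(D)_k=(D)_j\,(D+jI)_k$, both already used in the $G_1$ argument; a fresh index $j$ will play the role of the outer summation variable, and only at the very end is it relabelled to $n$ to match the statement.

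For the first identity I would start from
\begin{align*}
(1-t)^{-A}\,\mathcal{H}_2\!\left(A,B,B';C;\tfrac{x}{1-t},y(1-t)\right)
= \sum_{m,n\ge 0}(1-t)^{-(A+(m-n)I)}(A)_{m-n}(B)_m(B')_n(C)_m^{-1}\frac{x^m y^n}{m!\,n!},
\end{align*}
noting that the substitution $x\mapsto x/(1-t)$, $y\mapsto y(1-t)$ produces the scalar factor $(1-t)^{\,n-m}$, which merges with $(1-t)^{-A}$ (a function of $A$, hence central) into $(1-t)^{-(A+(m-n)I)}$. Expanding with $k=m-n$ and applying the shift identity turns the general term into $\frac{(A)_j}{j!}(A+jI)_{m-n}(B)_m(B')_n(C)_m^{-1}\frac{x^m y^n}{m!\,n!}\,t^j$; summing over $m,n$ reconstitutes $\mathcal{H}_2(A+jI,B,B';C;x,y)$ and summing over $j$ gives the formula. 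For the second identity, the substitution $x\mapsto x/(1-t)$ (with $y$ fixed) produces $(1-t)^{-m}$, so the term carries $(1-t)^{-B}(A)_{m-n}(B)_m(1-t)^{-m}$; here the new wrinkle is that $(1-t)^{-B}$ must be commuted past $(A)_{m-n}$ before it can be merged with $(B)_m(1-t)^{-m}$, which is exactly where the hypothesis $AB=BA$ enters. After merging one has $(1-t)^{-(B+mI)}(B)_m=\sum_{j\ge 0}\frac{(B)_j}{j!}(B+jI)_m\,t^j$ and proceeds as before. The third identity is the mirror image: $y\mapsto y/(1-t)$ gives $(1-t)^{-n}$, and the right-hand factor $(1-t)^{-B'}$ must be pushed leftwards past $(C)_m^{-1}$, which needs $B'C=CB'$; then $(B')_n(1-t)^{-(B'+nI)}$ is expanded using $(B')_n(B'+nI)_j=(B')_{n+j}=(B')_j(B'+jI)_n$, yielding $\sum_{j\ge 0}(B'+jI)_n(B')_j\frac{t^j}{j!}$ and hence the stated summation.

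The only genuine subtlety, and the step I expect to need the most care, is justifying the interchange of the order of summation that converts the triple series into the iterated sum over $j$ of $\mathcal{H}_2(\cdots)$: one should verify absolute convergence of the triple series for $\vert t\vert<1$ and $(x,y)$ in the domain of convergence of $\mathcal{H}_2$. This follows by bounding each Pochhammer factor in $2$-norm via $\Vert(D)_k\Vert\le(\Vert D\Vert)_k$ (and $\Vert(C)_m^{-1}\Vert$ by a constant times $(\gamma)_m^{-1}$ for a suitable scalar $\gamma$, as in the convergence discussion of Section~3) and comparing with the product of the scalar binomial series $\sum_j\frac{(\Vert D\Vert+\vert k\vert)_j}{j!}\vert t\vert^j$ and the scalar majorant series for $\mathcal{H}_2$. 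Once the rearrangement is legitimate, everything reduces to the bookkeeping already performed for $G_1$, and the commutativity hypotheses $AB=BA$ (second formula) and $B'C=CB'$ (third formula) are used precisely to move the external factor $(1-t)^{\mp B}$, resp. $(1-t)^{-B'}$, into position. Since the proofs are entirely analogous to the $G_1$ case, a brief indication along these lines suffices.
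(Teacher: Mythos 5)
Your proposal is correct and follows essentially the same route as the paper: the paper proves only the $G_1$ case in detail (binomial expansion of the matrix power $(1-t)^{-(A+(l+m)I)}$ plus the Pochhammer shift identity, then re-collection of the double series) and declares the remaining cases, including this one, to be analogous, which is exactly the computation you carry out. Your identification of where $AB=BA$ and $B'C=CB'$ are needed to move the external factor into position, and your remark on justifying the rearrangement of the triple series, are consistent with (and slightly more careful than) the paper's treatment.
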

\begin{theorem}
	Let $A$, $B$ and $C$ be matrices in $\mathbb{C}^{r \times r}$. Then the matrix function $\mathcal{H}_3(A, B; C; x, y)$ satisfies the following infinite summation formulae
	\begin{align}
		&  (1-t)^{-A} \, \mathcal{H}_3 \left(A, B; C; \frac{x}{1-t}, y{(1-t)}\right)\nonumber\\
		&  = \sum_{n = 0}^{\infty}  \frac{(A)_n}{n!} \, \mathcal{H}_3 \left(A+nI, B; C; {x}, y\right) \,   t^n, \quad   \vert t\vert < 1;
		\\[5pt]
		&  (1-t)^{-B} \, \mathcal{H}_3 \left(A, B; C; \frac{x}{1-t}, {y}\right)\nonumber\\
		&  = \sum_{n = 0}^{\infty}  \frac{(B)_n}{n!} \, \mathcal{H}_3 \left(A, B+nI; C; {x}, y\right) \,   t^n, \quad  AB = BA,  \vert t\vert < 1.
	\end{align}
\end{theorem}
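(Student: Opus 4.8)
The plan is to follow the template of the $G_1$ infinite summation theorem, manipulating the defining series
\begin{align*}
\mathcal{H}_3(A, B; C; x, y) = \sum_{l, m \ge 0} (A)_{l-m} \, (B)_{l} \, (C)_{l}^{-1} \, \frac{x^{l} \, y^{m}}{l! \, m!}
\end{align*}
term by term. For the first identity I would replace $x$ by $x/(1-t)$ and $y$ by $y(1-t)$, so that the generic summand picks up the scalar factor $(1-t)^{m-l}$; absorbing this into the left multiplier $(1-t)^{-A}$ turns the prefactor into $(1-t)^{-(A+(l-m)I)}$, standing to the left of $(A)_{l-m}(B)_{l}(C)_{l}^{-1}$. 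Then, for $|t|<1$, I would expand the matrix binomial series $(1-t)^{-(A+(l-m)I)} = \sum_{n \ge 0} \frac{(A+(l-m)I)_{n}}{n!}\,t^{n}$ and apply the Pochhammer rearrangement $(A+(l-m)I)_{n}\,(A)_{l-m} = (A)_{n}\,(A+nI)_{l-m}$. After interchanging the order of the triple sum, the factor $(A)_{n}/n!$ now stands at the far left of each term and therefore factors out of the double sum over $l,m$, whose remainder is exactly $\mathcal{H}_3(A+nI, B; C; x, y)$; this yields the first formula, and no commutativity hypothesis is required.

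For the second identity the steps run in parallel, except that I would substitute only $x \mapsto x/(1-t)$, so the summand acquires the scalar $(1-t)^{-l}$; combined with $(1-t)^{-B}$ this produces $(1-t)^{-(B+lI)} = \sum_{n \ge 0} \frac{(B+lI)_{n}}{n!}\,t^{n}$. Here the hypothesis $AB=BA$ is used essentially: since $(B+lI)_{n}$ is a polynomial in $B$ and $(A)_{l-m}$ a polynomial in $A$, the two commute, so $(B+lI)_{n}$ may be carried past $(A)_{l-m}$ and set next to $(B)_{l}$, where the identity $(B+lI)_{n}\,(B)_{l} = (B)_{n}\,(B+nI)_{l}$ applies; commuting $(B)_{n}$ back to the left past $(A)_{l-m}$ and interchanging summations gives $\sum_{n \ge 0} \frac{(B)_{n}}{n!}\,\mathcal{H}_3(A, B+nI; C; x, y)\,t^{n}$.

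The only analytic point is the interchange of summations, which I would handle exactly as in the $G_1$ case, by absolute convergence of the triple series for $|t|<1$ and $(x,y)$ in a suitably rescaled copy of the domain of convergence of $\mathcal{H}_3$; granting that, all rearrangements are legitimate. The one piece of bookkeeping needing a word is the negative index in $(A)_{l-m}$ when $l<m$: writing $(A)_{j} = \Gamma^{-1}(A)\,\Gamma(A+jI)$ and using the functional equation of the gamma matrix function, one checks once and for all that $(A)_{j+n} = (A)_{j}\,(A+jI)_{n} = (A)_{n}\,(A+nI)_{j}$ holds for every integer $j$, after which the two formulae come out precisely as in the computation for $G_1$. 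I expect this to be the only mild obstacle, and it is entirely formal.
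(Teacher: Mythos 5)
Your proposal is correct and follows exactly the template the paper uses for the $G_1$ infinite summation theorem (the paper omits the proof for $\mathcal{H}_3$, declaring it similar): substitute, absorb the scalar power of $(1-t)$ into the matrix exponent, expand the binomial series, apply $(A+jI)_n(A)_j=(A)_n(A+nI)_j$, and interchange sums, with $AB=BA$ invoked only to carry $(B+lI)_n$ past $(A)_{l-m}$ in the second formula. Your closing remark verifying the Pochhammer identity for negative integer shifts via $(A)_j=\Gamma^{-1}(A)\Gamma(A+jI)$ addresses the one point the paper glosses over.
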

\begin{theorem}
	Let $A$, $B'$ and $C$ be matrices in $\mathbb{C}^{r \times r}$. Then the matrix function $\mathcal{H}_4(A, B'; C; x, y)$ satisfies the following infinite summation formulae
	\begin{align}
		&  (1-t)^{-A} \, \mathcal{H}_4 \left(A, B'; C; \frac{x}{1-t}, y{(1-t)}\right)\nonumber\\
		&  = \sum_{n = 0}^{\infty}  \frac{(A)_n}{n!} \, \mathcal{H}_4 \left(A+nI, B'; C; {x}, y\right) \,   t^n, \quad   \vert t\vert < 1;
		\\[5pt]
		&  (1-t)^{-B'} \, \mathcal{H}_4 \left(A, B'; C; {x},  \frac{y}{1-t}\right)\nonumber\\
		&  = \sum_{n = 0}^{\infty}  \frac{(B')_n}{n!} \, \mathcal{H}_4 \left(A, B'+nI; C; {x}, y\right) \,   t^n, \quad  AB' = B'A,  \vert t\vert < 1.
	\end{align}
\end{theorem}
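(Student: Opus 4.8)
The plan is to follow the template used above for the Horn matrix function $G_1$: dilate the arguments, expand the factor $(1-t)^{-A}$ (respectively $(1-t)^{-B'}$) by the matrix binomial theorem, absorb the dilation into the Pochhammer symbols via a shift identity, and then reorganise the resulting triple series so that the inner double sum is again an $\mathcal{H}_4$ with a shifted first (respectively second) parameter.

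For the first identity I would start from the defining series \eqref{5.6}, replace $x$ by $x/(1-t)$ and $y$ by $y(1-t)$, and use $(1-t)^{-A}(1-t)^{-m}(1-t)^{n}=(1-t)^{-(A+(m-n)I)}$ to obtain
\begin{align}
(1-t)^{-A}\,\mathcal{H}_4\!\left(A,B';C;\tfrac{x}{1-t},\,y(1-t)\right)
=\sum_{m,n\ge 0}(1-t)^{-(A+(m-n)I)}(A)_{m-n}(B')_n(C)_m^{-1}\,\frac{x^m y^n}{m!\,n!}.
\end{align}
For $|t|<1$ I would then expand $(1-t)^{-(A+(m-n)I)}=\sum_{k\ge 0}\frac{(A+(m-n)I)_k}{k!}t^k$, apply the shift identity $(A+(m-n)I)_k(A)_{m-n}=(A)_k(A+kI)_{m-n}$ --- valid because all these factors are functions of the single matrix $A$ and hence commute, cf.\ \eqref{c1eq.010} --- interchange the $k$-summation with the $(m,n)$-summation, and identify $\sum_{m,n\ge 0}(A+kI)_{m-n}(B')_n(C)_m^{-1}\frac{x^m y^n}{m!\,n!}$ as $\mathcal{H}_4(A+kI,B';C;x,y)$. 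Renaming $k$ as $n$ gives the first formula; since $(A)_k$ emerges on the left as a clean prefactor, no commutativity hypothesis is needed here.

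For the second identity I would replace only $y$ by $y/(1-t)$, so that the common factor is $(1-t)^{-(B'+nI)}$; expanding it by the binomial theorem and using $(B'+nI)_k(B')_n=(B')_{n+k}=(B')_k(B'+kI)_n$ produces a summand proportional to $(B')_k(B'+kI)_n(A)_{m-n}(C)_m^{-1}$. To collect this into $(B')_k\,\mathcal{H}_4(A,B'+kI;C;x,y)$ one must commute $(B'+kI)_n$ past $(A)_{m-n}$, and this --- since the two are polynomials in $B'$ and in $A$ respectively --- is exactly the point at which the hypothesis $AB'=B'A$ is used. After the interchange of summations the inner double series becomes $\mathcal{H}_4(A,B'+kI;C;x,y)$, and renaming $k$ as $n$ yields the claim.

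The delicate points are minor. The term-by-term binomial expansion inside the double series has to be justified by the absolute convergence of $\mathcal{H}_4$ on its domain together with that of the binomial series for $|t|<1$, so that the rearrangement of the triple sum is legitimate; and the Pochhammer symbol $(A)_{m-n}$ for $m<n$ should be read through the convention $(A)_{-j}=(-1)^j(I-A)_j^{-1}$ and the reciprocal gamma relations of Section~2, under which the shift identities above remain valid for all integer subscripts. I expect the main thing to get right is the bookkeeping of the non-commuting factors in the second formula --- keeping $(C)_m^{-1}$ on the right throughout while commuting $(B'+kI)_n$ through $(A)_{m-n}$ --- but once $AB'=B'A$ is invoked this is routine.
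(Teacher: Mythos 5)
Your proposal is correct and follows essentially the same route as the paper: the paper proves only the $G_1$ case in detail (dilate the arguments, expand $(1-t)^{-(A+(l+m)I)}$ binomially, apply the Pochhammer shift identity, and re-identify the inner double sum), stating that the remaining summation formulae, including those for $\mathcal{H}_4$, follow similarly. Your tracking of where $AB'=B'A$ is needed in the second formula matches the condition stated in the theorem.
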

\begin{theorem}
	Let $A$ and $C$ be matrices in $\mathbb{C}^{r \times r}$. Then the matrix function $\mathcal{H}_5(A; C; x, y)$ satisfies the following infinite summation formulae
	\begin{align}
		&  (1-t)^{-A} \, \mathcal{H}_5 \left(A; C; \frac{x}{1-t}, y{(1-t)}\right) = \sum_{n = 0}^{\infty}  \frac{(A)_n}{n!} \, \mathcal{H}_5 \left(A+nI; C; {x}, y\right) \,   t^n, \quad   \vert t\vert < 1.
	\end{align}
\end{theorem}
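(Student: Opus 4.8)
The plan is to repeat, in a slightly lighter form, the computation already carried out above for the first infinite summation formula of $G_1$; since $\mathcal{H}_5$ carries no lower Pochhammer parameter besides $A$ itself, this single identity is the only summation formula of this type that it admits.

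First I would substitute the scaled arguments into the series definition \eqref{5.7}. Replacing $x$ by $x/(1-t)$ and $y$ by $y(1-t)$ extracts a scalar factor $(1-t)^{-(m-n)}$ from the generic term, so that
\begin{align}
& (1-t)^{-A}\,\mathcal{H}_5\left(A; C; \frac{x}{1-t}, y(1-t)\right) \nonumber\\
& = \sum_{m,n \ge 0} (1-t)^{-A}\,(1-t)^{-(m-n)I}\,(A)_{m-n}\,(C)_m^{-1}\,\frac{x^m y^n}{m!\,n!}.
\end{align}
Because $(1-t)^{-(m-n)I}$ is a scalar multiple of $I$, while $(1-t)^{-A}$ and $(A)_{m-n}$ are both holomorphic functions of the single matrix $A$, the three factors commute and combine into $(1-t)^{-(A+(m-n)I)}(A)_{m-n}$.

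Next I would insert the expansion $(1-t)^{-(A+(m-n)I)} = \sum_{k \ge 0} \frac{(A+(m-n)I)_k}{k!}\,t^k$, valid for $\lvert t\rvert < 1$, and apply the Pochhammer identity
\begin{align}
(A+(m-n)I)_k\,(A)_{m-n} = (A)_k\,(A+kI)_{m-n},
\end{align}
which holds for every integer $m-n$ (positive, zero, or negative), since both sides equal $(A)_{k+(m-n)}$ — here one uses the standing assumption that $A+\ell I$ is invertible for the relevant negative integers $\ell$, which is exactly what makes $\mathcal{H}_5$ well defined. Interchanging the order of summation then collapses the double sum over $m,n$ to $\mathcal{H}_5(A+kI; C; x, y)$, and renaming the index $k$ as $n$ yields precisely the asserted formula.

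The one step that needs genuine care — and the only real obstacle — is the legitimacy of reordering the triple series. For $\lvert t\rvert < 1$ and $(x,y)$ in the region of absolute convergence of $\mathcal{H}_5$, replacing each matrix factor by the corresponding scalar bound in the $2$-norm (exactly as in the convergence analysis of Section~3) produces a convergent scalar majorant, so Fubini's theorem justifies the rearrangement. Every other step is the routine matrix-functional-calculus bookkeeping already illustrated in the proof for $G_1$.
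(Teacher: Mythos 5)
Your argument is correct and follows essentially the same route as the paper: the paper proves only the $G_1$ case in detail (scale the arguments, absorb the resulting scalar power into $(1-t)^{-(A+(m-n)I)}$, expand in $t$, apply $(A+(m-n)I)_k\,(A)_{m-n}=(A)_k\,(A+kI)_{m-n}$, and resum) and explicitly omits the proofs of the remaining summation formulae as "similar." Your added remarks on the validity of the Pochhammer identity for negative index $m-n$ and on the Fubini justification for reordering are sound and, if anything, more careful than the paper's own treatment.
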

\begin{theorem}
	Let $A$ and $C$ be matrices in $\mathbb{C}^{r \times r}$. Then the matrix function $\mathcal{H}_6(A; C; x, y)$ satisfies the following infinite summation formulae
	\begin{align}
		&  (1-t)^{-A} \, \mathcal{H}_6 \left(A; C; \frac{x}{(1-t)^2}, \frac{y}{1-t}\right) = \sum_{n = 0}^{\infty}  \frac{(A)_n}{n!} \, \mathcal{H}_6 \left(A+nI; C; {x}, y\right) \,   t^n, \quad   \vert t\vert < 1.
	\end{align}
\end{theorem}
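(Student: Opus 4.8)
The plan is to follow exactly the strategy used for the summation formula of $G_1$. Write $U=\mathcal{H}_6$ and start from its series definition \eqref{5.8}. Substituting the scaled arguments $x/(1-t)^2$ and $y/(1-t)$, the factor $x^m y^n$ produces the scalar powers $(1-t)^{-2m}(1-t)^{-n}$; since scalar powers of $1-t$ commute with every matrix and $(1-t)^{-A}(1-t)^{-kI}=(1-t)^{-(A+kI)}$ by the holomorphic functional calculus, pulling these through the leading $(1-t)^{-A}$ gives
\begin{align*}
(1-t)^{-A}\,\mathcal{H}_6\!\left(A; C; \tfrac{x}{(1-t)^2}, \tfrac{y}{1-t}\right)
= \sum_{m,n\ge 0} (1-t)^{-(A+(2m+n)I)}\,(A)_{2m+n}\,(C)_{m+n}^{-1}\,\frac{x^m\,y^n}{m!\,n!}.
\end{align*}

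Next, for $|t|<1$ I would expand the matrix binomial $(1-t)^{-(A+(2m+n)I)}=\sum_{k\ge 0}\frac{(A+(2m+n)I)_k}{k!}\,t^k$ and apply the Pochhammer shift identity $(A+(2m+n)I)_k\,(A)_{2m+n}=(A)_k\,(A+kI)_{2m+n}$ — the same identity already invoked in the $G_1$ computation, valid here because all three factors are polynomials in $A$. This recasts the right-hand side as
\begin{align*}
\sum_{m,n,k\ge 0} \frac{(A)_k}{k!}\,(A+kI)_{2m+n}\,(C)_{m+n}^{-1}\,\frac{x^m\,y^n}{m!\,n!}\,t^k
= \sum_{k\ge 0} \frac{(A)_k}{k!}\,\mathcal{H}_6(A+kI; C; x, y)\,t^k,
\end{align*}
where in the last step the inner double sum over $m,n$ is recognised as $\mathcal{H}_6$ with first parameter shifted to $A+kI$. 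This is precisely the asserted identity.

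The only step requiring care — as in every one of the preceding summation theorems — is the rearrangement of the triple series. This is legitimate because, for $|t|<1$ and $(x,y)$ in the region where $\mathcal{H}_6$ converges absolutely, the estimate $\Vert(A+(2m+n)I)_k\Vert\le(\Vert A\Vert+2m+n)_k$ together with the norm bound \eqref{c1eq1.1} shows the triple series is dominated term by term by a convergent scalar hypergeometric series of the same type (with $x,y,t$ replaced by their moduli and $A$ by $\Vert A\Vert I$); Tonelli's theorem then permits the interchange of the summations in either order. Since this bookkeeping is identical to that in the $G_1$ case and in the other confluent analogues, it may simply be remarked upon rather than carried out in detail, and no genuinely new difficulty arises in the confluent setting.
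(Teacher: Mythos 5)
Your proposal is correct and follows exactly the template the paper uses for the $G_1$ summation formula (binomial expansion of $(1-t)^{-(A+(2m+n)I)}$, the shift identity $(A+(2m+n)I)_k\,(A)_{2m+n}=(A)_k\,(A+kI)_{2m+n}$, and regrouping), which is precisely the argument the paper omits for $\mathcal{H}_6$ with the remark that it is similar. Your added justification of the series rearrangement is a welcome refinement but does not change the route.
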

\begin{theorem}
	Let $A$, $C$ and $C'$ be matrices in $\mathbb{C}^{r \times r}$. Then the matrix function $\mathcal{H}_7(A; C, C'; x, y)$ satisfies the following infinite summation formulae
	\begin{align}
		&  (1-t)^{-A} \, \mathcal{H}_7\left(A; C, C'; \frac{x}{(1-t)^2}, \frac{y}{1-t}\right)\nonumber\\
		&  = \sum_{n = 0}^{\infty}  \frac{(A)_n}{n!} \, \mathcal{H}_7\left(A+nI; C, C'; {x}, y\right) \,   t^n, \quad   \vert t\vert < 1.
	\end{align}
\end{theorem}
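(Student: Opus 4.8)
The plan is to follow the same scheme that proves the $G_1$ summation formula: start from the series definition \eqref{5.9} of $\mathcal{H}_7$, absorb the rescaling of the arguments together with the prefactor $(1-t)^{-A}$ into a single binomial series in $t$, and then read off the coefficient of $t^n$ as a parameter-shifted $\mathcal{H}_7$. Note that, unlike several entries of Table~\ref{t1}, no commutativity hypothesis is needed here, since $C$ and $C'$ enter only through $(C)_m^{-1}$ and $(C')_n^{-1}$, which never have to be moved past $A$ or past each other in this computation.

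First I would substitute $x/(1-t)^2$ for $x$ and $y/(1-t)$ for $y$ in \eqref{5.9}. Using $\bigl(x/(1-t)^2\bigr)^m\bigl(y/(1-t)\bigr)^n=x^m y^n(1-t)^{-(2m+n)}$ and the fact that this scalar power commutes with $(1-t)^{-A}$, I obtain
\begin{align}
(1-t)^{-A}\,\mathcal{H}_7\!\left(A;C,C';\frac{x}{(1-t)^2},\frac{y}{1-t}\right)
&=\sum_{m,n\ge 0}(1-t)^{-(A+(2m+n)I)}\,(A)_{2m+n}\,(C)_m^{-1}\,(C')_n^{-1}\,\frac{x^m y^n}{m!\,n!}.
\end{align}
Next, for $|t|<1$ I would expand $(1-t)^{-(A+(2m+n)I)}=\sum_{k\ge 0}\frac{(A+(2m+n)I)_k}{k!}\,t^k$ and apply the matrix Pochhammer identity $(A+(2m+n)I)_k\,(A)_{2m+n}=(A)_k\,(A+kI)_{2m+n}$, which is a consequence of \eqref{c1eq.09} and the commutativity of polynomials in the single matrix $A$; this detaches a clean factor $(A)_k$ on the left.

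Collecting the terms according to the power of $t$, the triple series becomes
\begin{align}
\sum_{k\ge 0}\frac{(A)_k}{k!}\left(\sum_{m,n\ge 0}(A+kI)_{2m+n}\,(C)_m^{-1}\,(C')_n^{-1}\,\frac{x^m y^n}{m!\,n!}\right)t^k
&=\sum_{k\ge 0}\frac{(A)_k}{k!}\,\mathcal{H}_7(A+kI;C,C';x,y)\,t^k,
\end{align}
and relabelling the summation index $k$ as $n$ gives the stated identity.

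The step I expect to be the only real obstacle is the interchange of the $k$-sum with the $(m,n)$-sum. I would justify it by passing to $2$-norms: $\|(A)_{2m+n}\|$, $\|(C)_m^{-1}\|$, $\|(C')_n^{-1}\|$ and $\|(A+(2m+n)I)_k\|$ are each majorized by the corresponding scalar Pochhammer and binomial coefficients, so Fubini's theorem applies on the set of $(x,y,t)$ for which $|x|/(1-|t|)^2$ and $|y|/(1-|t|)$ lie in the region of absolute convergence of $\mathcal{H}_7$ (the same region as in the scalar case). The remainder is the routine bookkeeping already carried out for $G_1$, so I would merely indicate it and omit the estimates.
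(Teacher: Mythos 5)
Your proposal is correct and follows essentially the same scheme as the paper's proof of the corresponding $G_1$ formula (binomial expansion of $(1-t)^{-(A+(2m+n)I)}$ followed by the Pochhammer splitting $(A+(2m+n)I)_k\,(A)_{2m+n}=(A)_k\,(A+kI)_{2m+n}$); the paper omits the $\mathcal{H}_7$ case as "similar," and your version supplies exactly that argument. Your added remarks on why no commutativity hypotheses are needed and on justifying the interchange of summations are sound refinements of, not departures from, the paper's approach.
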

\begin{theorem}
	Let $A$ and $B$ be matrices in $\mathbb{C}^{r \times r}$. Then the matrix function $\mathcal{H}_8(A, B; x, y)$ satisfies the following infinite summation formulae
	\begin{align}
		&  (1-t)^{-A} \, \mathcal{H}_8 \left(A, B; \frac{x}{(1-t)^2}, y{(1-t)}\right) = \sum_{n = 0}^{\infty}  \frac{(A)_n}{n!} \, \mathcal{H}_8 \left(A+nI, B; {x}, y\right) \,   t^n, \quad   \vert t\vert < 1;
		\\[5pt]
		&   \mathcal{H}_8 \left(A, B; {x}{(1-t)}, \frac{y}{1-t}\right) (1-t)^{-B}  = \sum_{n = 0}^{\infty}   \mathcal{H}_8 \left(A, B+nI; {x}, y\right) \, \frac{(B)_n}{n!} \,  t^n, \quad   \vert t\vert < 1.
		\end{align}.
\end{theorem}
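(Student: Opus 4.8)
The plan is to follow exactly the mechanism used for the infinite summation formula of $G_1$: expand the left side via the defining series of $\mathcal{H}_8$, combine the scalar power of $(1-t)$ coming from the rescaled arguments with the matrix power $(1-t)^{-A}$ (resp. $(1-t)^{-B}$), expand that matrix power as a Pochhammer series in $t$, apply a Pochhammer shift identity, interchange the order of summation, and recognise the inner double sum as a shifted $\mathcal{H}_8$.

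For the first identity, substitute $x \mapsto x/(1-t)^2$, $y \mapsto y(1-t)$ in $\mathcal{H}_8(A,B;x,y) = \sum_{m,n\ge 0}(A)_{2m-n}(B)_{n-m}\frac{x^m y^n}{m!\,n!}$. The monomial $\bigl(x/(1-t)^2\bigr)^m\bigl(y(1-t)\bigr)^n$ contributes the scalar $x^m y^n (1-t)^{n-2m}$, and since $(1-t)^{-A}$ and $(A)_{2m-n}$ are holomorphic functions of the single matrix $A$ they commute, so the left factor becomes $(1-t)^{-A}(1-t)^{n-2m} = (1-t)^{-(A+(2m-n)I)}$; note the exponent matches the subscript of $(A)_{2m-n}$, which is precisely what forces the chosen scalings of $x$ and $y$. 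Expanding $(1-t)^{-(A+(2m-n)I)} = \sum_{k\ge 0}\frac{(A+(2m-n)I)_k}{k!}t^k$ for $\lvert t\rvert<1$ and using the Pochhammer identity $(A+(2m-n)I)_k\,(A)_{2m-n} = (A)_k\,(A+kI)_{2m-n}$ (a consequence of $(C)_{a+b}=(C)_a(C+aI)_b$ with $a=2m-n$, $b=k$, the possibly negative index $2m-n$ being read via $(C)_{-j}=(-1)^j(I-C)_j^{-1}$ as in the convergence discussion of Section~3), one pulls $\frac{(A)_k}{k!}$ to the front, interchanges the sums over $(m,n)$ and $k$ (legitimate by absolute convergence for $\lvert t\rvert<1$ inside the region of convergence of $\mathcal{H}_8$), and identifies $\sum_{m,n\ge 0}(A+kI)_{2m-n}(B)_{n-m}\frac{x^m y^n}{m!\,n!} = \mathcal{H}_8(A+kI,B;x,y)$.

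For the second identity one argues symmetrically with the substitution $x\mapsto x(1-t)$, $y\mapsto y/(1-t)$: now the monomial produces the scalar $x^m y^n (1-t)^{m-n}$, and since $(1-t)^{-B}$ stands to the right and commutes with $(B)_{n-m}$, the right factor becomes $(1-t)^{m-n}(1-t)^{-B} = (1-t)^{-(B+(n-m)I)}$, whose exponent matches the subscript of $(B)_{n-m}$. Expanding this power series in $t$ and applying $(B)_{n-m}(B+(n-m)I)_k = (B)_k(B+kI)_{n-m}$, then moving the factor $(B)_k$ to the far right, one obtains $\sum_{k\ge 0}\mathcal{H}_8(A,B+kI;x,y)\frac{(B)_k}{k!}t^k$; here no commutativity between $A$ and $B$ is needed because $(A)_{2m-n}$ remains on the left throughout.

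The only step that requires genuine care is the bookkeeping of the $(1-t)$-powers: one must check that the scalings of $x$ and $y$ are exactly those that make the stray scalar factor $(1-t)^{n-2m}$ (resp. $(1-t)^{m-n}$) combine with $(1-t)^{-A}$ (resp. $(1-t)^{-B}$) into $(1-t)^{-(A+(2m-n)I)}$ (resp. $(1-t)^{-(B+(n-m)I)}$), so that the Pochhammer shift identity can be applied with the right index. Once this is verified, the computation is the same as for $G_1$; the interchange of the triple series and the reading of $(A)_{2m-n}$, $(B)_{n-m}$ for negative indices are handled exactly as in Section~3.
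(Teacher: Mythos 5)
Your proposal is correct and follows exactly the paper's method: the paper proves only the $G_1$ case in detail (rescale the arguments so the scalar power of $(1-t)$ merges with the matrix power into $(1-t)^{-(A+(2m-n)I)}$, expand in $t$, apply $(A+(2m-n)I)_k\,(A)_{2m-n}=(A)_k\,(A+kI)_{2m-n}$, and interchange sums) and omits the $\mathcal{H}_8$ proof as analogous, which is precisely what you have reconstructed, including the correct handling of negative Pochhammer indices and the observation that no commutativity of $A$ and $B$ is needed.
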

\begin{theorem}
	Let $A$, $B$ and $C$ be matrices in $\mathbb{C}^{r \times r}$. Then the matrix function $\mathcal{H}_9(A, B; C; x, y)$ satisfies the following infinite summation formulae
	\begin{align}
		&  (1-t)^{-A} \, \mathcal{H}_9 \left(A, B; C; \frac{x}{(1-t)^2}, y{(1-t)}\right)\nonumber\\
		&  = \sum_{n = 0}^{\infty}  \frac{(A)_n}{n!} \, \mathcal{H}_9 \left(A+nI, B; C; {x}, y\right) \,   t^n, \quad   \vert t\vert < 1;
		\\[5pt]
		&  (1-t)^{-B} \, \mathcal{H}_9 \left(A, B; C; {x}, \frac{y}{1-t}\right)\nonumber\\
		&  = \sum_{n = 0}^{\infty}  \frac{(B)_n}{n!} \, \mathcal{H}_9 \left(A, B+nI; C; {x}, y\right) \,   t^n, \quad  AB = BA,  \vert t\vert < 1.
	\end{align}
\end{theorem}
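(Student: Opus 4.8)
The plan is to follow the template used for the $G_1$ infinite summation formulae: expand $\mathcal{H}_9$ by its defining series, absorb into the Pochhammer symbol of $A$ (resp.\ $B$) the powers of $1-t$ produced by the rescaled arguments, expand the resulting matrix power $(1-t)^{-M}$ by the binomial series $(1-t)^{-M}=\sum_{k\ge 0}\frac{(M)_k}{k!}t^k$ (valid for $|t|<1$), apply the shifted Pochhammer identity, and re-sum.

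For the first formula, note that substituting $x\mapsto x/(1-t)^2$, $y\mapsto y(1-t)$ multiplies the $(m,n)$ term of $\mathcal{H}_9$ by the scalar $(1-t)^{-2m}(1-t)^{n}=(1-t)^{n-2m}$, so
\[
(1-t)^{-A}\,\mathcal{H}_9\!\left(A,B;C;\tfrac{x}{(1-t)^2},y(1-t)\right)
=\sum_{m,n\ge 0}(1-t)^{-(A+(2m-n)I)}\,(A)_{2m-n}\,(B)_n\,(C)_m^{-1}\,\frac{x^m y^n}{m!\,n!},
\]
because $(1-t)^{-A}$ commutes with the scalar $(1-t)^{(n-2m)I}$ and with $(A)_{2m-n}$. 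Expanding $(1-t)^{-(A+(2m-n)I)}=\sum_{k\ge 0}\frac{(A+(2m-n)I)_k}{k!}t^k$ and using the matrix identity $(A+\ell I)_k(A)_\ell=(A)_k(A+kI)_\ell$ with $\ell=2m-n$ (the same identity used in the $G_1$ proof, now read for possibly negative $\ell$ via $(A)_{-j}=(-1)^j(I-A)_j^{-1}$), the factor $(A)_k$ moves to the extreme left and is independent of $m,n$; interchanging the order of summation then gives $\sum_{k\ge 0}\frac{(A)_k}{k!}\mathcal{H}_9(A+kI,B;C;x,y)\,t^k$, which is the assertion. No commutativity hypothesis is needed here since $(A)_k$ sits to the left of every other matrix factor.

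For the second formula the substitution $y\mapsto y/(1-t)$ multiplies the $(m,n)$ term by the scalar $(1-t)^{-n}$. Multiplying on the left by $(1-t)^{-B}$ and using $AB=BA$ (so that, by the matrix functional calculus, $(1-t)^{-B}$ commutes with $(A)_{2m-n}$) one brings $(1-t)^{-B}(1-t)^{-nI}=(1-t)^{-(B+nI)}$ next to $(B)_n$; the identity $(B+nI)_k(B)_n=(B)_k(B+kI)_n$, combined once more with $AB=BA$ to pull $(B)_k$ past $(A)_{2m-n}$, yields after re-summation $\sum_{k\ge 0}\frac{(B)_k}{k!}\mathcal{H}_9(A,B+kI;C;x,y)\,t^k$.

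The rearrangements of the triple series are legitimate because for $|t|<1$ and $(x,y)$ in the region of absolute convergence of $\mathcal{H}_9$ (the same region as in the scalar case) all the series involved converge absolutely. The only point requiring care is the bookkeeping of non-commutativity — placing $(1-t)^{-A}$ (resp.\ $(1-t)^{-B}$) where it can legitimately be slid past the remaining matrix factors (automatic in the first case, requiring $AB=BA$ in the second) and invoking the correct form of the shifted Pochhammer identity — so I expect no obstacle beyond this routine ordering argument, the computation otherwise mirroring the $G_1$ case.
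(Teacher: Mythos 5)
Your proposal is correct and follows essentially the same route as the paper, which proves only the $G_1$ case (expand $(1-t)^{-(A+\ell I)}$ binomially, apply $(A+\ell I)_k (A)_\ell = (A)_k(A+kI)_\ell$, and re-sum) and dismisses the remaining cases, including $\mathcal{H}_9$, as similar. Your explicit attention to the negative-index Pochhammer symbols and to where $AB=BA$ is actually invoked is a welcome refinement of the ordering bookkeeping the paper leaves implicit.
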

\begin{theorem}
	Let $A$ and $C$ be matrices in $\mathbb{C}^{r \times r}$. Then the matrix function $\mathcal{H}_{10}(A; C; x, y)$ satisfies the following infinite summation formulae
	\begin{align}
		&  (1-t)^{-A} \, \mathcal{H}_{10} \left(A; C; \frac{x}{(1-t)^2}, y{(1-t)}\right) = \sum_{n = 0}^{\infty}  \frac{(A)_n}{n!} \, \mathcal{H}_{10} \left(A+nI; C; {x}, y\right) \,   t^n, \quad   \vert t\vert < 1.
	\end{align}
\end{theorem}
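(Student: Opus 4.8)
The plan is to mimic the proof already given for the infinite summation formula of $G_1$, working directly from the series definition $\mathcal{H}_{10}(A; C; x, y) = \sum_{m,n \ge 0} (A)_{2m-n} (C)_m^{-1} \frac{x^m y^n}{m!\, n!}$. First I would insert the scaled arguments: replacing $x$ by $x/(1-t)^2$ and $y$ by $y(1-t)$ turns the monomial $x^m y^n$ into $x^m y^n (1-t)^{n-2m}$, so that
\begin{align}
(1-t)^{-A}\, \mathcal{H}_{10}\!\left(A; C; \tfrac{x}{(1-t)^2},\, y(1-t)\right) = \sum_{m,n\ge 0} (1-t)^{-A}(1-t)^{n-2m}\, (A)_{2m-n}\, (C)_m^{-1}\, \frac{x^m y^n}{m!\, n!}.
\end{align}
Because $(1-t)^{n-2m}$ is a scalar it commutes with $(1-t)^{-A}$, and the two factors combine into $(1-t)^{-(A+(2m-n)I)}$.

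Next I would expand $(1-t)^{-(A+(2m-n)I)} = \sum_{k\ge 0} \frac{(A+(2m-n)I)_k}{k!}\, t^k$, valid for $|t|<1$, and then apply the Pochhammer identity $(A+(2m-n)I)_k\,(A)_{2m-n} = (A)_k\,(A+kI)_{2m-n}$ — the same identity used in the $G_1$ case, which holds because all the factors are polynomials in the single matrix $A$ and hence commute, and which persists for the (possibly negative) integer shift $2m-n$ via $(A)_{-j} = (-1)^j (I-A)_j^{-1}$. This pulls $(A)_k$ to the left and leaves $(A+kI)_{2m-n}(C)_m^{-1}$ inside the sum, so that after interchanging the order of the triple summation the inner double sum over $m,n$ is exactly $\mathcal{H}_{10}(A+kI; C; x, y)$, yielding the claimed identity (with the summation index $k$ renamed $n$).

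The main obstacle is justifying the rearrangement of the triple series, together with the attendant control of the Pochhammer factors carrying the negative index $2m-n$: one must verify that on the common region of convergence (where $|x|$, $|y|$ and $|t|$ are small enough) the majorant series $\sum_{m,n,k} \bigl\| (A+(2m-n)I)_k\bigr\|\, \bigl\|(A)_{2m-n}\bigr\|\, \bigl\|(C)_m^{-1}\bigr\|\, |x|^m |y|^n |t|^k$ converges, so that Fubini's theorem applies; estimating $\|(A+kI)_{2m-n}\|$ when $2m-n<0$ tacitly uses the genericity that $A-jI$ be invertible for the relevant $j$, which is implicit in $\mathcal{H}_{10}$ being well defined. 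Once absolute convergence is secured the manipulation is purely formal and identical in shape to the $G_1$ argument, so I would present it in a line or two and refer back to that proof.
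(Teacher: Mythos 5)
Your proposal follows exactly the template the paper uses for its one worked example (the $G_1$ summation formula): scale the arguments, absorb the scalar power $(1-t)^{n-2m}$ into $(1-t)^{-A}$, expand via $(1-t)^{-(A+(2m-n)I)} = \sum_{k\ge 0}\frac{(A+(2m-n)I)_k}{k!}t^k$, apply the commuting Pochhammer identity, and re-identify the inner double sum; the paper omits the $\mathcal{H}_{10}$ case as ``similar,'' and your version is that similar argument carried out correctly. Your added remarks on justifying the series rearrangement and on the validity of the Pochhammer identity for the negative shift $2m-n$ go slightly beyond what the paper records, but they do not change the route.
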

\begin{theorem}
	Let $A$, $B$, $C$ and $C'$ be matrices in $\mathbb{C}^{r \times r}$. Then the matrix function $\mathcal{H}_{11}(A, B, C; C'; x, y)$ satisfies the following infinite summation formulae
	\begin{align}
		&  (1-t)^{-A} \, \mathcal{H}_{11} \left(A, B, C; C'; \frac{x}{1-t}, y{(1-t)}\right)\nonumber\\
		&  = \sum_{n = 0}^{\infty}  \frac{(A)_n}{n!} \, \mathcal{H}_{11} \left(A+nI, B, C; C'; {x}, y\right) \,   t^n, \quad   \vert t\vert < 1;
		\\[5pt]
		&  (1-t)^{-B} \, \mathcal{H}_{11} \left(A, B, C; C'; {x}, \frac{y}{1-t}\right)\nonumber\\
		&  = \sum_{n = 0}^{\infty}  \frac{(B)_n}{n!} \, \mathcal{H}_{11} \left(A, B+nI, C; C'; {x}, y\right) \,   t^n, \quad  AB = BA,  \vert t\vert < 1
		\\[5pt]
		&   \mathcal{H}_{11} \left(A, B, C; C'; {x}, \frac{y}{1-t}\right)\, (1-t)^{-C} \nonumber\\
		&  = \sum_{n = 0}^{\infty}   \mathcal{H}_{11} \left(A, B, C+nI; C'; {x}, y\right) \,  \frac{(C)_n}{n!} \, t^n, \quad  CC' = C'C,  \vert t\vert < 1.
	\end{align}
\end{theorem}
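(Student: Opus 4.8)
The plan is to follow the template of the infinite summation proof already carried out for $G_1$: expand $\mathcal{H}_{11}$ from its defining series, substitute the dilated arguments, absorb the extra scalar powers of $(1-t)$ into the adjacent matrix factor, apply a matrix binomial expansion, invoke a Pochhammer‑shift identity, and interchange the order of summation so as to recognise the inner double series as $\mathcal{H}_{11}$ with one parameter shifted by $nI$.

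For the first formula, substituting $x\mapsto x/(1-t)$, $y\mapsto y(1-t)$ in \eqref{5.13} and multiplying on the left by $(1-t)^{-A}$ gives
\[
(1-t)^{-A}\,\mathcal{H}_{11}\!\left(A,B,C;C';\frac{x}{1-t},y(1-t)\right)
=\sum_{p,q\ge 0}(1-t)^{-(A+(p-q)I)}(A)_{p-q}(B)_q(C)_q(C')_p^{-1}\frac{x^p\,y^q}{p!\,q!}.
\]
Since $(1-t)^{-(A+(p-q)I)}$ and $(A)_{p-q}$ are both functions of $A$, they commute; using $(1-t)^{-(A+(p-q)I)}=\sum_{n\ge0}\frac{(A+(p-q)I)_n}{n!}t^n$ for $\vert t\vert<1$ together with $(A+(p-q)I)_n(A)_{p-q}=(A)_n(A+nI)_{p-q}$, and interchanging the $n$‑summation with the $(p,q)$‑summation, the right‑hand side collapses to $\sum_{n\ge0}\frac{(A)_n}{n!}\mathcal{H}_{11}(A+nI,B,C;C';x,y)\,t^n$. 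Note that no commutativity hypothesis is needed here, because the extracted factor $(A)_n$ remains on the extreme left, exactly the position of $(A)_{m-n}$ in \eqref{5.13}.

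For the second and third formulas the mechanism is identical, but the inserted matrix factor, $(1-t)^{-B}$ respectively $(1-t)^{-C}$, does not \emph{a priori} commute with the other Pochhammer blocks in the summand. Here one absorbs the scalar $(1-t)^{-q}$ produced by the substitution $y\mapsto y/(1-t)$ by writing $(1-t)^{-B}(1-t)^{-q}=(1-t)^{-(B+qI)}$, respectively $(1-t)^{-q}(C)_q(1-t)^{-C}=(C)_q(1-t)^{-(C+qI)}$, then applies the binomial expansion and the shift $(B+qI)_n(B)_q=(B)_n(B+nI)_q$, respectively $(C)_q(C+qI)_n=(C)_n(C+nI)_q$. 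To move the resulting factor $(B)_n$ (respectively $(C)_n$) into the position demanded by the statement — at the left of $\mathcal{H}_{11}$ in the second formula, at the right in the third — one slides it past $(A)_{p-q}$ using $AB=BA$, respectively past $(C')_p^{-1}$ using $CC'=C'C$; these are precisely the commutativity conditions imposed in the theorem.

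The main point requiring care is the legitimacy of the rearrangements and of the Pochhammer‑shift identities. The interchange of the triple sum is justified by the absolute convergence of $\mathcal{H}_{11}$ on its region of convergence (determined by Horn's technique as in Section~3.1), combined with $\vert t\vert<1$; and the identity $(D+kI)_n(D)_k=(D)_n(D+nI)_k$, whose index $k=p-q$ may be negative, is the image under the matrix functional calculus of the corresponding rational‑function identity in a scalar variable, hence valid whenever the matrices that occur (in particular $(I-A)_j$ with $j=q-p>0$, via $(A)_{-j}=(-1)^j(I-A)_j^{-1}$) are invertible, which is covered by the standing hypotheses. Apart from this, the computation is the same bookkeeping as in the $G_1$ case.
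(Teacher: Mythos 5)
Your proposal is correct and follows essentially the same route as the paper, which proves only the $G_1$ infinite summation formula by the binomial expansion of $(1-t)^{-(A+(l+m)I)}$ together with the shift identity $(A+(l+m)I)_n(A)_{l+m}=(A)_n(A+nI)_{l+m}$ and then declares the remaining cases analogous. Your adaptation to $\mathcal{H}_{11}$, including the observation that the first formula needs no commutativity while the second and third require $AB=BA$ and $CC'=C'C$ respectively to slide $(B)_n$ and $(C)_n$ into position, is exactly the intended bookkeeping.
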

	
	\section{Conclusion}
 In this paper, we studied the Horn functions and its confluent cases with the matrices as parameters. We discuss the regions of convergence and give the system of partial differential equations of bilateral type satisfy by these matrix functions.  We also determine certain integral representation of these matrix functions. In last, we give the differential formulae and infinite summation formulae induced from these matrix functions. These matrix functions will enrich the literature in theory of special functions and are capable  to find the new applications in mathematics as well as in physics.


\begin{thebibliography}{99}
	\bibitem{al} A.\, Altin, B.\,\c{C}ekim, R. Sahin, \textit {On the matrix versions of Appell hypergeometric functions}, Quaest. Math. 37 (2014), 31-38.
	
%
%
%
%
	
	\bibitem{cdss} B.\,\c{C}ekim, R.\,Dwivedi, V.\,Sahai, A.\,Shehata, {Certain Integral Representations, Transformation Formulas and Summation Formulas Related to Humbert Matrix Functions}, Bull. Braz. Math. Soc. (N.S.), 52 (2021), no. 2, 213--239.

\bibitem{ds57} N.\,Dunford, J.\,Schwartz, \textit{Linear Operators}, Part-I, New York: Addison-Wesley, 1957.
	
	\bibitem{RD} R.\,Dwivedi, V.\,Sahai, On the hypergeometric matrix functions of two variables, \textit{Linear Multilinear Algebra}, 66, no.~9, (2018), pp.~1819-1837. 
	
	\bibitem{ds4} R.\,Dwivedi, V.\,Sahai, \textit{On the hypergeometric matrix functions of several variables,} J. Math. Phys. 59 (2018), no. 2, 023505, 15pp. 
	
\bibitem{RD2} R.\,Dwivedi, V.\,Sahai, A note on the Lauricella matrix functions, {arXiv:2105.00438}, (2021).
	
	\bibitem{RD3} R.\,Dwivedi, V.\,Sahai, {A note on the Appell matrix functions}. Quaest. Math. 43 (2020), no. 3, 321--334.
	
	\bibitem{emo} A.\,Erd\'elyi, W.\,Magnus, F.\,Oberhettinger, F.\,G.\,Tricomi, \textit{Higher Transcendental Functions}, Vol. I, McGraw-Hill, New York, London, 1953.	
	
	\bibitem{jh} J.\,Horn, {Hypergeometrische Funktionen zweier Veränderlichen}, (German) Math. Ann., 105 (1931), no. 1, 381--407.
	
	\bibitem{ph} P.\,Humbert, \textit{The confluent hypergeometric functions of two variables.} Proc. Roy. Soc. Edinburgh, 41 (1920-21), 73--96.
	
	\bibitem{jjc98a} L.\,J\'odar, J.C.\,Cort\'es, \textit{Some properties of gamma and beta matrix functions}, Appl. Math. Lett. 11 (1998), no. 1, 89--93.
	
	\bibitem{jjc98b}  L.\,J\'odar, J.C.\,Cort\'es, \textit{On the hypergeometric matrix function}, Proceedings of the VIIIth Symposium on Orthogonal Polynomials and Their Applications (Seville, 1997). J. Comput. Appl. Math. 99 (1998), no. 1-2, 205--217. 
	
	\bibitem{jc00}  L.\,Jodar, J.C.\,Cortes, \textit{Closed form general solution of the hypergeometric matrix differential equation}. Mathematical and computer modelling in engineering sciences. Math. Comput. Modelling 32 (2000), no. 9, 1017--1028.	
	
	\bibitem{kmm1}  E.\,G.\,Kalnins, H.\,L.\,Manocha, W.\,Miller, \textit{The Lie theory of two-variable hypergeometric functions}, Stud. Appl. Math., 62 (1980), no. 2, 143--173.
	
	\bibitem{kmm2}  E.\,G.\,Kalnins, H.\,L.\,Manocha, W.\,Miller, \textit{Transformation and reduction formulas for two-variable hypergeometric functions on the sphere $S_{2}$}, Stud. Appl. Math., 63 (1980), no. 2, 155--167.
	
	\bibitem{kmm3}  E.\,G.\,Kalnins, H.\,L.\,Manocha, W.\,Miller, \textit{Harmonic analysis and expansion formulas for two-variable hypergeometric functions.}, Stud. Appl. Math., 66 (1982), no. 1, 69--89.
	

\bibitem{sk} H. M. Srivastava, P. W. Karlsson, \textit{Multiple Gaussian Hypergeometric Series}, Ellis Horwood Limited, Chichester, 1985.
\end{thebibliography}
\end{document}